\newtheorem{theorem}{Theorem}[section]
\newtheorem{lemma}[theorem]{Lemma}
\newtheorem{definition}[theorem]{Definition}
\newtheorem{remark}[theorem]{Remark}
\newtheorem{proposition}[theorem]{Proposition}
\title[The higher order fractional Calder\'on problem: uniqueness]{The higher order fractional Calder\'on problem for linear local operators: uniqueness}
\keywords{Inverse problems, fractional Calder\'on problem, fractional Schr\"odinger equation, Sobolev multipliers.}
\author{Giovanni Covi}
\address{Department of Mathematics and Statistics, University of Jyv\"askyl\"a, Jyv\"askyl\"a, Finland}
\curraddr{Institut fur Angewandte Mathematik, Ruprecht-Karls-Universit\"at Heidelberg, Im Neuenheimer Feld 205, 69120 Heidelberg, Germany}
\email{giovanni.covi@uni-heidelberg.de}
\author{Keijo M\"onkk\"onen}
\address{Department of Mathematics and Statistics, University of Jyv\"askyl\"a, Jyv\"askyl\"a, Finland}
\email{kematamo@student.jyu.fi}
\author{Jesse Railo}
\address{Seminar for Applied Mathematics, Department of Mathematics, ETH Zurich, Z\"urich, Switzerland}
\curraddr{Department of Pure Mathematics and Mathematical Statistics, University of
Cambridge, Cambridge CB3 0WB, UK}
\email{jr891@cam.ac.uk}
\author{Gunther Uhlmann}
\address{Department  of  Mathematics,  University  of  Washington, Seattle, USA  /  Jockey  Club  Institute  for  Advanced Study, HKUST, Hong Kong}
\email{gunther@math.washington.edu}
\date{\today}
\newcommand{\R}{{\mathbb R}}
\newcommand{\Z}{{\mathbb Z}}
\newcommand{\N}{{\mathbb N}}
\newcommand{\der}{{\mathrm d}}
\newcommand{\id}{\mathrm{Id}}
\newcommand{\schwartz}{\mathscr{S}}
\newcommand{\tempered}{\mathscr{S}^{\prime}}
\newcommand{\fraclaplace}{(-\Delta)^s}
\newcommand{\fourier}{\mathcal{F}}
\newcommand{\ifourier}{\mathcal{F}^{-1}}
\newcommand{\dimens}{n}%dimension
\newcommand{\norm}[1]{\left\lVert #1 \right\rVert}
\newcommand{\abs}[1]{\left\lvert #1 \right\rvert}%absolute value
\newcommand{\aabs}[1]{\left\lVert #1 \right\rVert}%norm
\newcommand{\ip}[2]{\left\langle #1,#2 \right\rangle}%inner product or duality pairing
\DeclareMathOperator{\spt}{spt}%support
\begin{document}

\maketitle
\begin{abstract}
We study an inverse problem for the fractional Schr\"odinger equation (FSE) with a local perturbation by a linear partial differential operator (PDO) of order smaller than the one of the fractional Laplacian. We show that one can uniquely recover the coefficients of the PDO from the exterior Dirichlet-to-Neumann (DN) map associated to the perturbed FSE. This is proved for two classes of coefficients: coefficients which belong to certain spaces of Sobolev multipliers and coefficients which belong to fractional Sobolev spaces with bounded derivatives. Our study generalizes recent results for the zeroth and first order perturbations to higher order perturbations. 
\end{abstract}

\section{Introduction}\label{intro}
Let $s\in\R^+\setminus\Z$, $\Omega\subset\R^\dimens$ a bounded open set where $n\geq 1$, $\Omega_e=\R^\dimens\setminus\overline{\Omega}$ its exterior and $P(x, D)$ a linear partial differential operator (PDO) of order $m\in\N$
\begin{equation}
P(x, D)=\sum_{\abs{\alpha}\leq m}a_{\alpha}(x)D^{\alpha}
\end{equation}
where the coefficients $a_\alpha=a_\alpha(x)$ are functions defined in $\Omega$. We study a nonlocal inverse problem for the perturbed fractional Schr\"odinger equation
\begin{equation}
\label{eq:fractionalpdo}
\begin{cases}
\fraclaplace u+P(x, D)u=0 \ \text{in} \ \Omega \\
u=f \ \text{in} \ \Omega_e
\end{cases}
\end{equation}
where $\fraclaplace$ is a nonlocal pseudo-differential operator $\fraclaplace u=\ifourier(\abs{\cdot}^{2s}\hat{u})$ in contrast to the local operator $P(x, D)$. In the inverse problem, one aims to recover the local operator $P$ from the associated Dirichlet-to-Neumann map.

We always assume that $0$ is not a Dirichlet eigenvalue of the operator $(\fraclaplace+P(x, D))$, i.e.
\begin{equation}
\text{If} \ u\in H^s(\R^\dimens) \ \text{solves} \ (\fraclaplace+P(x, D))u=0 \ \text{in} \ \Omega \ \text{and} \ u|_{\Omega_e}=0, \ \text{then} \ u=0.
\end{equation}
Our data for the inverse problem is the exterior Dirichlet-to-Neumann (DN) map  $\Lambda_P\colon H^s(\Omega_e)\rightarrow (H^s(\Omega_e))^*$ which maps Dirichlet exterior values to a nonlocal version of the Neumann boundary value (see section \ref{sec:preliminaries} and \ref{subsec-WP-singular}). The main question that we study in this article is whether the exterior DN map $\Lambda_P$ determines uniquely the coefficients $a_\alpha$ in $\Omega$. In other words, does $\Lambda_{P_1}=\Lambda_{P_2}$ imply that $a_{1, \alpha}=a_{2, \alpha}$ in $\Omega$ for all $\abs{\alpha}\leq m$? We prove that the answer is positive under certain restrictions on the coefficients $a_\alpha$ and the order of the PDOs.

This gives a positive answer to the uniqueness problem \cite[Question 7.5]{CMR20} posed by the first three authors in a previous work. The precise statement in \cite{CMR20} asks to prove uniqueness for the higher order fractional Calderón problem in the case of a bounded domain with smooth boundary and PDOs with smooth coefficients (up to the boundary). The positive answer to this question follows from theorem \ref{main-theorem-bounded}. The study of the fractional Calderón problem was initiated by Ghosh, Salo and Uhlmann in the work \cite{GSU20} where the uniqueness for the associated inverse problem is proved when $m=0$, $s \in (0,1)$ and $a_0 \in L^\infty(\Omega)$. 

We briefly note that by Peetre's theorem any linear operator $L\colon C_c^{\infty}(\Omega)\rightarrow C_c^\infty(\Omega)$ which does not increase supports, i.e. $\spt(Lf)\subset\spt(f)$ for all $f\in C_c^\infty(\Omega)$, is in fact a differential operator~\cite{MI-refinement-of-peetres-theorem} (see also the original work \cite{PEE-peetres-theorem}). Therefore our results apply to any local operator satisfying such properties and it is enough to study PDOs only. For a more general formulation of Peetre's theorem on the level of vector bundles, see
~\cite{NS-peetre-revisited}.

\subsection{Main results}
We denote by $M(H^{s-|\alpha|}\rightarrow H^{-s})$ the space of all bounded Sobolev multipliers between the Sobolev spaces $H^{s-|\alpha|}(\R^\dimens)$ and $ H^{-s}(\R^\dimens)$. We denote by $M_0(H^{s-|\alpha|}\rightarrow H^{-s}) \subset M(H^{s-|\alpha|}\rightarrow H^{-s})$ the space of bounded Sobolev multipliers that can be approximated by smooth compactly supported functions in the multiplier norm of $M(H^{s-|\alpha|}\rightarrow H^{-s})$. We also write $H^{r, \infty}(\Omega)$ for the local Bessel potential space with bounded derivatives. See section \ref{sec:preliminaries} for more detailed definitions. 

Our first theorem is a generalization of \cite[Theorem 1.1]{RS-fractional-calderon-low-regularity-stability} which considered the case $m=0$ with $s \in (0,1)$. It also generalizes \cite[Theorem 1.5]{CMR20} which considered the higher order case $s \in \R^+ \setminus \Z$ when $m=0$.

\begin{theorem}
\label{main-theorem-singular}
Let $\Omega \subset \mathbb R^n$ be a bounded open set where $n\geq 1$. Let $s \in \mathbb R^+ \setminus \mathbb Z$ and $m\in \mathbb N$ be such that $2s > m$. Let $$P_j = \sum_{|\alpha|\leq m} a_{j,\alpha} D^\alpha,\quad j = 1, 2,$$ be linear PDOs of order $m$ with coefficients $a_{j,\alpha} \in M_0(H^{s-|\alpha|}\rightarrow H^{-s})$. Given any two open sets $W_1, W_2 \subset \Omega_e$, suppose that the exterior DN maps $\Lambda_{P_i}$ for the equations
$((-\Delta)^s + P_j)u = 0$ in $\Omega$ satisfy 
$$ \Lambda_{P_1}f|_{W_2} = \Lambda_{P_2}f|_{W_2}$$
for all $f \in C^\infty_c(W_1)$. Then $P_1|_{\Omega} = P_2|_{\Omega}$.
\end{theorem}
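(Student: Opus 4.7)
The plan is to follow the three-stage Ghosh--Salo--Uhlmann template --- Alessandrini-type integral identity, Runge approximation, and final coefficient recovery --- adapted to higher-order PDO perturbations whose coefficients are only Sobolev multipliers. For the \emph{integral identity}, let $u_j \in H^s(\R^n)$ be the solution of $((-\Delta)^s + P_j)u_j = 0$ in $\Omega$ with exterior data $f_j \in C_c^\infty(\Omega_e)$. The well-posedness of this problem and the symmetry of its associated bilinear form (both to be set up in Section~\ref{subsec-WP-singular} using the multiplier hypothesis and the assumption $2s > m$) give
\[
\langle (\Lambda_{P_1} - \Lambda_{P_2}) f_1, f_2 \rangle \;=\; \langle (P_2 - P_1) u_1, u_2 \rangle_\Omega,
\]
and the hypothesis forces the right-hand side to vanish whenever $f_1 \in C_c^\infty(W_1)$ and $f_2 \in C_c^\infty(W_2)$. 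For the \emph{Runge approximation}, I would show by the standard Hahn--Banach plus adjoint-duality argument that the restrictions $\{ u_j|_\Omega : f_j \in C_c^\infty(W_j)\}$ are dense in $\widetilde H^s(\Omega)$ for $j=1,2$; the key input is the strong UCP of $(-\Delta)^s$, available throughout $s \in \R^+\setminus \Z$. Approximating arbitrary $v_1, v_2 \in C_c^\infty(\Omega)$ by such restrictions and passing to the limit (justified by the boundedness $P_1 - P_2 : \widetilde H^s(\Omega) \to H^{-s}(\R^n)$) yields the key orthogonality
\[
\langle (P_1 - P_2) v_1, v_2 \rangle \;=\; 0 \qquad \text{for all } v_1, v_2 \in C_c^\infty(\Omega).
\]

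For the final \emph{coefficient recovery}, set $b_\alpha := a_{1,\alpha} - a_{2,\alpha}$ and substitute the oscillatory test functions $v_1(x) = \phi(x) e^{i k \cdot x}$, $v_2(x) = \psi(x) e^{-i k \cdot x}$ with $\phi, \psi \in C_c^\infty(\Omega)$ and $k \in \R^n$. The Leibniz rule gives
\[
D^\alpha v_1 \;=\; \sum_{\gamma \leq \alpha} \binom{\alpha}{\gamma} k^{\alpha - \gamma} (D^\gamma \phi)\, e^{i k \cdot x},
\]
so the orthogonality becomes a polynomial identity in $k$ whose every coefficient must vanish. Reindexing $\beta = \alpha - \gamma$, a direct count shows that the coefficient of $k^\beta$ with $|\beta| = m$ is simply $\langle b_\beta, \phi\psi \rangle$, and since the products $\phi\psi$ exhaust $C_c^\infty(\Omega)$ this yields $b_\beta = 0$ in $\Omega$ for all top-order $\beta$. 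A downward induction on $|\beta|$ then eliminates the lower-order coefficients one order at a time, since the already-proved vanishing of $b_\alpha$ for $|\alpha| > d$ kills all higher-order contributions to the $k^\beta$ coefficient with $|\beta| = d$. This produces $b_\alpha = 0$ in $\Omega$ for every $|\alpha| \leq m$, which is the desired conclusion $P_1|_\Omega = P_2|_\Omega$.

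The main obstacle is to make every ``integral'' above rigorous as a distributional duality pairing in the multiplier framework, since $b_\alpha$ need not be a locally integrable function. The assumption $a_{j,\alpha} \in M_0$ rather than merely $M$ is what enables this: the $M_0$-density of $C_c^\infty$ in the multiplier norm is precisely what allows the smooth-test-function dualities in the integral-identity step and the final extraction step to be turned into unambiguous distributional conclusions of the form $b_\alpha = 0$ as a multiplier. The restriction $2s > m$ plays a complementary role, ensuring both that each class $M(H^{s-|\alpha|}\to H^{-s})$ contains the standard coefficients one wishes to treat and that the sesquilinear form of $(-\Delta)^s + P_j$ is coercive, so that the forward problem underlying the first two steps is genuinely well posed.
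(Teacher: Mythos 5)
Your overall architecture (integral identity, Runge approximation, recovery by special test functions) matches the paper's, but there is a genuine error in the first step. You justify the integral identity by ``the symmetry of its associated bilinear form'' and pair two solutions $u_1,u_2$ of the \emph{direct} problems for $P_1$ and $P_2$. For $m\geq 1$ the form $B_P(v,w)=\langle(-\Delta)^{s/2}v,(-\Delta)^{s/2}w\rangle+\sum_{|\alpha|\leq m}\langle a_\alpha,(D^\alpha v)\,w\rangle$ is \emph{not} symmetric (the derivatives fall on the first argument only), so the identity $\langle(\Lambda_{P_1}-\Lambda_{P_2})f_1,f_2\rangle=\langle(P_2-P_1)u_1,u_2\rangle_\Omega$ fails as stated: $\langle\Lambda_{P_2}f_1,f_2\rangle$ equals $B_{P_2}(u^{(2)}_{f_1},e_{f_2})$ for the $P_2$-solution with data $f_1$, and this does not telescope against $B_{P_1}(u_1,e_{f_2})$. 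The repair is to introduce the formal adjoint problem $(-\Delta)^su^*+\sum_{|\alpha|\leq m}(-1)^{|\alpha|}D^\alpha(a_{2,\alpha}u^*)=0$ with exterior data $f_2$, its solution $u_2^*$ and the adjoint DN map, and to use $\langle\Lambda_{P_2}f_1,f_2\rangle=\langle f_1,\Lambda_{P_2}^*f_2\rangle=B_{P_2}(u_1,u_2^*)$; the resulting identity pairs $u_1$ with $u_2^*$. This propagates: the Runge step must also give density of the \emph{adjoint} solution set in $\widetilde H^s(\Omega)$, which needs well-posedness and UCP for the adjoint equation (the paper sets up both in parallel). A secondary imprecision: for $2s>m$ the form is only coercive modulo an $L^2$-term (a G\aa rding inequality), so well-posedness requires the Fredholm alternative plus the standing assumption that $0$ is not a Dirichlet eigenvalue, not plain coercivity.

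Once the corrected orthogonality $\sum_{|\alpha|\leq m}\langle a_{1,\alpha}-a_{2,\alpha},(D^\alpha v_1)\,v_2\rangle=0$ for all $v_1,v_2\in C^\infty_c(\Omega)$ is in hand, your recovery step is sound and genuinely different from the paper's. You take $v_1=\phi e^{ik\cdot x}$, $v_2=\psi e^{-ik\cdot x}$, read off the coefficients of the resulting polynomial in $k$, and descend from $|\beta|=m$ downward. The paper instead takes $v_1=x^\alpha$ on $\spt(v_2)$ and ascends: every $D^\beta x^\alpha$ with $|\beta|>|\alpha|$, or with $|\beta|=|\alpha|$ and $\beta\neq\alpha$, vanishes identically, lower orders are removed by the induction hypothesis, and what survives is $\alpha!\,\langle a_{1,\alpha}-a_{2,\alpha},v_2\rangle=0$. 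Both arguments are valid; the paper's avoids complex test functions and the Leibniz expansion in $k$ (note that your identity is bilinear over $\R$, so you should either split into real and imaginary parts or observe that it extends to complex-valued test functions by bilinearity), while your version is closer to the classical symbol-recovery argument and does not rely on the special algebraic fact that monomials are annihilated by higher derivatives.
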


In theorem \ref{main-theorem-singular} one can pick the lower order coefficients ($\abs{\alpha}< s$) from $L^p(\Omega)$ for high enough~$p$ (especially from $L^\infty(\Omega)$) and higher order coefficients ($s<\abs{\alpha}<2s$) from the closure of $C^\infty_c(\Omega)$ in $H^{r, \infty}(\Omega)$ for certain values of $r\in\R$. In the following propositions, which are proved in Section 2, we give more examples of Sobolev spaces which belong to the space of multipliers $M_0(H
^{s-\abs{\alpha}}\rightarrow H^{-s})$:
\begin{proposition}
\label{lemma:generalcoefficients}
Let $\Omega\subset\R^\dimens$ be an open set and let $t\in\R$ and $r\in\R$ be such that $t>\max\{0, r\}$. The following inclusions hold:
\begin{enumerate}[(i)]
    \item\label{item1} $\widetilde{H}^{r^\prime, \infty}(\Omega)\subset M_0(H^{-r}\rightarrow H^{-t})$ whenever $r^\prime\geq\max\{0, r\}$.
    \medskip
    \item\label{item2} $H^{r', \infty}_0(\Omega)\subset M_0(H^{-r}\rightarrow H^{-t})$ whenever $r'\geq\max\{0, r\}$ with $r'\notin\{\frac{1}{2}, \frac{3}{2}, \frac{5}{2}, \dotso\}$ and $\Omega$ is a Lipschitz domain.
    \medskip
    \item\label{item3} $\widetilde{H}^{r'}(\Omega)\subset M_0(H^{-r}\rightarrow H^{-t})$ whenever $r'\geq t$ and $r'>n/2$. The same holds true for $H^{r'}_{\overline{\Omega}}(\R^\dimens)$ if $\Omega$ is a Lipschitz domain, and for $H^{r'}_0(\Omega)$ when $\Omega$ is a Lipschitz domain and $r'\notin\{\frac{1}{2}, \frac{3}{2}, \frac{5}{2}, \dotso\}$.
    \medskip
\end{enumerate}
\end{proposition}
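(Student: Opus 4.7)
The unifying strategy is the duality identity $M(H^{-a}\to H^{-b}) = M(H^b\to H^a)$ with equal operator norms, which holds because multiplication by a fixed distribution is self-adjoint with respect to the $L^2$-pairing. Hence for each of the three inclusions it suffices to establish the dual multiplication bound $\|fg\|_{H^r(\R^\dimens)}\lesssim \|g\|_{H^t(\R^\dimens)}$ with constant controlled by the ambient norm of $f$, and then to deduce $M_0$-membership by approximating $f$ with $C^\infty_c(\Omega)$-functions in that same ambient norm.

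For (i) the plan is to prove $\|fg\|_{H^r}\lesssim \|f\|_{H^{r',\infty}}\|g\|_{H^t}$ whenever $r'\geq\max\{0,r\}$ and $t>\max\{0,r\}$, via a dichotomy on the sign of $r$. If $r\leq 0$ one uses the elementary bound $\|fg\|_{H^r}\leq \|fg\|_{L^2}\leq \|f\|_{L^\infty}\|g\|_{L^2}$, justified by the embeddings $H^{r',\infty}\hookrightarrow L^\infty$ and $H^t\hookrightarrow L^2$. If $0<r\leq r'$ one invokes the Kato--Ponce fractional Leibniz inequality
$$\|fg\|_{H^r}\lesssim \|f\|_{H^{r,\infty}}\|g\|_{L^2}+\|f\|_{L^\infty}\|g\|_{H^r},$$
and controls both terms by using $r'\geq r$ (which gives $\|f\|_{H^{r,\infty}}\leq \|f\|_{H^{r',\infty}}$) and $t>r\geq 0$ (which gives $\|g\|_{L^2}+\|g\|_{H^r}\lesssim \|g\|_{H^t}$). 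Duality then yields $f\in M(H^{-r}\to H^{-t})$ with operator norm dominated by the $H^{r',\infty}$-norm of $f$, and the $M_0$-conclusion is automatic because, by definition, $\widetilde H^{r',\infty}(\Omega)$ is the $H^{r',\infty}$-closure of $C^\infty_c(\Omega)$.

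Item (ii) then follows from (i) via the standard identification $H^{r',\infty}_0(\Omega) \cong \widetilde H^{r',\infty}(\Omega)$, available on Lipschitz domains when $r'$ avoids the half-integers, so every $f\in H^{r',\infty}_0(\Omega)$ extends by zero to $H^{r',\infty}(\R^\dimens)$ with controlled norm. For (iii) I would rely on the classical fact that $H^{r'}(\R^\dimens)$ is a Banach algebra when $r'>n/2$ and acts boundedly by pointwise multiplication on $H^s(\R^\dimens)$ for every $s$ with $|s|\leq r'$. Combined with the same sign dichotomy on $r$ (using $r\leq t\leq r'$ and, when $r<0$, the chain $H^t\hookrightarrow L^2\hookrightarrow H^r$), this yields $\|fg\|_{H^r}\lesssim \|f\|_{H^{r'}}\|g\|_{H^t}$, whence $f\in M(H^{-r}\to H^{-t})$ by duality and $f\in M_0$ by density of $C^\infty_c(\Omega)$ in $\widetilde H^{r'}(\Omega)$. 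The variants $H^{r'}_{\overline\Omega}(\R^\dimens)$ and $H^{r'}_0(\Omega)$ reduce to $\widetilde H^{r'}(\Omega)$ through the same Lipschitz / non-half-integer identifications.

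The main obstacle I anticipate is locating the fractional Leibniz inequality in exactly the $H^{r',\infty}$-form used in (i); if not available off the shelf, it must either be rebuilt via a Littlewood--Paley decomposition or extracted from the Maz'ya--Shaposhnikova theory of Sobolev multipliers, where sharp conditions for membership in $M(H^{a}\to H^{b})$ in terms of Bessel-potential spaces are catalogued. The half-integer exclusions play no role in the multiplication estimates themselves; they appear only to secure the identifications of the various vanishing-at-the-boundary spaces with their zero-extension counterparts in (ii) and (iii).
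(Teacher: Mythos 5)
Your treatment of parts (\ref{item1}) and (\ref{item3}) is correct and essentially the paper's own argument: reduce to a multiplication bound via the duality $M(H^{-r}\to H^{-t})=M(H^{t}\to H^{r})$ (lemma \ref{lemma-properties-of-multipliers}~(\ref{multiplier-lemma-item1})), prove that bound by Kato--Ponce in case (\ref{item1}) and by the multiplication theorem $H^{r'}\times H^{t}\hookrightarrow H^{t}$ for $r'\geq t$, $r'>n/2$ in case (\ref{item3}), and then pass to $M_0$ by approximating $f$ in the ambient norm by $C^\infty_c(\Omega)$ functions. Your sign dichotomy on $r$ plays the same role as the paper's reduction to $\max\{0,r\}\leq r'<t$; both are just monotonicity of the Sobolev scales.

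Part (\ref{item2}), however, has a genuine gap. You derive it from (\ref{item1}) via the identification $H^{r',\infty}_0(\Omega)\cong\widetilde{H}^{r',\infty}(\Omega)$, i.e.\ the claim that every element of $H^{r',\infty}_0(\Omega)$ extends by zero to $H^{r',\infty}(\R^\dimens)$ with controlled norm. The theorems you are implicitly invoking (McLean's Theorems 3.29/3.33, Triebel's results) are stated for $L^2$-based spaces, respectively for $1<p<\infty$ on smooth domains; there is no off-the-shelf version for the endpoint $p=\infty$ Bessel potential scale, where $C^\infty_c$ is not even dense in $H^{r',\infty}(\R^\dimens)$ and duality/interpolation arguments degenerate. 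Even the shape of the excluded set is a tell: the half-integers $r'\in\{1/2,3/2,\dots\}$ are the critical values $r'-1/p\in\Z_{\geq 0}$ for $p=2$, not for $p=\infty$. The paper avoids this entirely: for $f\in H^{r',\infty}_0(\Omega)$ it writes $f=f_1+f_2$ with $\aabs{f_2}_{H^{r',\infty}(\Omega)}\leq\epsilon$ in the \emph{quotient} norm, picks an arbitrary (not zero) extension $F$ of $f_2$ with $\aabs{F}_{H^{r',\infty}(\R^\dimens)}\leq 2\aabs{f_2}_{H^{r',\infty}(\Omega)}$, applies Kato--Ponce to $F$, and then passes back to $\Omega$ using the $L^2$-based duality $(H^{-r'}(\Omega))^*=H^{r'}_0(\Omega)\subset H^{r'}(\Omega)$. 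It is in this last duality step --- for the Hilbertian spaces $H^{r'}$, not for $H^{r',\infty}$ --- that the Lipschitz and non-half-integer hypotheses are actually used. So your estimate can be repaired, but not along the route you describe: you must replace the zero-extension claim by the quotient-norm extension plus the $L^2$-scale duality.
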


Note that the assumptions in theorem \ref{main-theorem-singular} satisfy the conditions of proposition~\ref{lemma:generalcoefficients} since then $r=\abs{\alpha}-s$ and $t=s$. The next proposition gives examples of spaces of lower order coefficients ($\abs{\alpha}\leq s$):

\begin{proposition}
\label{lemma:lowerordercoefficients}
Let $\Omega\subset\R^\dimens$ be an open set and $t>0$. The following inclusions hold:
\begin{enumerate}[(i)]
 \item\label{item4} $L^p(\Omega)\subset M_0(H^0\rightarrow H^{-t})$ whenever $2\leq p<\infty$ and $p>n/t$. Especially, if $\Omega$ is bounded, then $L^\infty (\Omega)\subset M_0(H^0\rightarrow H^{-t})$.
    \medskip
    \item\label{item5} $\widetilde{H}^r(\Omega)\subset M_0(H^0\rightarrow H^{-t})$ whenever $r\geq 0$ and $r>n/2-t$. The same holds true for $H^r_{\overline{\Omega}}(\R^\dimens)$ if $\Omega$ is a Lipschitz domain, and for $H^r_0(\Omega)$ when $\Omega$ is Lipschitz domain and $r\notin\{\frac{1}{2}, \frac{3}{2}, \frac{5}{2}, \dotso\}$.
    \end{enumerate}
\end{proposition}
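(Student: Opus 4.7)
The overall strategy in both parts is to first establish a continuous embedding of the ambient function space into the multiplier space $M(H^0\to H^{-t})$, and then promote this to an embedding into $M_0(H^0\to H^{-t})$ by approximating $f$ in the ambient norm via functions in $C_c^\infty(\R^\dimens)$. The multiplier bound itself will be obtained by duality: for $u\in L^2(\R^\dimens)$,
\begin{equation}
\|fu\|_{H^{-t}(\R^\dimens)}=\sup_{\|\phi\|_{H^t(\R^\dimens)}\leq 1}\left|\int_{\R^\dimens} f\,u\,\phi\,dx\right|,
\end{equation}
so it will suffice in each case to control $|\int fu\phi\,dx|$ by $\|u\|_{L^2}\|\phi\|_{H^t}$ times the relevant norm of $f$.

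For part \eqref{item4}, I would apply Hölder's inequality with exponents $(p,2,q)$ satisfying $1/q=1/2-1/p$. The hypothesis $p>n/t$ (together with $p\geq 2$) translates into $1/q>1/2-t/n$, placing us in the strict regime of the Sobolev embedding $H^t(\R^\dimens)\hookrightarrow L^q(\R^\dimens)$; this handles the subcritical case $t<n/2$ uniformly with the borderline case $t=n/2$ (any $q<\infty$) and the supercritical case $t>n/2$ (where $H^t\hookrightarrow L^\infty$). The resulting bound $\|f\|_{M(H^0\to H^{-t})}\lesssim \|f\|_{L^p}$, combined with the density of $C_c^\infty(\R^\dimens)$ in $L^p(\R^\dimens)$ for finite $p$, then yields the $M_0$ inclusion. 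The $L^\infty(\Omega)$ statement for bounded $\Omega$ follows by selecting any finite $p>\max\{2,n/t\}$, since $L^\infty(\Omega)\subset L^p(\Omega)$ when $\Omega$ is bounded.

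For part \eqref{item5}, the central ingredient is the bilinear Sobolev estimate
\begin{equation}
\|f\phi\|_{L^2(\R^\dimens)}\lesssim \|f\|_{H^r(\R^\dimens)}\|\phi\|_{H^t(\R^\dimens)},\qquad r,t\geq 0,\ r+t>n/2,
\end{equation}
which I would derive by pairing the Sobolev embeddings $H^r\hookrightarrow L^{q_1}$ and $H^t\hookrightarrow L^{q_2}$ with $1/q_1+1/q_2=1/2$ in the subcritical regime, and by using the $L^\infty$ embedding when $r>n/2$ or $t>n/2$. Substituting into the duality identity proves $\|f\|_{M(H^0\to H^{-t})}\lesssim \|f\|_{H^r}$. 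The density step for $\widetilde{H}^r(\Omega)$ is then immediate from its definition as the closure of $C_c^\infty(\Omega)$ in $H^r(\R^\dimens)$; for the other two spaces I would invoke the standard identifications on Lipschitz domains, namely $\widetilde{H}^r(\Omega)=H^r_{\overline{\Omega}}(\R^\dimens)$ when $r\geq 0$, and $H^r_0(\Omega)=\widetilde{H}^r(\Omega)$ when $r\notin\{1/2,3/2,\dots\}$.

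The main obstacle is not any deep ingredient but rather careful bookkeeping near critical exponents: ensuring that the strict hypotheses $p>n/t$ and $r>n/2-t$ really place us in the interior of the Sobolev embedding range (so that the subcritical, critical and $L^\infty$ regimes are all covered uniformly), and correctly invoking the half-integer exception in the density results for $H^r_0$ and $H^r_{\overline{\Omega}}$ on Lipschitz domains.
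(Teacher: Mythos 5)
Your argument is correct and takes essentially the same route as the paper: the same reduction of the multiplier norm to the bilinear bounds $\|fv\|_{L^2}\lesssim\|f\|_{L^p}\|v\|_{H^t}$ (part (i)) and $\|fv\|_{L^2}\lesssim\|f\|_{H^r}\|v\|_{H^t}$ (part (ii)), the same approximation of $f$ by $C_c^\infty$ functions in the ambient norm to pass from $M$ to $M_0$, and the same Lipschitz-domain identifications for $H^r_{\overline{\Omega}}(\R^\dimens)$ and $H^r_0(\Omega)$. The only difference is cosmetic: the paper imports the two multiplication estimates from a cited multiplication theorem, whereas you rederive them from H\"older's inequality and the Sobolev embedding, which is precisely how that theorem is proved.
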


As mentioned above, we put $t=s>0$ in theorem \ref{main-theorem-singular} and the condition in proposition~\ref{lemma:lowerordercoefficients} is satisfied. Note that under the assumption $\abs{\alpha}\leq s$ we have $M_0(H^0\rightarrow H^{-s})\subset M_0(H^{s-\abs{\alpha}}\rightarrow H^{-s})$. Hence we can choose the lower order coefficients from a less regular space in theorem~\ref{main-theorem-singular} (compare to proposition~\ref{lemma:generalcoefficients}).
We also note that when $\abs{\alpha}=0$ the space of multipliers $M_0(H
^s\rightarrow H^{-s})$ coincides with the one studied in~\cite{RS-fractional-calderon-low-regularity-stability}.

It follows from lemma \ref{lemma-properties-of-multipliers} that the space of multipliers is trivial for higher order operators, i.e. $M(H^{s-|\alpha|}\rightarrow H^{-s}) = \{0\}$ when $s-\abs{\alpha} < -s$. It would be possible to state theorem \ref{main-theorem-singular} for higher order PDOs, but that forces $a_\alpha = 0$ for all $\abs{\alpha} > 2s$. For this reason we only consider PDOs whose order is $m <2s$. See lemma \ref{lemma-properties-of-multipliers} and the related remarks for more details.

Our second theorem generalizes \cite[Theorem 1.1]{CLR18} and \cite[Theorem 1.1]{GSU20}, where similar results are proved when  $m=0,1$ and $s \in (0,1)$. It also generalizes \cite[Theorem 1.5]{CMR20} where the case $m=0$ and $s\in\R^+\setminus\Z$ was studied.

\begin{theorem}
\label{main-theorem-bounded} Let $\Omega \subset \mathbb R^n$ be a bounded Lipschitz domain where $n\geq 1$. Let $s \in \mathbb R^+ \setminus \mathbb Z$ and $m\in \mathbb N$ be such that $2s > m$. Let $$P_j (x, D) = \sum_{|\alpha|\leq m} a_{j,\alpha}(x) D^\alpha,\quad j = 1, 2,$$ be a linear PDOs of order $m$ with coefficients $a_{j,\alpha} \in H^{r_\alpha,\infty}(\Omega)$ where
\begin{align}\label{r-alpha-conditions}
    r_\alpha := \Bigg\{ \begin{matrix*} 
    0 & \mbox{if} & |\alpha|-s<0, \\ 
    |\alpha|-s+\delta & \mbox{if} & |\alpha|-s \in \{ 1/2, 3/2, ... \}, \\
    |\alpha|-s & \mbox{if} & \mbox{otherwise} \\
    \end{matrix*} \Bigg. 
\end{align}

\noindent for any fixed $\delta >0$. Given any two open sets $W_1, W_2 \subset \Omega_e$, suppose that the exterior DN maps $\Lambda_{P_i}$ for the equations
$((-\Delta)^s + P_j (x, D))u = 0$ in $\Omega$ satisfy 
$$ \Lambda_{P_1}f|_{W_2} = \Lambda_{P_2}f|_{W_2}$$
for all $f \in C^\infty_c(W_1)$. Then $P_1(x, D) = P_2(x, D)$.
\end{theorem}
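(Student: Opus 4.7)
The plan is to reduce Theorem \ref{main-theorem-bounded} to Theorem \ref{main-theorem-singular} by showing that the hypothesis $a_{j,\alpha}\in H^{r_\alpha,\infty}(\Omega)$ places each coefficient (after a suitable extension) inside the multiplier space $M_0(H^{s-|\alpha|}\to H^{-s})$. Since the coefficients of the PDO are only active on $\Omega$, this reduction is enough to deduce $P_1(x,D)=P_2(x,D)$ in $\Omega$ from the exterior DN data, and the Lipschitz hypothesis on $\Omega$ is exactly what makes the extension step available.

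First I would treat the regimes $|\alpha|<s$ and $|\alpha|\geq s$ separately. For low-order multi-indices we have $r_\alpha=0$ and $a_{j,\alpha}\in L^\infty(\Omega)$; because $\Omega$ is bounded, zero-extension to $\R^\dimens$ lies in $L^p(\R^\dimens)$ for every $p<\infty$, so Proposition \ref{lemma:lowerordercoefficients}\eqref{item4} gives $a_{j,\alpha}\in M_0(H^0\to H^{-s})$. Since $|\alpha|\leq s$ implies $H^{s-|\alpha|}\hookrightarrow H^0$, we also get $M_0(H^0\to H^{-s})\subset M_0(H^{s-|\alpha|}\to H^{-s})$, which is the space required by Theorem \ref{main-theorem-singular}. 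For high-order indices, the exponent $r_\alpha$ in \eqref{r-alpha-conditions} is defined precisely so that $r_\alpha\geq\max\{0,|\alpha|-s\}$ and $r_\alpha\notin\{1/2,3/2,5/2,\ldots\}$; the latter is the sole purpose of the $\delta$-shift at half-integer critical values $|\alpha|-s$.

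Second, I would use the Lipschitz regularity of $\Omega$ to apply a standard extension operator for $H^{r_\alpha,\infty}$, followed by multiplication by a cutoff $\chi\in C_c^\infty(\R^\dimens)$ equal to $1$ on $\overline{\Omega}$. This produces $\tilde a_{j,\alpha}\in H^{r_\alpha,\infty}_0(\Omega')$ for some bounded Lipschitz set $\Omega'\supset\overline{\Omega}$, with $\tilde a_{j,\alpha}=a_{j,\alpha}$ on $\Omega$. Proposition \ref{lemma:generalcoefficients}\eqref{item2} now applies (the half-integer exclusion has been arranged) and gives $\tilde a_{j,\alpha}\in M_0(H^{s-|\alpha|}\to H^{-s})$. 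Denoting $\tilde P_j=\sum_{|\alpha|\leq m}\tilde a_{j,\alpha}D^\alpha$, the DN maps $\Lambda_{\tilde P_j}$ coincide with the original $\Lambda_{P_j}$, since the operators are identical when acting on functions of the form arising in the Dirichlet problem (the coefficients are tested only inside $\Omega$). Theorem \ref{main-theorem-singular} applied to $\tilde P_1,\tilde P_2$ then yields $\tilde P_1|_{\Omega}=\tilde P_2|_{\Omega}$, hence $P_1(x,D)=P_2(x,D)$ in $\Omega$.

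The main obstacle is the delicate behaviour of the Bessel potential scale at half-integer regularities, where $H^{r,\infty}_0(\Omega)\neq\widetilde{H}^{r,\infty}(\Omega)$ and zero-extension of an element of $H^{r,\infty}(\Omega)$ need not preserve regularity; this is why the conclusion of Proposition \ref{lemma:generalcoefficients}\eqref{item2} is unavailable at those exponents. The $\delta$-shift baked into the definition of $r_\alpha$ is the cleanest way to bypass this: it asks for an arbitrarily small amount of additional regularity only on the discrete, measure-zero set of exceptional orders, and thereby makes part \eqref{item2} of Proposition \ref{lemma:generalcoefficients} directly applicable after the cutoff extension. Once this technical point is handled, the rest is an essentially mechanical translation of the hypotheses of Theorem \ref{main-theorem-bounded} into those of Theorem \ref{main-theorem-singular}.
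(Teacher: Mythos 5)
Your reduction of theorem \ref{main-theorem-bounded} to theorem \ref{main-theorem-singular} breaks down at the step where you place the extended-and-cut-off coefficients into $H^{r_\alpha,\infty}_0(\Omega')$ in order to invoke proposition \ref{lemma:generalcoefficients}~\ref{item2}. That proposition concerns the closure of $C_c^\infty(\Omega')$ in the $H^{r_\alpha,\infty}$-norm, and this is a strictly smaller space than the set of compactly supported elements of $H^{r_\alpha,\infty}(\mathbb R^n)$: smooth functions are not dense in $L^\infty$-based Bessel potential spaces (already for $r_\alpha=0$, the closure of $C_c^\infty$ in $L^\infty$ consists of continuous functions, whereas $L^\infty(\Omega)$ contains discontinuous ones). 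Multiplying an extension $A_\alpha$ of $a_\alpha$ by a cutoff therefore yields a compactly supported element of $H^{r_\alpha,\infty}(\mathbb R^n)$, but not, in general, an element of $H^{r_\alpha,\infty}_0(\Omega')$, and there is no reason why such a function should lie in $M_0(H^{s-|\alpha|}\to H^{-s})$; membership in $M(H^{s-|\alpha|}\to H^{-s})$ does follow from Kato--Ponce, but the subscript $0$, i.e.\ approximability by $C_c^\infty(\mathbb R^n)$ in the multiplier norm, is exactly what your argument does not supply when $r_\alpha>0$. Your treatment of the low-order coefficients through $L^p(\Omega)$ and proposition \ref{lemma:lowerordercoefficients}~\ref{item4} is fine; the gap concerns precisely the orders $|\alpha|>s$. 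The half-integer issue you single out is real but secondary.

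The $M_0$ hypothesis is not cosmetic: the well-posedness proof of lemma \ref{well-posedness} obtains coercivity by splitting $a_\alpha=a_{\alpha,1}+a_{\alpha,2}$ with $a_{\alpha,1}\in C_c^\infty(\mathbb R^n)$ and $\|a_{\alpha,2}\|_{s-|\alpha|,-s}<\epsilon$, a decomposition unavailable for a general $H^{r_\alpha,\infty}(\Omega)$ coefficient. This is why the paper does not reduce theorem \ref{main-theorem-bounded} to theorem \ref{main-theorem-singular}, but instead re-proves boundedness of the bilinear forms via the Kato--Ponce inequality (lemma \ref{boundedness-bilinear-bounded}) and re-derives coercivity by interpolating $\|w\|_{H^{r_\alpha}}\leq C\|w\|_{L^2}^{1-r_\alpha/s}\|w\|_{H^s}^{r_\alpha/s}$ and applying Young's inequality, exploiting the strict inequality $r_\alpha<s$ guaranteed by \eqref{r-alpha-conditions} (lemma \ref{well-posedness-bounded}); the Alessandrini identity, the Runge approximation and the inductive recovery of the coefficients then run exactly as in section \ref{section-main-sigular}. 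To repair your argument you would either have to strengthen the hypothesis to $a_\alpha\in H^{r_\alpha,\infty}_0$, which proves a weaker theorem, or show directly that compactly supported $H^{r_\alpha,\infty}$ functions can be approximated by smooth functions in the multiplier norm of $M(H^{s-|\alpha|}\to H^{-s})$, which is neither established in the paper nor obvious.
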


Our first theorem is formulated for general bounded open sets and the second theorem for Lipschitz domains. The difference arises in the proof of the well-posedness of the forward problem. We note that theorem \ref{main-theorem-bounded} holds for coefficients $a_\alpha$ which are smooth up to the boundary ($a_\alpha=g|_\Omega$ where $g\in C^\infty(\R^\dimens)$). The conditions \eqref{r-alpha-conditions} imply that one can choose $a_\alpha \in L^\infty(\Omega)$ for every $\alpha$ such that $|\alpha|<s$. The case $|\alpha| = s$ never happens, as $s$ is assumed not to be an integer. If $|\alpha| > s$, we have $a_\alpha \in H^{|\alpha|-s,\infty}(\Omega)$ when $|\alpha|-s \not\in \{ 1/2, 3/2, ... \}$. Thus the conditions \eqref{r-alpha-conditions} coincide with \cite{CLR18,GSU20} when $m=0,1$ and $s \in (0,1)$.

Our article is roughly divided into two parts. The first part of the article (theorem \ref{main-theorem-singular} and section \ref{section-main-sigular}) generalizes the study of the uniqueness problem for singular potentials in \cite{RS-fractional-calderon-low-regularity-stability} and the second part (theorem \ref{main-theorem-bounded} and section \ref{section-main-bounded}) generalizes the uniqueness problem for bounded first order perturbations in \cite{CLR18}.

The approach to prove theorems~\ref{main-theorem-singular} and \ref{main-theorem-bounded} is the following. First one shows that the forward problem is well-posed and the corresponding bilinear forms are bounded. This leads to the boundedness of the exterior DN maps and an Alessandrini identity. By a unique continuation property of the higher order fractional Laplacian one obtains a Runge approximation property for equation~\eqref{eq:fractionalpdo}. Using the Runge approximation and the Alessandrini identity for suitable test functions one proves the uniqueness of the inverse problem.

\subsection{On the earlier literature} Equation~\eqref{eq:fractionalpdo} and theorems~\ref{main-theorem-singular} and \ref{main-theorem-bounded} are related to the Calder\'on problem for the fractional Schr\"odinger equation first introduced in~\cite{GSU20}. There one tries to uniquely recover the potential $q$ in $\Omega$ by doing measurements in the exterior $\Omega_e$. This is a nonlocal (fractional) counterpart of the classical Calder\'on problem arising in electrical impedance tomography, where one obtains information about the electrical properties of some bounded domain by doing voltage and current measurements on the boundary~\cite{UHL-electrical-impedance-tomography, UH-inverse-problems-seeing-the-unseen}.
In~\cite{RS-fractional-calderon-low-regularity-stability} the study of the fractional Calder\'on problem is extended for ``rough" potentials $q$, i.e. potentials which are in general bounded Sobolev multipliers. First order perturbations were studied in~\cite{CLR18} assuming that the fractional part dominates the equation, i.e. $s\in (1/2, 1)$, and that the perturbations have bounded fractional derivatives. A higher order version ($s\in\R^+\setminus\Z$) of the fractional Calderón problem was introduced and studied in~\cite{CMR20}. These three articles \cite{CLR18, CMR20, RS-fractional-calderon-low-regularity-stability} motivate the study of higher order (rough) perturbations to the fractional Laplacian $\fraclaplace$ in equation \eqref{eq:fractionalpdo}. The natural restriction for the order of
~$P(x, D)$ in theorems
~\ref{main-theorem-singular} and \ref{main-theorem-bounded} is then $2s>m$, so that the fractional part governs the equation~\eqref{eq:fractionalpdo}. 

The fractional Calder\'on problem for $s\in (0, 1)$ has been studied in many settings. We refer to the survey
~\cite{Sal17} for a more detailed treatment. %The uniqueness for bounded potentials was proved in the original work~\cite{GSU20}. 
In the work~\cite{RS-fractional-calderon-low-regularity-stability} stability was proved for singular potentials, and in \cite{RS18} the related exponential instability was shown. The fractional Calder\'on problem has also been solved under single measurement~\cite{GRSU-fractional-calderon-single-measurement}.
The perturbed equation is related to the fractional magnetic Schr\"odinger equation which is studied in~\cite{CO-magnetic-fractional-schrodinger, LILI-semilinear-magnetic, LI-fractional-magnetic, LILI-fractional-magnetic-calderon}. See also \cite{BGU-lower-order-nonlocal-perturbations} for a fractional Schr\"odinger equation with a lower order nonlocal perturbation. Other variants of the fractional Calder\'on problem include semilinear fractional (magnetic) Schr\"odinger equation
~\cite{LL19, LL-fractional-semilinear-problems, LILI-semilinear-magnetic, LILI-fractional-magnetic-calderon}, fractional heat equation~\cite{LLR19,ruland2019quantitative} and fractional conductivity equation~\cite{Co18} (see also \cite{CLL19, GLX-calderon-nonlocal-elliptic-operators} for equations arising from a nonlocal Schr\"odinger-type elliptic operator). In the recent work~\cite{CMR20}, the first three authors of this article studied higher order versions ($s\in\R^+\setminus\Z$) of the fractional Calder\'on problem and proved uniqueness for the Calderón problem for the fractional magnetic Schr\"odinger equation (up to a gauge). This article continues these studies by showing uniqueness for the fractional Schr\"odinger equation with higher order perturbations and gives positive answer to a question posed in
~\cite[Question 7.5]{CMR20}.

\subsection{Examples of fractional models in the sciences}
Equations involving fractional Laplacians like~\eqref{eq:fractionalpdo} have applications in mathematics and natural sciences. Fractional Laplacians appear in the study of anomalous and nonlocal diffusion, and these diffusion phenomena can be used in many areas such as continuum mechanics, graph theory and ecology just to mention a few~\cite{VMRTM-nonlocal-diffusion-problems, BV-nonlocal-diffusion-applications, DGLZ2012, MV-nonlocal-diffusion-population-competition, RO-nonlocal-elliptic-equations-bounded-domains}. Another place where the fractional counterpart of the classical Laplacian naturally shows up is the formulation of fractional quantum mechanics~\cite{La00, LAS-fractional-schrodinger-equation, LA-fractional-quantum-mechanics}. See \cite{RS15-higher-order-frac} and references therein for possible applications of higher order fractional Laplacians. For more applications of fractional mathematical models, see
~\cite{BV-nonlocal-diffusion-applications, KM-random-walks-quide-anomalous-diffusion} and the references therein.

\subsection{The organization of the article}
In section \ref{sec:preliminaries} we introduce the notation and give preliminaries on Sobolev spaces and fractional Laplacians. We also define the spaces of rough coefficients (Sobolev multipliers) and discuss some of the basic properties. In section \ref{section-main-sigular} we prove theorem \ref{main-theorem-singular} in detail. Finally, in section \ref{section-main-bounded} we prove theorem \ref{main-theorem-bounded} but as the proofs of both theorems are very similar we do not repeat all identical steps and we keep our focus in the differences of the proofs.

\subsection*{Acknowledgements} G.C. was partially supported by the European Research Council under Horizon 2020 (ERC CoG 770924). K.M. and J.R. were supported by Academy of Finland (Centre of Excellence in Inverse Modelling and Imaging, grant numbers 284715 and 309963). G.U. was partly supported by NSF, a Walker Family Endowed Professorship at UW and a Si Yuan Professorship at IAS, HKUST. G.C. would like to thank Angkana R\"uland for helpful discussions and her hospitality during his visit to Max Planck Institute for Mathematics in the Sciences. J.R. and G.U. wish to thank Maarten V. de Hoop, Rice University and Simons Foundation for providing the support to participate 2020 MATH + X Symposium on
Inverse Problems and Deep Learning, Mitigating Natural Hazards, where this project was initiated.

\section{Preliminaries}\label{preliminaries}\label{sec:preliminaries}

In this section we recall some basic theory of Sobolev spaces, Fourier analysis and fractional Laplacians on $\R^\dimens$. We also introduce the spaces of Sobolev multipliers and prove a few properties for them. Some auxiliary lemmas which are needed in the proofs of our main theorems are given as well. We follow the references~\cite{AB-psidos-and-singular-integrals, GSU20, ML-strongly-elliptic-systems, MS-theory-of-sobolev-multipliers, TRI-interpolation-function-spaces, WO-pseudodifferential-operatros} (see also section 2 in \cite{CMR20}).

\subsection{Sobolev spaces}\label{subsec-sobolev}
The (inhomogeneous) fractional $L^2$-based Sobolev space of order $r\in\R$ is defined to be
\begin{equation}
H^{r}(\R^\dimens)=\{u\in\tempered(\R^\dimens): \ifourier(\langle\cdot\rangle^r\hat{u})\in L^2(\R^\dimens)\}
\end{equation}
equipped with the norm
\begin{equation}
\aabs{u}_{H^{r}(\R^\dimens)}=\aabs{\ifourier(\langle\cdot\rangle^r\hat{u})}_{L^2(\R^\dimens)}.
\end{equation}
Here $\hat{u}=\fourier (u)$ is the Fourier transform of a tempered distribution $u\in\tempered(\R^\dimens)$, $\ifourier$ is the inverse Fourier transform and $\langle x\rangle=(1+\abs{x}^2)^{1/2}$. We define the fractional Laplacian of order $s\in\R^+\setminus\Z$ as $\fraclaplace \varphi=\ifourier(\abs{\cdot}^{2s}\hat{\varphi})$ where $\varphi\in\schwartz(\R^\dimens)$ is a Schwartz function. Then $\fraclaplace$ extends to a bounded operator $\fraclaplace\colon H^r(\R^\dimens)\rightarrow H^{r-2s}(\R^\dimens)$ for all $r\in\R$ by density of $\schwartz(\R^\dimens)$ in $H
^r(\R^\dimens)$~\cite[Lemma 2.1]{GSU20} (see also~\cite[Section 2.2]{CMR20}). It would be possible to define the fractional Laplacian in many other ways, according to the intended application (check e.g.~\cite{DL-fractional-laplacians, KWA-ten-definitions-fractional-laplacian, LPGSGZMCMAK-what-is-fractional-laplacian}). In particular, our global definition of the fractional Laplacian using Fourier transform will be different from the spectral definition used in~\cite{helin} (see for example~\cite{DL-fractional-laplacians, SV-fractional-laplacians-are-different}).

Let $\Omega\subset\R^\dimens$ be an open set and $F\subset\R^\dimens$ a closed set. We define the following Sobolev spaces
\begin{align}
H_F^r(\R^\dimens) &=\{u\in H^r(\R^\dimens): \spt(u)\subset F\} \\
\widetilde{H}^r(\Omega)&= \ \text{closure of} \ C_c^{\infty}(\Omega) \ \text{in} \  H^r(\R^\dimens) \\
H^r(\Omega)&=\{u|_\Omega: u\in H^r(\R^\dimens)\} \\
H_0^r(\Omega)&= \ \text{closure of} \ C_c^{\infty}(\Omega) \ \text{in} \ H^r(\Omega). 
\end{align}
It trivially follows that $\widetilde{H}^r(\Omega)\subset H_0^r(\Omega)$ and $\widetilde{H}^r(\Omega)\subset H^r_{\overline{\Omega}}(\R^\dimens)$. Further, we have $(\widetilde{H}^r(\Omega))^*=H^{-r}(\Omega)$ and $(H^r(\Omega))^*=\widetilde{H}^{-r}(\Omega)$ for any open set $\Omega$ and $r\in\R$~\cite[Theorem 3.3]{CWHM-sobolev-spaces-on-non-lipchtiz-domains}. If~$\Omega$ is in addition a Lipschitz domain, then we have $\widetilde{H}^r(\Omega)=H_{\overline{\Omega}}^r(\R^\dimens)$ for all $r\in\R$ and $H_0^r(\Omega)=H^r_{\overline{\Omega}}(\R^\dimens)$ when $r\geq 0$ such that $r\notin \{\frac{1}{2}, \frac{3}{2}, \frac{5}{2} \dotso\}$~\cite[Theorems 3.29 and 3.33]{ML-strongly-elliptic-systems}.

More generally, let $1\leq p\leq \infty$ and $r\in\R$. We define the Bessel potential space
\begin{equation}
H^{r, p}(\R^\dimens)=\{u\in\tempered(\R^\dimens): \ifourier(\langle\cdot\rangle^r\hat{u})\in L^p(\R^\dimens)\}
\end{equation}
equipped with the norm
\begin{equation}
\aabs{u}_{H^{r,p}(\R^\dimens)}=\aabs{\ifourier(\langle\cdot\rangle^r\hat{u})}_{L^p(\R^\dimens)}.
\end{equation}
We also write $\ifourier (\langle\cdot\rangle^r\hat{u})=:J^r u$ where the Fourier multiplier $J=(\id-\Delta)^{1/2}$ is called the Bessel potential. We have the continuous inclusions $H^{r, p}(\R^\dimens)\hookrightarrow H^{t, p}(\R^\dimens)$ whenever $r\geq t$~\cite[Theorem 6.2.3]{BL-interpolation-spaces}. By the Mikhlin multiplier theorem one can show that $\fraclaplace\colon H^{r, p}(\R^\dimens)\rightarrow H
^{r-2s, p}(\R^\dimens)$ is continuous whenever $s\geq 0$ and $1<p<\infty$ (see~\cite[Remark 2.2]{GSU20} and~\cite[Theorem 7.2]{AB-psidos-and-singular-integrals}). 
The local version of the space $H^{r, p}(\R^\dimens)$ is defined as earlier by the restrictions
\begin{equation}
H^{r, p}(\Omega)=\{u|_\Omega: u\in H^{r, p}(\R^\dimens)\}
\end{equation}
where $\Omega\subset\R^\dimens$ is any open set. This space is equipped with the quotient norm
\begin{equation}
\aabs{v}_{H^{r, p}(\Omega)}=\inf \{\aabs{w}_{H^{r, p}(\R^\dimens)}: w\in H^{r, p}(\R^\dimens), \  w|_\Omega=v\}.
\end{equation}
We have the continuous inclusions $H^{r, p}(\Omega)\hookrightarrow H^{t, p}(\Omega)$ whenever $r\geq t$ by the definition of the quotient norm. 

We also define the spaces
\begin{align}
H_F^{r, p}(\R^\dimens) &=\{u\in H^{r, p}(\R^\dimens): \spt(u)\subset F\} \\
\widetilde{H}^{r, p}(\Omega)&= \ \text{closure of} \ C_c^{\infty}(\Omega) \ \text{in} \  H^{r, p}(\R^\dimens) \\
H_0^{r, p}(\Omega)&= \ \text{closure of} \ C_c^{\infty}(\Omega) \ \text{in} \ H^{r, p}(\Omega)
\end{align}
where $F\subset\R^\dimens$ is a closed set. Note that $\widetilde{H}^{r, p}(\Omega)\subset H_0^{r, p}(\Omega)$ since the restriction map $|_\Omega\colon H^{r, p}(\R^\dimens)\rightarrow H^{r, p}(\Omega)$ is by definition continuous. One can also see that $\widetilde{H}^{r, p}(\Omega)\subset H^{r, p}_{\overline{\Omega}}(\R^\dimens).$ If $\Omega$ is a bounded $C^\infty$-domain and $1<p<\infty$, then we have \cite[Theorem 1 in section 4.3.2]{TRI-interpolation-function-spaces}
\begin{align}
\widetilde{H}^{r, p}(\Omega)&=H^{r, p}_{\overline{\Omega}}(\R^\dimens), \quad r\in\R \\
H^{r, p}_0(\Omega)&=H^{r, p}(\Omega), \quad r\leq \frac{1}{p}.
\end{align}

Some authors (especially in~\cite{CLR18, RS-fractional-calderon-low-regularity-stability}) use the notation $W^{r, p}(\Omega)$ for Bessel potential spaces. We have decided to use the notation $H^{r, p}(\Omega)$ so that these spaces are not confused with the Sobolev-Slobodeckij spaces which are in general different from the Bessel potential spaces~\cite[Remark 3.5]{DINEPV-hitchhiker-sobolev}.

The equation \eqref{eq:fractionalpdo} we study is nonlocal. Instead of putting boundary conditions we impose exterior values for the equation. This can be done by saying that $u=f$ in $\Omega_e$ if $u-f\in\widetilde{H}^s(\Omega)$. Motivated by this we define the (abstract) trace space $X=H^r(\R^\dimens)/\widetilde{H}^r(\Omega)$, i.e. functions in $X$ are the same (have the same trace) if they agree in $\Omega_e$. If~$\Omega$ is a Lipschitz domain, then we have $X=H^r(\Omega_e)$ and $X^*=H^{-r}_{\overline{\Omega}_e}(\R^\dimens)$~\cite[p.463]{GSU20}.

\subsection{Properties of the fractional Laplacian}\label{subsec-laplacian}
The fractional Laplacian admits two important properties which we need in our proofs. The first one is unique continuation property (UCP) which is used in proving the Runge approximation property.
\begin{lemma}[UCP]
\label{lemma:ucpoffractionallaplacian}
Let $s\in\R^+\setminus\Z$, $r\in\R$ and $u\in H^{r}(\R^\dimens)$. If $(-\Delta)^s u|_V=0$ and $u|_V=0$ for some nonempty open set $V\subset\R^\dimens$, then $u=0$.
\end{lemma}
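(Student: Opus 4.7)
The plan is to reduce the higher order case to the unique continuation principle for fractional Laplacians of order $\sigma\in(0,1)$ proved in~\cite{GSU20}, by exploiting the semigroup identity $(-\Delta)^{N+\sigma}=(-\Delta)^\sigma\circ(-\Delta)^N$ to strip off the integer part of $s$.

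First I would split $s=N+\sigma$ with $N\in\Z_{\geq 0}$ and $\sigma\in(0,1)$ (possible since $s\in\R^+\setminus\Z$) and set $v:=(-\Delta)^N u\in H^{r-2N}(\R^\dimens)$. I would then verify the two hypotheses of the base-case UCP for $v$ on $V$. For the first, distributional vanishing is preserved by differentiation: $u|_V=0$ in $\distr(V)$ implies $\ip{D^\alpha u}{\varphi}=(-1)^{|\alpha|}\ip{u}{D^\alpha\varphi}=0$ for every $\varphi\in C_c^\infty(V)$ and every multi-index $\alpha$, hence $v|_V=0$. For the second, the Fourier-side identity $\abs{\xi}^{2s}=\abs{\xi}^{2\sigma}\abs{\xi}^{2N}$ gives $(-\Delta)^\sigma v=(-\Delta)^s u$ as tempered distributions, so the hypothesis of the lemma forces $(-\Delta)^\sigma v|_V=0$.

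Next I would invoke the UCP of~\cite{GSU20} for the order $\sigma\in(0,1)$ (which is valid in arbitrary Sobolev regularity) applied to $v$, concluding $v\equiv 0$ on $\R^\dimens$. Taking Fourier transforms then yields $\abs{\xi}^{2N}\widehat{u}(\xi)=0$ in $\tempered(\R^\dimens)$, so $\widehat{u}$ is a distribution supported at the origin and hence $u$ is a polynomial; since $H^r(\R^\dimens)$ contains no nonzero polynomial (their Fourier transforms are linear combinations of derivatives of $\delta_0$, which are not locally square-integrable against $\langle\xi\rangle^{2r}$), I conclude $u\equiv 0$.

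The main obstacle is not so much analytic as a matter of careful bookkeeping: one has to justify that the semigroup identity $(-\Delta)^s=(-\Delta)^\sigma\circ(-\Delta)^N$ holds as an equality of tempered distributions on $H^r(\R^\dimens)$, and confirm that the $\sigma\in(0,1)$ UCP applies to $v\in H^{r-2N}(\R^\dimens)$ regardless of the sign of the exponent $r-2N$. Both points are standard, but worth a line each. Beyond these, no Caffarelli--Silvestre-type extension or Carleman estimate needs to be redeveloped for the higher order regime.
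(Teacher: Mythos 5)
Your argument is correct and is essentially the proof the paper relies on: the text does not reprove the lemma but cites \cite{CMR20}, where exactly this reduction is carried out — strip off the integer part via $v=(-\Delta)^N u$, use locality of $(-\Delta)^N$ to get $v|_V=0$, apply the $\sigma\in(0,1)$ UCP of \cite{GSU20} (valid on $H^t(\R^\dimens)$ for every $t\in\R$), and rule out nonzero polynomials in $H^{r}(\R^\dimens)$. No discrepancies to report.
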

Lemma \ref{lemma:ucpoffractionallaplacian} is proved in~\cite{CMR20} for $s>1$ by reducing the problem to the UCP result for $s \in (0,1)$ in \cite{GSU20}. Note that such property is not true for local operators like the classical Laplacian $(-\Delta)$. The second property we need is the Poincar\'e inequality, which is used in showing that the forward problem for the perturbed fractional Schr\"odinger equation is well-posed.

\begin{lemma}[Poincar\'e inequality]
\label{lemma:fractionalpoincareinequliaty}
Let $s\in\R^+\setminus\Z$, $K\subset\R^\dimens$ compact set and $u\in H^s_K(\R^\dimens)$. There exists a constant $c=c(n, K, s)> 0$ such that
\begin{equation}
\aabs{u}_{L^2(\R^\dimens)}\leq c\aabs{(-\Delta)^{s/2}u}_{L^2(\R^\dimens)}.
\end{equation}
\end{lemma}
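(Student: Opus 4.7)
The plan is to argue by contradiction via a standard compactness argument. Suppose the inequality fails; then there exists a sequence $u_j\in H^s_K(\R^\dimens)$ with $\aabs{u_j}_{L^2(\R^\dimens)}=1$ and $\aabs{(-\Delta)^{s/2}u_j}_{L^2(\R^\dimens)}\to 0$. Using the elementary pointwise Fourier-side bound $(1+|\xi|^2)^s\leq C_s(1+|\xi|^{2s})$, valid for every $s\geq 0$, one estimates
\begin{equation}
\aabs{u_j}_{H^s(\R^\dimens)}^2 \;\leq\; C_s\Bigl(\aabs{u_j}_{L^2(\R^\dimens)}^2 + \aabs{(-\Delta)^{s/2}u_j}_{L^2(\R^\dimens)}^2\Bigr),
\end{equation}
so that the sequence $(u_j)$ is uniformly bounded in $H^s(\R^\dimens)$.

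Next, I would extract a (non-relabelled) subsequence converging weakly in $H^s(\R^\dimens)$ to some limit $u$. Testing weak convergence against any $\varphi\in C^\infty_c(\R^\dimens\setminus K)\subset H^{-s}(\R^\dimens)$ shows $\ip{u}{\varphi}=0$, so $u$ is supported in $K$ and hence $u\in H^s_K(\R^\dimens)$. The compactness of the local embedding $H^s(U)\hookrightarrow L^2(U)$ for any bounded open $U\supset K$ (Rellich--Kondrachov, valid for every $s>0$) combined with the common support condition gives $u_j\to u$ strongly in $L^2(\R^\dimens)$, in particular $\aabs{u}_{L^2(\R^\dimens)}=1$.

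Finally, since $(-\Delta)^{s/2}\colon H^s(\R^\dimens)\to L^2(\R^\dimens)$ is bounded (by Plancherel), it is weakly continuous, so $(-\Delta)^{s/2}u_j\rightharpoonup (-\Delta)^{s/2}u$ weakly in $L^2(\R^\dimens)$. Combined with the hypothesis that $(-\Delta)^{s/2}u_j\to 0$ strongly, the weak limit must vanish: $(-\Delta)^{s/2}u=0$. On the Fourier side this means $|\xi|^s\hat{u}(\xi)=0$ almost everywhere, hence $\hat{u}$ is a distribution supported at the origin and $u$ is a polynomial. A compactly supported polynomial is zero, contradicting $\aabs{u}_{L^2(\R^\dimens)}=1$.

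The main subtlety I expect is the step where weak $H^s$-convergence of a sequence with common compact support is upgraded to strong $L^2$-convergence on the whole of $\R^\dimens$; this has to be justified for arbitrary $s\in\R^+\setminus\Z$ and not merely for $s\in(0,1)$, but it reduces quickly to the standard Rellich theorem applied on a single bounded neighbourhood of $K$. Everything else is either a direct Plancherel computation or elementary distribution theory.
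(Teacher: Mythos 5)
Your compactness argument is correct: the uniform $H^s$ bound from the Fourier-side inequality $(1+|\xi|^2)^s\leq 2^s(1+|\xi|^{2s})$, the preservation of the support condition under weak limits, the upgrade to strong $L^2$ convergence via Rellich on a fixed bounded neighbourhood of $K$ (which works for all real $s>0$, not just $s\in(0,1)$), and the final step $(-\Delta)^{s/2}u=0$ with $\spt(u)\subset K$ forcing $u=0$ are all sound; in fact, since $u\in H^s(\R^n)$ implies $\hat{u}\in L^2_{\mathrm{loc}}$ is a function, you can conclude $\hat{u}=0$ a.e.\ directly from $|\xi|^s\hat{u}(\xi)=0$ without invoking the classification of distributions supported at a point. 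Note, however, that the paper does not prove this lemma at all: it defers to \cite{CMR20}, where several proofs are given, the most elementary being a direct quantitative one. There one uses that $u\in H^s_K(\R^n)$ gives $\|\hat{u}\|_{L^\infty}\leq\|u\|_{L^1}\leq |K|^{1/2}\|u\|_{L^2}$, splits $\|u\|_{L^2}^2=\int_{|\xi|\leq\rho}|\hat{u}|^2+\int_{|\xi|>\rho}|\hat{u}|^2\leq C\rho^n|K|\,\|u\|_{L^2}^2+\rho^{-2s}\|(-\Delta)^{s/2}u\|_{L^2}^2$, and chooses $\rho$ small; other proofs in \cite{CMR20} reduce to the known case $s\in(0,1]$ by factoring the symbol. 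Your route trades the explicit constant (the direct proof yields $c$ depending on $\mathrm{diam}(K)$ in a computable way, which matters for stability-type statements) for a shorter, softer argument; for the purposes of this paper, where the inequality is only used qualitatively in the coercivity estimate of the well-posedness lemmas, either proof suffices.
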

Many different proofs for lemma \ref{lemma:fractionalpoincareinequliaty} are given in~\cite{CMR20}. We note that in the literature the fractional Poincar\'e inequality is typically considered only when $s \in (0,1)$. 

Finally, we recall the fractional Leibniz rule, also known as the Kato-Ponce inequality. It is used to show the boundedness of the bilinear forms associated to the perturbed fractional Schr\"odinger equation in the case when the coefficients of the PDO have bounded fractional derivatives.
\begin{lemma}[Kato-Ponce inequality]
\label{lemma:katoponce}
Let $s\geq 0$, $1<r<\infty$, $1<q_1\leq\infty$ and $1<p_2\leq\infty$ such that $\frac{1}{r}=\frac{1}{p_1}+\frac{1}{q_1}=\frac{1}{p_2}+\frac{1}{q_2}$. If $f\in L^{p_2}(\R^\dimens)$, $J^s f\in L^{p_1}(\R^\dimens)$, $g\in L^{q_1}(\R^\dimens)$ and $J^s g\in L^{q_2}(\R^\dimens)$, then $J^s(fg)\in L^r(\R^\dimens)$ and
\begin{equation}
\aabs{J^s(fg)}_{L^r(\R^\dimens)}\leq C(\aabs{J^sf}_{L^{p_1}(\R^\dimens)}\aabs{g}_{L^{q_1}(\R^\dimens)}+\aabs{f}_{L^{p_2}(\R^\dimens)}\aabs{J^sg}_{L^{q_2}(\R^\dimens)})
\end{equation}
where $J^s$ is the Bessel potential of order $s$ and $C=C(s, \dimens, r, p_1, p_2, q_1, q_2)$.
\end{lemma}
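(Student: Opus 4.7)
The plan is to reduce to a Littlewood–Paley style argument via Bony's paraproduct decomposition, since the Kato–Ponce estimate is essentially a statement about how frequencies interact under pointwise multiplication. First I would dyadically decompose each factor: writing $f=\sum_j \Delta_j f$ and $g=\sum_k \Delta_k g$ where $\Delta_j$ are the usual Littlewood–Paley projections onto annuli of radius $\sim 2^j$ (with a low-frequency piece $\Delta_{\leq 0}$), and setting $S_j = \sum_{k\leq j-3}\Delta_k$, one obtains Bony's splitting
\begin{equation}
fg \;=\; \Pi_f(g) + \Pi_g(f) + R(f,g) \;:=\; \sum_j S_{j-3}f\,\Delta_j g \;+\; \sum_j S_{j-3}g\,\Delta_j f \;+\; \sum_{|j-k|\leq 2} \Delta_j f\,\Delta_k g,
\end{equation}
and the task is to bound $J^s$ applied to each of the three pieces in $L^r(\R^n)$. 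The characterization $\|J^s h\|_{L^r} \sim \|(\sum_j \langle 2^j\rangle^{2s}|\Delta_j h|^2)^{1/2}\|_{L^r}$ for $1<r<\infty$ together with Bernstein's inequalities, the vector-valued Fefferman–Stein maximal inequality, and Hölder reduces everything to classical estimates.

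For the first paraproduct $\Pi_f(g)$, the summand $S_{j-3}f\,\Delta_j g$ is frequency-localized near $|\xi|\sim 2^j$, so $J^s$ effectively costs a factor $\langle 2^j\rangle^s$ which I place on $\Delta_j g$. Applying Hölder with exponents $(p_2,q_2)$ and then the square function characterization of $L^{q_2}$ together with the $L^{p_2}$-boundedness of the maximal function yields
\begin{equation}
\|J^s \Pi_f(g)\|_{L^r} \;\lesssim\; \|f\|_{L^{p_2}}\,\|J^s g\|_{L^{q_2}}.
\end{equation}
The symmetric estimate gives $\|J^s \Pi_g(f)\|_{L^r}\lesssim \|J^s f\|_{L^{p_1}}\|g\|_{L^{q_1}}$. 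The remainder $R(f,g)$ is handled similarly but now $s\geq 0$ is distributed onto whichever factor one prefers (for $s\geq 0$ one can use the trivial bound $\langle 2^j\rangle^s \lesssim \langle 2^j\rangle^s + \langle 2^k\rangle^s$ when $|j-k|\leq 2$), producing both types of terms on the right-hand side.

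The main obstacle is the endpoint cases $q_1=\infty$ or $p_2=\infty$, where the Fefferman–Stein vector-valued maximal inequality fails and the naive square-function estimates break down. The standard fix is to replace the $L^\infty$ bound on the low-frequency factor by an $\ell^\infty$ bound on its Littlewood–Paley pieces (which is dominated by the $L^\infty$ norm via a uniform Young's-inequality estimate on the convolution kernels of $S_{j-3}$), and to exploit the disjoint frequency supports in the first two paraproducts so that the sum in $j$ can be estimated by a single square function rather than by summing Hölder bounds. A second subtlety is the low-frequency regime: since $J^s$ behaves like the identity near the origin, the low-frequency portion $\Delta_{\leq 0}(fg)$ must be estimated separately, but here $\langle\xi\rangle^s$ is bounded and the estimate reduces to the ordinary Hölder inequality $\|fg\|_{L^r}\leq \|f\|_{L^{p_2}}\|g\|_{L^{q_2}}$. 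Combining the three paraproduct bounds with the low-frequency bound yields the claimed inequality with $C=C(s,n,r,p_1,p_2,q_1,q_2)$.
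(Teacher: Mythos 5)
The paper does not prove this lemma at all: it is quoted as a known result, with the proof delegated to the cited references (Gulisashvili--Kon, Grafakos--Oh, and the Kato--Ponce commutator estimates). Your sketch is essentially a reconstruction of the standard argument from exactly those references: Bony paraproduct decomposition, the Littlewood--Paley square-function characterization of $\|J^s h\|_{L^r}$ for $1<r<\infty$, Bernstein plus the vector-valued Fefferman--Stein maximal inequality for the two paraproducts, and a separate treatment of the remainder and of the low-frequency block of the inhomogeneous operator $J^s$. You also correctly identify the two genuinely delicate points, namely the endpoints $q_1=\infty$ or $p_2=\infty$ (where one must avoid the maximal inequality on the $L^\infty$ factor and instead use the uniform bound $\sup_j\|S_j f\|_{L^\infty}\lesssim\|f\|_{L^\infty}$ together with the frequency localization of the paraproduct output) and the low-frequency regime. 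So there is no conflict of method to report --- your route is the intended one.

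Two places where the outline is thinner than a complete proof. First, for the remainder $R(f,g)$ the output of $\Delta_jf\,\Delta_kg$ with $|j-k|\leq 2$ is supported in the ball $|\xi|\lesssim 2^{j}$ rather than an annulus, so after applying $\Delta_l$ to the product one must sum over $l\leq j+O(1)$; the geometric series $\sum_{l\leq j}\langle 2^l\rangle^s\langle 2^j\rangle^{-s}$ converges only for $s>0$, and the case $s=0$ has to be dispatched separately (it is just H\"older, since $J^0=\mathrm{Id}$). Your phrase about distributing $\langle 2^j\rangle^s$ ``onto whichever factor one prefers'' hides this summation. Second, the endpoint argument you describe in words is precisely the part of the proof that occupies most of the effort in Grafakos--Oh; as written it is a plan rather than an estimate. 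Neither point is a wrong turn, but a referee would ask you to either carry them out or, as the authors do, simply cite the literature.
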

The proof of lemma \ref{lemma:katoponce} can be found in~\cite{GK-schrodinger-operators} (see also~\cite{GO14, KAPO-commutator-estimates-kato-ponce}).

\subsection{Spaces of rough coefficients}\label{subsec-rough}

Following \cite[Ch. 3]{MS-theory-of-sobolev-multipliers}, we introduce the space of multipliers $M(H^r\rightarrow H^t)$ between pairs of Sobolev spaces. Here we are assuming that $r,t\in \mathbb R$. The coefficients of $P(x,D)$ in theorem \ref{main-theorem-singular} will be picked from such spaces of multipliers.

If $f\in \mathcal D'(\R^n)$ is a distribution, we say that $f\in M(H^r\rightarrow H^t)$ whenever the norm 
$$\|f\|_{r,t} := \sup \{\abs{\ip{f}{u v}} \,;\, u,v \in C_c^\infty(\mathbb R^n), \norm{u}_{H^r(\R^n)} = \norm{v}_{H^{-t}(\R^n)} =1 \}$$
is finite. Here $uv$ indicates the pointwise product of functions, while $\ip{\cdot}{\cdot}$ is the duality pairing. If the distribution $f$ happens to be a function, the duality pairing can be defined as $$ \langle f, uv \rangle = \int_{\mathbb R^n} f(x)u(x)v(x) \der x. $$  By $M_0(H^r\rightarrow H^t)$ we indicate the closure of $C^\infty_c(\mathbb R^n)$ in $M(H^r\rightarrow H^t)\subset \mathcal D'(\mathbb R^n)$. If $f\in M(H^r\rightarrow H^t)$ and $u,v \in C_c^\infty(\mathbb R^n)$ are both non-vanishing, we have the multiplier inequality
\begin{equation}\label{multiplier-inequality}
    \abs{\ip{f}{uv}} = \left|\ip{ f}{\frac{u}{\norm{u}_{H^r(\R^n)}}\frac{v}{\norm{v}_{H^{-t}(\R^n)}}}\right| \norm{u}_{H^r(\R^n)} \norm{v}_{H^{-t}(\R^n)} \leq \|f\|_{r,t}\norm{u}_{H^r(\R^n)} \norm{v}_{H^{-t}(\R^n)}.
\end{equation}

By the density of $C_c^\infty(\R^n) \times C_c^\infty(\R^n)$ in $ H^r(\R^n) \times H^{-t}(\R^n)$ with respect to the product norm $\norm{(u,v)} = \max\{\norm{u}_{H^r(\R^n)},\norm{v}_{H^{-t}(\R^n)}\}$ and estimate \eqref{multiplier-inequality}, there is a unique continuous extension of $(u,v) \mapsto \langle f, uv \rangle$ acting on $(u,v)\in H^r(\R^n)\times H^{-t}(\R^n)$. More precisely, each $f \in M(H^r\rightarrow H^t)$ gives rise to a linear multiplication map $m_f : H^r(\R^n) \rightarrow H^t(\R^n)$ defined by 
\begin{align}
    \langle m_f(u),v \rangle := \lim_{i \to \infty}\langle f,u_iv_i \rangle \quad \mbox{for all} \quad (u,v)\in H^r(\R^n)\times H^{-t}(\R^n),
\end{align}
where $(u_i,v_i) \in C_c^\infty(\R^n) \times C_c^\infty(\R^n)$ is any Cauchy sequence in $H^r(\R^n) \times H^{-t}(\R^n)$ converging to $(u,v)$. The existence of the limit is granted by completeness and formula \eqref{multiplier-inequality}, which ensures that $\langle f,u_iv_i \rangle$ is also a Cauchy sequence. In fact, we have
\begin{equation*}
\begin{split}
    \abs{\langle f,u_m v_m \rangle-\langle f,u_n v_n \rangle }
    &\leq \abs{\langle f,u_m(v_m-v_n) \rangle} + \abs{\langle f,(u_m-u_n)v_n \rangle} \\
    &\leq \norm{f}_{r,t}\left(\norm{u_m}_{H^r(\R^n)}\norm{v_m-v_n}_{H^{-t}(\R^n)}+\norm{u_m-u_n}_{H^r(\R^n)}\norm{v_n}_{H^{-t}(\R^n)}\right)\\
    &\leq \norm{f}_{r,t}\left(\norm{u_m}_{H^r(\R^n)}+\norm{v_n}_{H^{-t}(\R^n)}\right)\norm{(u_m,v_m)-(u_n,v_n)},
\end{split}
\end{equation*}
where $\norm{u_m}_{H^r(\R^n)}+\norm{v_n}_{H^{-t}(\R^n)}$ is bounded by a constant independent of $m$ and $n$. The independence of the limit on the particular sequence $(u_i,v_i)$ can be showed by a similar estimate. 

We can analogously define the unique adjoint multiplication map $m_f^*: H^{-t}(\R^n) \to H^{-r}(\R^n)$ such that 
\[\ip{m_f^*(v)}{u} := \lim_{i \to \infty}\langle f,u_iv_i \rangle \quad \mbox{for all} \quad (u,v)\in H^r(\R^n)\times H^{-t}(\R^n).\] Since one sees that the adjoint of $m_f$ is $m_f^*$, the chosen notation is justified. For convenience, in the rest of the paper we will just write $fu$ for both $m_f(u)$ and $m_f^*(u)$.

\begin{remark}
The spaces of rough coefficients we use are generalizations of the ones considered in \cite{RS-fractional-calderon-low-regularity-stability}. In fact, the space $Z^{-s}(\mathbb R^n)$ used there coincides with our space $M(H^s\rightarrow H^{-s})$. 
\end{remark}

In the next lemma we state some elementary properties of the spaces of multipliers. Other interesting properties may be found in \cite{MS-theory-of-sobolev-multipliers}.

\begin{lemma}\label{lemma-properties-of-multipliers}
Let $\lambda, \mu \geq 0$ and $r,t\in\mathbb R$. Then
 \begin{enumerate}[(i)]
    \item\label{multiplier-lemma-item1} $M(H^r\rightarrow H^t) = M(H^{-t}\rightarrow H^{-r})$, and the norms associated to the two spaces also coincide.
    \medskip
    \item\label{multiplier-lemma-item2} $M(H^{r-\lambda}\rightarrow H^{t+\mu}) \hookrightarrow M(H^r\rightarrow H^t)$ continuously.
    \medskip
    \item\label{multiplier-lemma-item3} $M(H^r\rightarrow H^t) = \{0\}$ whenever $r<t$.
\end{enumerate}
\end{lemma}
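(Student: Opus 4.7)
Parts (\ref{multiplier-lemma-item1}) and (\ref{multiplier-lemma-item2}) are essentially bookkeeping from the definition. For (\ref{multiplier-lemma-item1}), the defining supremum is symmetric in the pair $(u,v)$: swapping $u \leftrightarrow v$ leaves $\abs{\ip{f}{uv}} = \abs{\ip{f}{vu}}$ invariant (pointwise multiplication is commutative) and sends the normalization $\norm{u}_{H^r(\R^n)} = \norm{v}_{H^{-t}(\R^n)} = 1$ to exactly the one defining $\norm{f}_{-t,-r}$. Hence $\norm{f}_{r,t} = \norm{f}_{-t,-r}$ as norms and the spaces coincide. For (\ref{multiplier-lemma-item2}), the hypothesis $\lambda,\mu \geq 0$ yields continuous embeddings $H^r(\R^n) \hookrightarrow H^{r-\lambda}(\R^n)$ and $H^{-t}(\R^n) \hookrightarrow H^{-(t+\mu)}(\R^n)$ of operator norm at most $1$. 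Applying the multiplier inequality \eqref{multiplier-inequality} in the target space $M(H^{r-\lambda}\to H^{t+\mu})$ to $u,v \in C_c^\infty(\R^n)$ normalized in $H^r$ and $H^{-t}$ gives $\abs{\ip{f}{uv}} \leq \norm{f}_{r-\lambda,t+\mu}$; taking the supremum produces $\norm{f}_{r,t} \leq \norm{f}_{r-\lambda,t+\mu}$.

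For (\ref{multiplier-lemma-item3}) I propose a high-frequency modulation argument. Fix arbitrary $\varphi, \psi \in C_c^\infty(\R^n)$ and a unit vector $e \in \R^n$, and for $\lambda > 0$ set
\begin{equation*}
u_\lambda(x) = e^{i\lambda e \cdot x}\varphi(x), \qquad v_\lambda(x) = e^{-i\lambda e \cdot x}\psi(x),
\end{equation*}
so that $u_\lambda v_\lambda = \varphi\psi$ is independent of $\lambda$, while $\widehat{u_\lambda}(\xi) = \widehat{\varphi}(\xi - \lambda e)$ and $\widehat{v_\lambda}(\xi) = \widehat{\psi}(\xi + \lambda e)$. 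A direct change of variables gives
\begin{equation*}
\norm{u_\lambda}_{H^s(\R^n)}^2 = \int_{\R^n}(1+\abs{\eta+\lambda e}^2)^s \abs{\widehat{\varphi}(\eta)}^2 \der\eta,
\end{equation*}
and the claim is that this quantity is asymptotic to $\lambda^{2s}\norm{\varphi}_{L^2(\R^n)}^2$ as $\lambda\to\infty$ for every $s\in\R$, with the analogous statement for $v_\lambda$ at exponent $-t$. Inserting these asymptotics into \eqref{multiplier-inequality} produces
\begin{equation*}
\abs{\ip{f}{\varphi\psi}} = \abs{\ip{f}{u_\lambda v_\lambda}} \leq \norm{f}_{r,t}\norm{u_\lambda}_{H^r(\R^n)}\norm{v_\lambda}_{H^{-t}(\R^n)} \leq C(\varphi,\psi)\,\norm{f}_{r,t}\,\lambda^{r-t},
\end{equation*}
so letting $\lambda\to\infty$ forces $\ip{f}{\varphi\psi} = 0$ whenever $r<t$. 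Finally, for any $\chi \in C_c^\infty(\R^n)$ I pick $\widetilde\psi \in C_c^\infty(\R^n)$ identically $1$ on $\spt\chi$ and write $\chi = \chi\cdot\widetilde\psi$ to conclude $\ip{f}{\chi}=0$, i.e. $f = 0$ in $\mathcal{D}'(\R^n)$.

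The main obstacle I anticipate is the asymptotic $\norm{u_\lambda}_{H^s(\R^n)}^2 \sim \lambda^{2s}\norm{\varphi}_{L^2(\R^n)}^2$ in the regime $s<0$, where the weight $(1+\abs{\eta+\lambda e}^2)^s$ fails to be uniformly comparable to $\lambda^{2s}$ near the point $\eta = -\lambda e$. The natural remedy is to split the integral at $\abs{\eta} = \sqrt{\lambda}$: on the inner piece $\{\abs{\eta}\leq\sqrt{\lambda}\}$ the shift dominates and $\lambda^{-2s}(1+\abs{\eta+\lambda e}^2)^s \to 1$ pointwise with a uniform bound, so dominated convergence recovers the leading order $\lambda^{2s}\norm{\varphi}_{L^2(\R^n)}^2$; on the outer piece the Schwartz decay of $\widehat{\varphi}$ beats any polynomial and the tail is negligible compared to $\lambda^{2s}$. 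Apart from this technicality, the proof is a direct unpacking of the definition of the multiplier norm together with density of $C_c^\infty(\R^n)$ among the admissible test pairs.
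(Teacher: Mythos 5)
Your proofs of parts \ref{multiplier-lemma-item1} and \ref{multiplier-lemma-item2} coincide with the paper's: \ref{multiplier-lemma-item1} is the symmetry of the defining supremum under $u\leftrightarrow v$ together with $\norm{u}_{H^r}=\norm{u}_{H^{-(-r)}}$, and \ref{multiplier-lemma-item2} is the monotonicity of the Sobolev norms in the smoothness index fed into \eqref{multiplier-inequality}. For part \ref{multiplier-lemma-item3} you take a genuinely different route. The paper only proves the reductions: it handles the core case $0\leq r<t$ by citing \cite[Ch.~3]{MS-theory-of-sobolev-multipliers} (remarking that the argument there rests on "aptly chosen exponential functions"), and then uses parts \ref{multiplier-lemma-item1} and \ref{multiplier-lemma-item2} to transfer triviality to the remaining sign configurations of $r$ and $t$. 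You instead carry out the exponential-function argument in full: modulating $u_\lambda=e^{i\lambda e\cdot x}\varphi$, $v_\lambda=e^{-i\lambda e\cdot x}\psi$ keeps the product $\varphi\psi$ fixed while $\norm{u_\lambda}_{H^r}\sim c\lambda^{r}$ and $\norm{v_\lambda}_{H^{-t}}\sim c'\lambda^{-t}$, so \eqref{multiplier-inequality} forces $\ip{f}{\varphi\psi}=0$ when $r<t$, and the cutoff trick $\chi=\chi\widetilde\psi$ upgrades this to $f=0$ in $\mathcal D'(\R^n)$. Your treatment of the asymptotics (splitting at $\abs{\eta}=\sqrt{\lambda}$, using uniform convergence of the weight ratio on the inner region and Schwartz decay of $\widehat\varphi$ on the outer one) is exactly what is needed to cover negative exponents, and it makes the argument valid for all real $r<t$ in one stroke, with no case analysis and no external citation. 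The trade-off is purely one of self-containment versus brevity: the paper's version is shorter because it outsources the substantive step, while yours is complete and, as a bonus, renders the reductions via \ref{multiplier-lemma-item1}--\ref{multiplier-lemma-item2} unnecessary for this part. The only cosmetic caveat is that your test functions are complex-valued, which is consistent with the paper's Fourier-analytic conventions (and could in any case be handled by splitting into real and imaginary parts).
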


\begin{proof}\ref{multiplier-lemma-item1} Let $f\in \mathcal D'(\R^\dimens)$ be a distribution. Then by just using the definition we see that
    \begin{align}
        \|f\|_{r,t} & = \sup \{\abs{\ip{f}{u v}} \,;\, u,v \in C_c^\infty(\mathbb R^n), \norm{u}_{H^r(\R^n)} = \norm{v}_{H^{-t}(\R^n)} =1 \} \\ 
        & = \sup \{\abs{\ip{f}{v u}} \,;\, v,u \in C_c^\infty(\mathbb R^n), \norm{v}_{H^{-t}(\R^n)} = \norm{u}_{H^{-(-r)}(\R^n)} =1 \} = \|f\|_{-t,-r}.
    \end{align}
    
\noindent \ref{multiplier-lemma-item2} Observe that the given definition of $\|f\|_{r,t}$ is equivalent to the following:
\begin{align}
    \|f\|_{r,t} = \sup \{\abs{\ip{f}{u v}} \,;\, u,v \in C_c^\infty(\mathbb R^n), \norm{u}_{H^r(\R^n)} \leq 1, \norm{v}_{H^{-t}(\R^n)} \leq 1 \}.
\end{align}
\noindent Since $\lambda, \mu \geq 0$, we also have $$ \norm{u}_{H^{r-\lambda}(\R^n)} \leq \norm{u}_{H^r(\R^n)}, \quad \norm{v}_{H^{-(t+\mu)}(\R^n)} \leq \norm{v}_{H^{-t}(\R^n)}. $$
\noindent This implies $ \|f\|_{r,t} \leq \|f\|_{r-\lambda,t+\mu} $, which in turn gives the wanted inclusion.

\ref{multiplier-lemma-item3} If $0\leq r < t$, then this was considered in \cite[Ch. 3]{MS-theory-of-sobolev-multipliers}. The proof given there recalls the easier one for Sobolev spaces (\cite[Sec. 2.1]{MS-theory-of-sobolev-multipliers}), which is based on the explicit computation of derivatives of aptly chosen exponential functions.

If $r < t \leq 0$, then by point \ref{multiplier-lemma-item1} we have $M(H^r\rightarrow H^t) = M(H^{-t}\rightarrow H^{-r})$. We need to show that $M(H^{-t}\rightarrow H^{-r}) = \{0\}$ whenever $0\leq -t<-r$. This reduces the problem back to the case of non-negative Sobolev scales.

If $r \leq 0 < t$, then $-r \geq 0$. Now by point \ref{multiplier-lemma-item2}, we have $M(H^r\rightarrow H^t) \subseteq M(H^{r+(-r)}\rightarrow H^t) = M(L^2\rightarrow H^t) $. It is therefore enough to show that this last space is trivial, which again immediately follows from the case of non-negative Sobolev scales.

If $r < 0 \leq t$, then the problem can be reduced again to the earlier cases.
\end{proof}

\begin{remark}
We also have $M_0(H^{r-\lambda}\rightarrow H^{t+\mu}) \subseteq M_0(H^r\rightarrow H^t)$ whenever $\lambda, \mu\geq 0$, since the inclusion in~\ref{multiplier-lemma-item2} is continuous.
\end{remark}

\begin{remark}\label{why-exclude-higher-order-PDOs}
In light of lemma \ref{lemma-properties-of-multipliers}~\ref{multiplier-lemma-item2} we are only interested in $M(H^r\rightarrow H^t)$ in the case $r\geq t$, the case $r<t$ being trivial. For our theorem \ref{main-theorem-singular}, this translates into the condition $m \leq 2s$. We decided not to consider the limit case $m=2s$ in this work, as our machinery (in particular, the coercivity estimate \eqref{coercivity-3}) breaks down in this case. However, it should be noted that since by assumption we have $m\in\mathbb Z$ and $s\not\in\mathbb Z$, the equality $m=2s$ can only arise if $m$ is odd, which forces $s=1/2+k$ with $k\in \mathbb Z$. This case was excluded in \cite{CLR18, GSU20} as well.
\end{remark}

Propositions~\ref{lemma:generalcoefficients} and~\ref{lemma:lowerordercoefficients} relate our spaces of multipliers with some special Bessel potential spaces. This is interesting since in the coming section \ref{section-main-sigular} we will consider the inverse problem for coefficients coming from such spaces. We now prove those propositions.
%\begin{proposition}
%\label{lemma:generalcoefficients}
%Let $\Omega\subset\R^\dimens$ be an open set and let $t\in\R$ and $r\in\R$ be such that $t>\max\{0, r\}$. The following inclusions hold:
%\begin{enumerate}[(i)]
%    \item\label{item1} $\widetilde{H}^{r^\prime, \infty}(\Omega)\subset M_0(H^{-r}\rightarrow H^{-t})$ whenever $r^\prime\geq\max\{0, r\}$.
%    \medskip
%    \item\label{item2} $H^{r', \infty}_0(\Omega)\subset M_0(H^{-r}\rightarrow H^{-t})$ whenever $r'\geq\max\{0, r\}$ such that $r'\notin\{\frac{1}{2}, \frac{3}{2}, \frac{5}{2}, \dotso\}$ and $\Omega$ is a Lipschitz domain.
%    \medskip
%    \item\label{item3} $\widetilde{H}^{r'}(\Omega)\subset M_0(H^{-r}\rightarrow H^{-t})$ whenever $r'\geq t$ and $r'>n/2$. The same holds true for $H^{r'}_{\overline{\Omega}}(\R^\dimens)$ if $\Omega$ is a Lipschitz domain, and for $H^{r'}_0(\Omega)$ when $\Omega$ is a Lipschitz domain and $r'\notin\{\frac{1}{2}, \frac{3}{2}, \frac{5}{2}, \dotso\}$.
%    \medskip
%\end{enumerate}
%\end{proposition}

\begin{proof}[Proof of proposition \ref{lemma:generalcoefficients}]
Throughout the proof we assume that $u, v\in C_c^\infty(\R^\dimens)$ such that $\aabs{u}_{H^{-r}(\R^\dimens)}=\aabs{v}_{H^t(\R^\dimens)}=1$. In parts \ref{item1} and \ref{item2} we can assume that $r'<t$ since if $r'\geq t$, then we have the continuous inclusion $H^{r', \infty}(\Omega)\hookrightarrow H^{r'', \infty}(\Omega)$ where $\max\{0, r\}\leq r''<t$ (such $r''$ always exists since $t>\max\{0, r\}$).

\ref{item1} Let $f\in\widetilde{H}^{r', \infty}(\Omega)$. Now $f=f_1+f_2$ where $f_1\in C^\infty_c(\Omega)$ and $\aabs{f_2}_{H^{r', \infty}(\R^\dimens)}\leq\epsilon$. Then
\begin{align}
\abs{\ip{f_2}{uv}}\leq\aabs{f_2 v}_{H^{r'}(\R^\dimens)}\aabs{u}_{H^{-r'}(\R^\dimens)}&\leq C\aabs{f_2}_{H^{r', \infty}(\R^\dimens)}\aabs{v}_{H^{r'}(\R^\dimens)}\aabs{u}_{H^{-r}(\R^\dimens)} \\
&\leq C\epsilon\aabs{v}_{H^t(\R^\dimens)}=C\epsilon.
\end{align}
Here we used the Kato-Ponce inequality (lemma \ref{lemma:katoponce})
\begin{align}
\aabs{J^{r'}(f_2 v)}_{L^2(\R^n)} & \leq C (\aabs{f_2}_{L^\infty(\R^n)} \aabs{J^{r'} v}_{L^2(\R^n)} + \aabs{J^{r'} f_2}_{L^\infty(\R^n)} \aabs{v}_{L^2(\R^n)}) \\
& \leq C \aabs{f_2}_{H^{r',\infty}(\R^n)} \aabs{v}_{H^{r'}(\R^n)}
\end{align}
and the assumption $\max\{0, r\}\leq r'<t$. Therefore $\aabs{f-f_1}_{-r, -t}=\aabs{f_2}_{-r, -t}\leq C\epsilon$ which shows that $f\in M_0(H^{-r}\rightarrow H^{-t})$.

\ref{item2} Let $f\in H^{r', \infty}_0(\Omega)$. Now $f=f_1+f_2$ where $f_1\in C^\infty_c(\Omega)$ and $\aabs{f_2}_{H^{r', \infty}(\Omega)}\leq\epsilon$. By the definition of the quotient norm $\aabs{\cdot}_{H^{r', \infty}(\Omega)}$ we can take $F\in H^{r', \infty}(\R^\dimens)$ such that $F|_\Omega=f_2$ and $\aabs{F}_{H^{r', \infty}(\R^\dimens)}\leq 2\aabs{f_2}_{H^{r', \infty}(\Omega)}$. The assumptions imply the duality $(H^{-r'}(\Omega))^*=H^{r'}_0(\Omega)\subset H
^{r'}(\Omega)$. Using the Kato-Ponce inequality for the extension $F$ we obtain as in the proof of part \ref{item1} that
\begin{align}
\aabs{J^{r'}(F v)}_{L^2(\R^n)}&\leq C \aabs{F}_{H^{r',\infty}(\R^n)} \aabs{v}_{H^{r'}(\R^n)}\leq 2C\aabs{f_2}_{H^{r', \infty}(\Omega)}\aabs{v}_{H^t(\R^n)}\leq 2C\epsilon
\end{align}
and hence
\begin{align}
\abs{\ip{f_2}{uv}}&\leq\aabs{f_2 v}_{(H^{-r'}(\Omega))^*}\aabs{u}_{H^{-r'}(\Omega)}\leq\aabs{f_2 v}_{H^{r'}(\Omega)}\aabs{u}_{H^{-r}(\R^\dimens)} \\&\leq \aabs{J^{r'}(F v)}_{L^2(\R^n)}\leq 2C\epsilon.
\end{align}
This shows that $f\in M_0(H^{-r}\rightarrow H^{-t})$.

\ref{item3} Let $f\in\widetilde{H}^{r'}(\Omega)$. Now $f=f_1+f_2$ where $f_1\in C^\infty_c(\Omega)$ and $\aabs{f_2}_{H^{r'}(\R^\dimens)}\leq\epsilon$. Now~\cite[Theorem 7.3]{BH2017} implies the continuity of the multiplication $H^{r'}(\R^\dimens)\times H^t(\R^\dimens)\hookrightarrow H^t(\R^\dimens)$ when $r'\geq t$ and $r'>n/2$. We obtain 
\begin{align}
\abs{\ip{f_2}{uv}}\leq\aabs{f_2 v}_{H^{t}(\R^\dimens)}\aabs{u}_{H^{-t}(\R^\dimens)}&\leq C\aabs{f_2}_{H^{r'}(\R^\dimens)}\aabs{v}_{H^t(\R^\dimens)}\aabs{u}_{H^{-r}(\R^\dimens)} \leq C\epsilon.
\end{align}
Hence $f\in M_0(H^{-r}\rightarrow H^{-t})$. If $\Omega$ is a Lipschitz domain, then $H^{r'}_{\overline{\Omega}}(\R^\dimens)=\widetilde{H}^{r'}(\Omega)$. If in addition $r'\notin\{\frac{1}{2}, \frac{3}{2}, \frac{5}{2}, \dotso\}$, we also have $H^{r'}_0(\Omega)=\widetilde{H}^{r'}(\Omega)$.
\end{proof}
%\begin{proposition}
%\label{lemma:lowerordercoefficients}
%Let $\Omega\subset\R^\dimens$ be an open set and $t>0$. The following inclusions hold:
%\begin{enumerate}[(i)]
% \item\label{item4} $L^p(\Omega)\subset M_0(H^0\rightarrow H^{-t})$ whenever $2\leq p<\infty$ and $p>n/t$. Especially, if $\Omega$ is bounded, then $L^\infty (\Omega)\subset M_0(H^0\rightarrow H^{-t})$.
%    \medskip
%    \item\label{item5} $\widetilde{H}^r(\Omega)\subset M_0(H^0\rightarrow H^{-t})$ whenever $r\geq 0$ and $r>n/2-t$. The same holds true for $H^r_{\overline{\Omega}}(\R^\dimens)$ if $\Omega$ is a Lipschitz domain, and for $H^r_0(\Omega)$ when $\Omega$ is Lipschitz domain and $r\notin\{\frac{1}{2}, \frac{3}{2}, \frac{5}{2}, \dotso\}$.
%    \end{enumerate}
%\end{proposition}

\begin{proof}[Proof of proposition \ref{lemma:lowerordercoefficients}]
Throughout the proof we assume that $u, v\in C_c^\infty(\R^\dimens)$ such that $\aabs{u}_{L^2(\R^\dimens)}=\aabs{v}_{H^t(\R^\dimens)}=1$.

\ref{item4} Let $f\in L^p(\Omega)$. By density of $C^\infty_c(\Omega)$ in $L^p(\Omega)$ we have $f=f_1+f_2$ where $f_1\in C^\infty_c(\Omega)$ and $\aabs{\widetilde{f}_2}_{L^p(\R^\dimens)}\leq\epsilon$ where $\widetilde{f}_2$ is the zero extension of $f_2\in L^p(\Omega)$. The assumptions on $p$ imply the continuity of the multiplication $L^p(\R^\dimens)\times H^t(\R^\dimens)\hookrightarrow L^2(\R^\dimens)$ (\cite[Theorem 7.3]{BH2017}) and we have 
\begin{align}
\abs{\ip{\widetilde{f}_2}{uv}}\leq\aabs{\widetilde{f}_2 v}_{L^2(\R^\dimens)}\aabs{u}_{L^2(\R^\dimens)}&\leq C\aabs{\widetilde{f}_2}_{L^p(\R^\dimens)}\aabs{v}_{H^t(\R^\dimens)}\leq C\epsilon.
\end{align}
This gives that $f\in M_0(H^0\rightarrow H^{-t})$. If $\Omega$ is bounded, we have $L^\infty(\Omega)\hookrightarrow L^p(\Omega)$ for all $1\leq p<\infty$, giving the second claim.

\ref{item5} Let $f\in \widetilde{H}^r(\Omega)$. Now we have $f=f_1+f_2$ where $f_1\in C^\infty_c(\Omega)$ and $\aabs{f_2}_{H^r(\R^\dimens)}\leq\epsilon$. The assumptions on $r$ imply that the multiplication $H^r(\R^\dimens)\times H^t(\R^\dimens)\hookrightarrow L^2(\R^\dimens)$ is continuous (\cite[Theorem 7.3]{BH2017}). We obtain
\begin{align}
\abs{\ip{f_2}{uv}}\leq\aabs{f_2 v}_{L^2(\R^\dimens)}\aabs{u}_{L^2(\R^\dimens)}&\leq C\aabs{f_2}_{H^r(\R^\dimens)}\aabs{v}_{H^t(\R^\dimens)}\leq C\epsilon
\end{align}
and therefore $f\in M_0(H^0\rightarrow H^{-t})$. The claims for $H^r_{\overline{\Omega}}(\R^\dimens)$ and $H^r_0(\Omega)$ follow as in the proof of part \ref{item3} of lemma \ref{lemma:generalcoefficients} from the usual identifications for Lipschtiz domains.
\end{proof}

\section{Main theorem for singular coefficients}\label{section-main-sigular}
In this section, to shorten the notation, we will write $\aabs{\cdot}_{H^s}$, $\aabs{\cdot}_{L^2}$ and so on for the global norms in $\R^\dimens$ when the base set is not written explicitly.

\subsection{Well-posedness of the forward problem}\label{subsec-WP-singular}

Consider the problem
\begin{align}\label{problem}
(-\Delta)^s u + \sum_{|\alpha|\leq m} a_\alpha(D^\alpha u) & = F \quad \mbox{in} \;\;\Omega, \\
u & = f  \quad \mbox{in} \;\;\Omega_e 
\end{align}
and the corresponding \emph{adjoint-problem}
\begin{align}\label{adjoint-problem}
(-\Delta)^s u^* + \sum_{|\alpha|\leq m} (-1)^{|\alpha|} D^\alpha (a_\alpha u^*) & = F^* \quad \mbox{in} \;\;\Omega, \\
u^* & = f^*  \quad \mbox{in} \;\;\Omega_e .
\end{align}
Note that if $u, u^*\in H^s(\R^\dimens)$ and $a_\alpha\in M(H^{s-\abs{\alpha}}\rightarrow H^{-s})=M(H^s\rightarrow H^{\abs{\alpha}-s})$, then $a_\alpha(D^\alpha u)\in H^{-s}(\R^\dimens)$ and $D^\alpha(a_\alpha u^*)\in H^{-s}(\R^\dimens)$ matching with $\fraclaplace u, \fraclaplace u^*\in H^{-s}(\R^\dimens)$.

The problems \eqref{problem} and \eqref{adjoint-problem} are associated to the bilinear forms 
\begin{align}\label{bilinear}
B_P(v,w) := \langle (-\Delta)^{s/2}v, (-\Delta)^{s/2}w \rangle + \sum_{|\alpha|\leq m} \langle a_\alpha ,(D^\alpha v) w \rangle
\end{align}
and
\begin{align}\label{adjoint-bilinear}
B^*_P(v,w) := \langle (-\Delta)^{s/2}v, (-\Delta)^{s/2}w \rangle + \sum_{|\alpha|\leq m} \langle a_\alpha, v(D^\alpha w) \rangle,
\end{align}
defined on $v,w \in C^\infty_c(\R^\dimens)$.

\begin{remark}
Observe that $B_P$ is not symmetric, which motivates the introduction of the bilinear form $B_P^*$. Moreover, one sees by simple inspection that $B_P(v,w) = B_P^*(w,v)$ for all $v,w \in C^\infty_c(\R^\dimens)$. This identity holds for $v,w \in H^s(\R^\dimens)$ as well by density, thanks to the following boundedness lemma.
\end{remark}

\begin{lemma}[Boundedness of the bilinear forms]\label{boundedness-bilinear}
Let $s \in \mathbb R^+ \setminus \mathbb Z$ and $m\in \mathbb N$ such that $2s \geq m$, and let $a_{\alpha} \in M(H^{s-|\alpha|}\rightarrow H^{-s})$. Then $B_P$ and $B_P^*$ extend as bounded bilinear forms on $H^s(\mathbb R^n)\times H^s(\mathbb R^n)$.
\end{lemma}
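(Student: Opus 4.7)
The plan is to reduce each piece of the bilinear forms to a product estimate and then extend from $C_c^\infty \times C_c^\infty$ to $H^s \times H^s$ by density.

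First I would handle the leading term. By Plancherel's theorem and the Fourier-multiplier definition of the fractional Laplacian,
\[\bigl|\langle (-\Delta)^{s/2}v,(-\Delta)^{s/2}w\rangle\bigr| \leq \|(-\Delta)^{s/2}v\|_{L^2}\|(-\Delta)^{s/2}w\|_{L^2}\leq \|v\|_{H^s}\|w\|_{H^s},\]
because $(-\Delta)^{s/2}\colon H^s(\mathbb R^n)\to L^2(\mathbb R^n)$ is bounded (Section \ref{subsec-sobolev}).

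Next, for each multi-index $\alpha$ with $|\alpha|\leq m$ I would use the fact that $D^\alpha\colon H^s(\mathbb R^n)\to H^{s-|\alpha|}(\mathbb R^n)$ is continuous (a standard Fourier-multiplier bound), so for $v,w\in C_c^\infty(\mathbb R^n)$ the pointwise product $(D^\alpha v)w$ is a smooth compactly supported function and in particular lies in the test class appearing in the definition of the multiplier norm $\|\cdot\|_{s-|\alpha|,-s}$. The condition $2s\geq m$ ensures $s-|\alpha|\geq -s$, so by Lemma \ref{lemma-properties-of-multipliers} the multiplier space $M(H^{s-|\alpha|}\to H^{-s})$ is a sensible object. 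Applying the multiplier inequality \eqref{multiplier-inequality} with $r=s-|\alpha|$, $t=-s$ yields
\[\bigl|\langle a_\alpha,(D^\alpha v)w\rangle\bigr|\leq \|a_\alpha\|_{s-|\alpha|,-s}\,\|D^\alpha v\|_{H^{s-|\alpha|}}\,\|w\|_{H^s}\leq C_\alpha\|v\|_{H^s}\|w\|_{H^s}.\]
Summing over $|\alpha|\leq m$ (a finite sum) gives $|B_P(v,w)|\leq C\|v\|_{H^s}\|w\|_{H^s}$ on the dense subspace $C_c^\infty(\mathbb R^n)\times C_c^\infty(\mathbb R^n)$. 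For $B_P^*$ the argument is literally the same, writing the product as $v(D^\alpha w)$ and applying \eqref{multiplier-inequality} with the roles of the two factors exchanged; here the symmetry $M(H^r\to H^t)=M(H^{-t}\to H^{-r})$ from Lemma \ref{lemma-properties-of-multipliers}\ref{multiplier-lemma-item1} is what makes the bound go through with the same norm.

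Finally I would extend both forms by density. Since $C_c^\infty(\mathbb R^n)$ is dense in $H^s(\mathbb R^n)$ and $B_P$, $B_P^*$ are bounded bilinear functionals on the dense subspace, they extend uniquely to continuous bilinear forms on $H^s(\mathbb R^n)\times H^s(\mathbb R^n)$ with the same bound. The identity $B_P(v,w)=B_P^*(w,v)$ holds on $C_c^\infty\times C_c^\infty$ by inspection and therefore extends to $H^s\times H^s$ by continuity.

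I do not expect a real obstacle here; the only subtle point is that the multiplier pairing $\langle a_\alpha,\varphi\psi\rangle$ is defined through the unique continuous extension built in Section \ref{subsec-rough}, so one must be careful to interpret $\langle a_\alpha,(D^\alpha v)w\rangle$ in the extended sense (i.e.\ via $\langle m_{a_\alpha}(D^\alpha v),w\rangle$ or its adjoint) once $v,w$ are only in $H^s$. This is automatic from the construction of $m_{a_\alpha}$ and $m_{a_\alpha}^*$ performed earlier in the paper.
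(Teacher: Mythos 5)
Your proposal is correct and follows essentially the same route as the paper: bound the leading term by Plancherel, bound each perturbation term by the multiplier inequality \eqref{multiplier-inequality} applied to $(D^\alpha v)w$ together with the boundedness of $D^\alpha\colon H^s\to H^{s-|\alpha|}$, sum the finitely many terms, and extend by density of $C_c^\infty(\R^n)$ in $H^s(\R^n)$. The extra remarks on $B_P^*$ and on interpreting the pairing via $m_{a_\alpha}$, $m_{a_\alpha}^*$ are consistent with what the paper leaves implicit.
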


\begin{proof}%[Proof of lemma \ref{boundedness-bilinear}]
We only prove the boundedness of $B_P$, as for $B_P^*$ one can proceed in the same way. The proof is a simple calculation following from inequality \eqref{multiplier-inequality}. Let $u, v \in C_c^\infty(\R^n)$. We can then estimate that
\begin{align}
|B_P(v,w)| & \leq |\langle (-\Delta)^{s/2}v, (-\Delta)^{s/2}w \rangle | + \sum_{|\alpha|\leq m} |\langle a_\alpha , (D^\alpha v) w \rangle| \\
& \leq \|w\|_{H^s(\mathbb R^n)} \|v\|_{H^s(\mathbb R^n)} + \sum_{|\alpha|\leq m}  \|a_\alpha\|_{s-|\alpha|,-s} \|D^\alpha v\|_{H^{s-|\alpha|}(\mathbb R^n)}\|w\|_{H^s(\mathbb R^n)} \\ & 
\leq \left( 1+ \sum_{|\alpha|\leq m}  \|a_\alpha\|_{s-|\alpha|,-s}\right) \|w\|_{H^s(\mathbb R^n)} \|v\|_{H^s(\mathbb R^n)}.
\end{align}
Now the claim follows from the density of $C_c^\infty(\R^n)$ in $H^s(\R^n)$.
\end{proof}

Next we shall define the concept of weak solution to problems \eqref{problem} and \eqref{adjoint-problem}:
\begin{definition}[Weak solutions]
Let $f,f^* \in H^s(\mathbb R^n)$ and $F, F^* \in (\widetilde{H}^{s}(\Omega))^*$. We say that $u\in H^s(\mathbb R^n)$ is a weak solution to \eqref{problem} when $u-f \in \widetilde H^s(\Omega)$ and $B_P(u,v)=F(v)$ for all $v\in \widetilde H^s(\Omega)$. Similarly, we say that $u^*\in H^s(\mathbb R^n)$ is a weak solution to \eqref{adjoint-problem} when $u^*-f^* \in \widetilde H^s(\Omega)$ and $B^*_P(u^*,v)=F^*(v)$ for all $v\in \widetilde H^s(\Omega)$.
\end{definition}

In order to prove the existence and uniqueness of weak solutions, we use the following form of Young's inequality, which holds for all $a,b,\eta \in \mathbb R^+$ and $p,q \in (1,\infty)$ such that $1/p+1/q=1$: 
\begin{align}\label{young-general}
    ab \leq \frac{(q\eta)^{-p/q}}{p}a^p + \eta b^q.
\end{align}
The validity of \eqref{young-general} is easily proved by choosing $a_1=a(q\eta)^{-1/q}$ and $b_1 = b(q\eta)^{1/q}$ in Young's inequality  $a_1b_1 \leq a_1^p/p + b_1^q/q$. 

\begin{lemma}[Well-posedness]\label{well-posedness}
Let $\Omega \subset \mathbb R^n$ be a bounded open set. Let $s \in \mathbb R^+ \setminus \mathbb Z$ and $m\in \mathbb N$ be such that $2s > m$, and let $a_{\alpha} \in M_0(H^{s-|\alpha|}\rightarrow H^{-s})$. There exist a real number $\mu > 0$ and a countable set $\Sigma \subset (-\mu, \infty)$ of eigenvalues $\lambda_1 \leq \lambda_2 \leq ... \rightarrow \infty$ such that if $\lambda \in \mathbb R \setminus \Sigma$, for any $f\in H^s(\mathbb R^n)$ and $F\in (\widetilde{H}
^s(\Omega))^*$ there exists a unique $u\in H^s(\mathbb R^n)$ such that $u-f \in \widetilde H^s(\Omega)$ and
$$ B_P(u,v)-\lambda \langle u,v \rangle = F(v) \quad \mbox{for all} \quad v \in \widetilde H^s(\Omega).$$
One has the estimate 
$$ \|u\|_{H^s(\mathbb R^n)} \leq C\left( \|f\|_{H^s(\mathbb R^n)} + \|F\|_{(\widetilde{H}^s(\Omega))^*} \right). $$
The function $u$ is also the unique $u\in H^s(\mathbb R^n)$ satisfying $$r_\Omega \left( (-\Delta)^s + \sum_{|\alpha|\leq m} a_\alpha D^\alpha  -\lambda \right)u=F$$ in the sense of distributions in $\Omega$ and $u-f \in \widetilde H^s(\Omega)$. Moreover, if \eqref{eigenvalue-condition-1} holds then $0\notin \Sigma$.
\end{lemma}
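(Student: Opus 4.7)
The plan is to implement the standard Fredholm-alternative strategy for elliptic variational problems; the core analytic step is a G{\aa}rding-type coercivity estimate that genuinely exploits both $2s>m$ and the $M_0$ hypothesis on the $a_\alpha$. I would first reduce to the case $f=0$ by writing $u=w+f$ with $w\in\widetilde{H}^s(\Omega)$ the new unknown. This yields the equivalent problem
\[
B_P(w,v)-\lambda\langle w,v\rangle = F(v)-B_P(f,v)+\lambda\langle f,v\rangle =: \tilde F(v) \quad \text{for all } v\in\widetilde H^s(\Omega),
\]
and Lemma \ref{boundedness-bilinear} guarantees $\tilde F\in(\widetilde H^s(\Omega))^*$ with norm controlled by $\aabs{f}_{H^s}+\aabs{F}_{(\widetilde H^s(\Omega))^*}$. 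Any estimate on $w$ will then transfer to one on $u$.

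The main step is a G{\aa}rding inequality: there exist $\mu,c>0$ such that $B_P(v,v)+\mu\aabs{v}_{L^2}^2 \geq c\aabs{v}_{H^s}^2$ for every $v\in\widetilde H^s(\Omega)$. The fractional Poincar\'e inequality (Lemma \ref{lemma:fractionalpoincareinequliaty}) reduces this to bounding each $|\langle a_\alpha,(D^\alpha v)v\rangle|$ by a small multiple of $\aabs{v}_{H^s}^2$ plus a large multiple of $\aabs{v}_{L^2}^2$. Using $a_\alpha\in M_0(H^{s-|\alpha|}\to H^{-s})$ I would split $a_\alpha=a_\alpha^{(1)}+a_\alpha^{(2)}$ with $a_\alpha^{(1)}\in C^\infty_c(\R^\dimens)$ and $\aabs{a_\alpha^{(2)}}_{s-|\alpha|,-s}<\epsilon$. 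The singular part directly yields $|\langle a_\alpha^{(2)},(D^\alpha v)v\rangle|\leq \epsilon\aabs{v}_{H^s}^2$ by \eqref{multiplier-inequality}, which is absorbable into the principal term. For the smooth part, dualising as $\langle D^\alpha v,\, a_\alpha^{(1)}v\rangle$ and using that multiplication by $a_\alpha^{(1)}$ preserves every Sobolev scale gives the bound $C\aabs{v}_{H^s}\aabs{v}_{H^{|\alpha|-s}}$; since $|\alpha|-s<s$ thanks to the strict inequality $m<2s$, Sobolev interpolation produces $\aabs{v}_{H^{|\alpha|-s}}\leq C\aabs{v}_{L^2}^{\sigma}\aabs{v}_{H^s}^{1-\sigma}$ for some $\sigma>0$, and Young's inequality \eqref{young-general} converts this into $\eta\aabs{v}_{H^s}^2+C(\eta)\aabs{v}_{L^2}^2$. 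Summing over $\alpha$ and choosing $\epsilon,\eta$ small and $\mu$ correspondingly large closes the estimate. This coercivity step is the main technical obstacle, and it is precisely where the strict inequality $m<2s$ is needed (compare Remark \ref{why-exclude-higher-order-PDOs}).

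Given the G{\aa}rding inequality, Lax--Milgram applied to $B_P(\cdot,\cdot)+\mu\langle\cdot,\cdot\rangle$ on $\widetilde H^s(\Omega)$ produces a bounded inverse $G:(\widetilde H^s(\Omega))^*\to\widetilde H^s(\Omega)$. Composing $G$ with the compact inclusion $\widetilde H^s(\Omega)\hookrightarrow L^2(\Omega)$, valid because $\Omega$ is bounded, recasts the problem for arbitrary $\lambda$ as $(I-(\lambda+\mu)K)w = G\tilde F$ for a compact operator $K$ on $\widetilde H^s(\Omega)$. The spectral theorem for compact operators then yields a discrete set $\Sigma\subset(-\mu,\infty)$ of eigenvalues accumulating only at $+\infty$, and the Fredholm alternative --- applied jointly to $B_P$ and to the adjoint form $B_P^*$ in order to handle the non-symmetry --- provides existence, uniqueness and the quantitative estimate $\aabs{w}_{H^s}\leq C\aabs{\tilde F}_{(\widetilde H^s(\Omega))^*}$ for every $\lambda\notin\Sigma$, which translates via Lemma \ref{boundedness-bilinear} into the stated bound on $\aabs{u}_{H^s}$. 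The equivalence with the distributional formulation on $\Omega$ follows by testing against $C^\infty_c(\Omega)\subset\widetilde H^s(\Omega)$, and the hypothesis that $0$ is not a Dirichlet eigenvalue of $(-\Delta)^s+P(x,D)$ is precisely the statement that the kernel at $\lambda=0$ is trivial, forcing $0\notin\Sigma$.
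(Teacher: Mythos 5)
Your proposal is correct and follows essentially the same route as the paper: the same reduction to $\tilde F$, the same $M_0$-splitting with the multiplier inequality absorbing the small remainder, the same interpolation-plus-Young treatment of the smooth part (where $m<2s$ enters), the fractional Poincar\'e inequality for the G{\aa}rding estimate, and Lax--Milgram plus compactness and the Fredholm alternative for the spectral conclusion. The only cosmetic difference is that you set up the compact operator on $\widetilde H^s(\Omega)$ rather than on $L^2(\Omega)$ as the paper does, which is an equivalent formulation.
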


\begin{proof}
Let $\tilde u := u-f$. The above problem is reduced to finding a unique $\tilde u \in \widetilde H^s(\Omega)$ such that $B_P(\tilde u, v) - \lambda \langle \tilde u,v \rangle = \tilde F(v)$, where $\tilde F := F - B_P(f,\cdot) + \lambda\langle f,\cdot\rangle$. Observe that the modified functional $\tilde F$ belongs to $(\widetilde H^s(\Omega))^*$ as well, since by lemma \ref{boundedness-bilinear} we have for all $v\in \widetilde H^s(\Omega)$
$$|\tilde F(v)| \leq |F(v)| + |B_P(f,v)| + |\lambda| \,|\langle f,v\rangle| \leq (\aabs{F}_{(\widetilde{H}^s(\Omega))^*}+(C+|\lambda|)\|f\|_{H^s(\mathbb R^n)})\|v\|_{H^s(\mathbb R^n)}.$$

Since $a_{\alpha} \in M_0(H^{s-|\alpha|}\rightarrow H^{-s})$, for any $\epsilon > 0$ we can write $a_\alpha = a_{\alpha,1}+ a_{\alpha,2}$, where $a_{\alpha,1} \in C^\infty_c(\mathbb R^n) \cap M(H^{s-\abs{\alpha}} \to H^{-s})$ and $\|a_{\alpha,2}\|_{s-|\alpha|,-s}< \epsilon$. Thus by formula \eqref{multiplier-inequality}, the continuity of the multiplication $H^r(\R^\dimens)\times H^s(\R^\dimens)\hookrightarrow H
^s(\R^\dimens)$ for large enough $r\in\R$ (see \cite[Theorem 7.3]{BH2017}) and the fact that $a_{\alpha, 1}\in C^\infty_c(\R^\dimens)\subset H^r(\R^\dimens)$ for all $r\in\R$ we obtain

\begin{align}\label{rough-WP-1}
    |\langle a_\alpha, (D^\alpha v) w \rangle| & \leq |\langle a_{\alpha,1},(D^\alpha v) w \rangle| +  |\langle a_{\alpha,2}, (D^\alpha v) w \rangle| \\ 
    & \leq \|a_{\alpha,1}\|_{H^r(\R^n)}\|D^\alpha v\|_{H^{-s}(\R^n)} \|w\|_{H^s(\R^n)} +  \|a_{\alpha,2}\|_{s-|\alpha|,-s} \|D^\alpha v\|_{H^{s-|\alpha|}(\R^n)} \|w\|_{H^s(\R^n)} \\
    & \leq c \|w\|_{H^s(\R^n)} \left( \|a_{\alpha,1}\|_{H^r(\R^n)}\|v\|_{H^{|\alpha|-s}(\R^n)} +  \epsilon \|v\|_{H^s(\R^n)} \right)
\end{align}
where $r\in\R$ is large enough ($r>\max\{s, n/2\}$ is sufficient). If $|\alpha| < s$, from formulas \eqref{rough-WP-1} and \eqref{young-general} with $p=q=2$ we get directly 
\begin{align}\label{rough-WP-2} 
|\langle a_\alpha, (D^\alpha v) v \rangle| & \leq C\left( \|v\|_{H^s(\R^n)} \|v\|_{L^2(\R^n)} +  \epsilon \|v\|_{H^s(\R^n)}^2\right) \\ 
& \leq C (\epsilon^{-1}\|v\|_{L^2(\R^n)}^2 + \epsilon \|v\|_{H^s(\R^n)}^2 )  
\end{align}

\noindent for a constant $C$ independent of $v,w,\epsilon$. If instead $|\alpha| > s$ (observe that we can not have $|\alpha|=s$, because $s$ can not be an integer), we use the interpolation inequality
$$ \|v\|_{H^{|\alpha|-s}(\mathbb R^n)} \leq C \|v\|_{L^2(\mathbb R^n)}^{1-(|\alpha|-s)/s} \|v\|_{H^s(\mathbb R^n)}^{(|\alpha|-s)/s} = C \|v\|_{L^2(\mathbb R^n)}^{2-|\alpha|/s} \|v\|_{H^s(\mathbb R^n)}^{|\alpha|/s-1}$$

\noindent in order to get
\begin{align}
    |\langle a_\alpha, (D^\alpha v) w \rangle| & \leq C \|w\|_{H^s(\R^n)} \left( \|v\|_{L^2(\R^n)}^{2-|\alpha|/s} \|v\|_{H^s(\R^n)}^{|\alpha|/s-1}+  \epsilon \|v\|_{H^s(\R^n)} \right).
\end{align}
Then by formula \eqref{young-general} with
$$ a= \|v\|_{L^2(\R^n)}^{2-|\alpha|/s}, \quad b= \|v\|_{H^s(\R^n)}^{|\alpha|/s-1}, \quad p=\frac{s}{2s-|\alpha|}, \quad q= \frac{s}{|\alpha|-s}, \quad \eta=\epsilon$$
we obtain
\begin{align}\label{rough-WP-3}
    |\langle a_\alpha, (D^\alpha v) w \rangle| & \leq C \|w\|_{H^s(\R^n)} \left( \epsilon^{\frac{s-|\alpha|}{2s-|\alpha|}}\|v\|_{L^2(\R^n)} + \epsilon \|v\|_{H^s(\R^n)} \right)
\end{align}
\noindent for a constant $C$ independent of $v,w,\epsilon$. Now we use formula \eqref{young-general} again, but this time we choose
$$a= \|v\|_{L^2(\R^n)}, \quad b= \|v\|_{H^s(\R^n)}, \quad q=p=2, \quad \eta = \epsilon^{s/(2s-|\alpha|)}.$$ 
This leads to
\begin{align}\label{rough-WP-4}
     |\langle a_\alpha, (D^\alpha v) v \rangle| & \leq C \left( \epsilon^{\frac{s-|\alpha|}{2s-|\alpha|}}\|v\|_{L^2(\R^n)} \|v\|_{H^s(\R^n)} + \epsilon \|v\|_{H^s(\R^n)}^2\right) \\ 
     & \leq C \left( \epsilon^{\frac{-|\alpha|}{2s-|\alpha|}}\|v\|_{L^2(\R^n)}^2 + 2\epsilon \|v\|_{H^s(\R^n)}^2\right) \\ 
     & \leq C \left( \epsilon^{\frac{-|\alpha|}{2s-|\alpha|}}\|v\|_{L^2(\R^n)}^2 + \epsilon \|v\|_{H^s(\R^n)}^2\right) \\ 
     & \leq C' \left( \epsilon^{\frac{-m}{2s-m}}\|v\|_{L^2(\R^n)}^2 + \epsilon \|v\|_{H^s(\R^n)}^2\right)
\end{align}

\noindent where $C, C'$ are constants changing from line to line. Observe that $C'$ can be taken independent of $\alpha$. Eventually, using \eqref{rough-WP-2} and \eqref{rough-WP-4} we get
\begin{align}\label{coercivity-1}
B_P(v,v) & \geq \|(-\Delta)^{s/2}v\|^2_{L^2(\R^n)} - \sum_{|\alpha|\leq m} | \langle a_\alpha, (D^\alpha v) v \rangle | \\ 
& \geq  \|(-\Delta)^{s/2}v\|^2_{L^2(\R^n)} -  C' \left( (\epsilon^{\frac{-m}{2s-m}}+\epsilon^{-1})\|v\|_{L^2(\R^n)}^2 + \epsilon \|v\|_{H^s(\R^n)}^2\right).
\end{align}
\noindent By the higher order Poincar\'e inequality (lemma \ref{lemma:fractionalpoincareinequliaty}) \eqref{coercivity-1} turns into
\begin{align}
B_P(v,v) & \geq c \left(\|(-\Delta)^{s/2}v\|^2_{L^2(\R^n)}+ \|v\|_{L^2(\R^n)}^2\right)  - C' \left( (\epsilon^{\frac{-m}{2s-m}}+\epsilon^{-1})\|v\|_{L^2(\R^n)}^2 + \epsilon \|v\|_{H^s(\R^n)}^2\right) \\ 
& \geq c \|v\|_{H^s(\R^n)}^2 - C' \left( (\epsilon^{\frac{-m}{2s-m}}+\epsilon^{-1})\|v\|_{L^2(\R^n)}^2 + \epsilon \|v\|_{H^s(\R^n)}^2\right) 
\end{align}

\noindent for some constant $c=c(\Omega,n,s)$ changing from line to line. For $\epsilon$ small enough, this eventually gives the coercivity estimate
\begin{equation}\label{coercivity-3}
B_P(v,v) \geq c_0 \|v\|_{H^s(\mathbb R^n)}^2 - \mu\|v\|_{L^2(\mathbb R^n)}^2
\end{equation}
\noindent for some constants $c_0, \mu > 0$ independent of $v$.

 As a consequence of the coercivity estimate, the bilinear form $B_P(\cdot,\cdot) + \mu\langle\cdot,\cdot\rangle_{L^2(\mathbb R^n)}$ satisfies the assumptions of the Lax--Milgram theorem, and there exists a bounded linear operator $G_\mu: (\widetilde{H}^{s}(\Omega))^* \rightarrow \widetilde H^s(\Omega)$ associating each functional in $(\widetilde{H}^{s}(\Omega))^*$ to its unique representative in the bilinear form $B_P(\cdot,\cdot) + \mu\langle\cdot,\cdot\rangle_{L^2(\mathbb R^n)}$ on $\widetilde H^s(\Omega)$. Thus $\tilde u := G_\mu \tilde F$ verifies $$ B_P(\tilde u,v) + \mu\langle\tilde u,v\rangle_{L^2(\mathbb R^n)} = \tilde F(v)\quad \mbox{for all}\quad v\in \widetilde H^s(\Omega) $$
and it is the required unique solution $\tilde u \in \widetilde H^s(\Omega)$. Moreover, $G_\mu$ induces a compact operator $\tilde G_\mu : L^2(\Omega) \rightarrow L^2(\Omega)$ by the compact Sobolev embedding theorem. The remaining claims follow from the spectral theorem of compact operators for $\tilde G_\mu$ and from the Fredholm alternative as in \cite{GSU20}.
\end{proof}

By the above lemma \ref{well-posedness}, both problems \eqref{problem} and \eqref{adjoint-problem} have a countable set of Dirichlet eigenvalues. Throughout the paper we will assume that the coefficients $a_\alpha$ are such that $0$ is not a Dirichlet eigenvalue for either of the problems. That is, we assume that
\begin{align}\label{eigenvalue-condition-1}
    \Bigg\{ \begin{matrix*} \mbox{if $u \in H^s(\mathbb R^n)$ solves $(-\Delta)^s u + \sum_{|\alpha|\leq m} a_\alpha D^\alpha u = 0$ in $\Omega$ and $u|_{\Omega_e}=0$,}
    \\
    \mbox{then $u\equiv 0$}
    \end{matrix*} \Bigg. 
    \end{align}
    and 
    \begin{align}\label{eigenvalue-condition-2}
    \Bigg\{ \begin{matrix*} \mbox{if $u^* \in H^s(\mathbb R^n)$ solves $(-\Delta)^s u^* + \sum_{|\alpha|\leq m} (-1)^{|\alpha|} D^\alpha (a_\alpha  u^*) = 0$ in $\Omega$ and $u^*|_{\Omega_e}=0$,}
    \\
    \mbox{then $u^*\equiv 0$.}
    \end{matrix*} \Bigg.
\end{align}

With this in mind, we shall define the exterior DN maps associated to the problems \eqref{problem} and \eqref{adjoint-problem}. Consider the abstract trace space $X := H^s(\mathbb R^n)/\widetilde H^s(\Omega)$ equipped with the quotient norm $$\|[f]\|_X := \inf_{\phi\in\widetilde H^s(\Omega)} \|f-\phi\|_{H^s(\mathbb R^n)}, \quad f\in H^s(\mathbb R^n)$$ 
and its dual space $X^*$. 

\begin{definition}
Let $\Omega \subset \mathbb R^n$ be a bounded open set. Let $s \in \mathbb R^+ \setminus \mathbb Z$ and $m\in \mathbb N$ such that $2s> m$, and let $a_{\alpha} \in M_0(H^{s-|\alpha|}\rightarrow H^{-s})$. The exterior DN maps $\Lambda_P$ and $\Lambda_P^*$ are  $$ \Lambda_P : X\rightarrow X^* \quad \mbox{defined by}\quad \langle \Lambda_P[f],[g] \rangle := B_P(u_f, g)$$
and 
$$ \Lambda^*_P : X\rightarrow X^* \quad \mbox{defined by}\quad \langle \Lambda^*_P[f],[g] \rangle := B^*_P(u^*_f, g)$$
where $u_f, u^*_f$ are the unique solutions to the equations
 \begin{align*}
(-\Delta)^s u + \sum_{|\alpha|\leq m} a_\alpha D^\alpha u & = 0 \quad \mbox{in} \;\;\Omega,\quad  u - f \in \widetilde H^s(\Omega)  
\end{align*}
and 
\begin{align*}
(-\Delta)^s u^* + \sum_{|\alpha|\leq m} (-1)^{|\alpha|} D^\alpha (a_\alpha  u^*) & = 0 \quad \mbox{in} \;\;\Omega, \quad u^* - f  \in \widetilde H^s(\Omega)
\end{align*}
with $f,g\in H^s(\mathbb R^n)$.
\end{definition}

The next lemma proves that the exterior DN maps $\Lambda_P$ and $\Lambda_P^*$ are well-defined and have some expected properties:

\begin{lemma}[Exterior DN maps]\label{lemma-DN-maps}
The exterior DN maps $\Lambda_P$ and $\Lambda_P^*$ are well-defined, linear and continuous. Moreover, the identity $\langle \Lambda_P[f],[g] \rangle = \langle [f], \Lambda^*_P[g] \rangle$ holds.
\end{lemma}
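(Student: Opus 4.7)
The plan is to verify the three properties in order: well-definedness on the quotient space $X$, linearity and continuity, and finally the adjoint identity. I will only write out the argument for $\Lambda_P$, as the same reasoning applies verbatim to $\Lambda_P^*$ after swapping $B_P \leftrightarrow B_P^*$ and using the eigenvalue condition~\eqref{eigenvalue-condition-2} instead of~\eqref{eigenvalue-condition-1}.

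\textbf{Well-definedness.} I need to show that $B_P(u_f, g)$ depends only on the classes $[f], [g] \in X$. For the second slot, if $[g]=[g']$, then $g-g' \in \widetilde{H}^s(\Omega)$, and since $u_f$ is a weak solution we have $B_P(u_f, g-g')=0$, so $B_P(u_f,g)=B_P(u_f,g')$. For the first slot, suppose $[f]=[f']$, i.e.\ $f-f'\in\widetilde H^s(\Omega)$. Then $u_f - f' = (u_f-f) + (f-f') \in \widetilde H^s(\Omega)$, which shows that $u_f$ is admissible as a solution for exterior data $f'$; by the well-posedness lemma~\ref{well-posedness} together with the eigenvalue assumption~\eqref{eigenvalue-condition-1}, such a solution is unique, hence $u_{f'}=u_f$.

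\textbf{Linearity and continuity.} Linearity follows at once from the bilinearity of $B_P$ and the linearity of the solution map $f\mapsto u_f$ (which holds because differences of solutions with exterior data in $\widetilde H^s(\Omega)$ vanish by~\eqref{eigenvalue-condition-1}). For continuity, given representatives $f,g$ and any $\epsilon>0$, I pick $\phi,\psi\in \widetilde H^s(\Omega)$ with $\|f-\phi\|_{H^s}\leq \|[f]\|_X+\epsilon$ and $\|g-\psi\|_{H^s}\leq \|[g]\|_X+\epsilon$. By the well-definedness just proven, $B_P(u_f,g) = B_P(u_{f-\phi}, g-\psi)$. Applying lemma~\ref{boundedness-bilinear} and the a priori estimate from lemma~\ref{well-posedness} (with $F=0$),
\begin{equation}
|B_P(u_{f-\phi}, g-\psi)| \leq C\|u_{f-\phi}\|_{H^s}\|g-\psi\|_{H^s} \leq C'\|f-\phi\|_{H^s}\|g-\psi\|_{H^s},
\end{equation}
and letting $\epsilon\to 0$ gives $|\langle \Lambda_P[f],[g]\rangle|\leq C'\|[f]\|_X\|[g]\|_X$.

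\textbf{Adjoint identity.} The identity $\langle \Lambda_P[f],[g]\rangle = \langle[f],\Lambda_P^*[g]\rangle$ is the main content. Unwinding definitions, I must show $B_P(u_f, g) = B_P^*(u_g^*, f)$. The key observation is that by the weak formulations, $B_P(u_f,\phi)=0$ and $B_P^*(u_g^*,\phi)=0$ for every $\phi\in\widetilde H^s(\Omega)$; combined with the relation $B_P(v,w)=B_P^*(w,v)$ from the remark after~\eqref{adjoint-bilinear}, this also yields $B_P(\phi,u_g^*)=0$ and $B_P^*(\phi,u_f)=0$ for $\phi\in \widetilde H^s(\Omega)$. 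Since $f-u_f$ and $g-u_g^*$ both lie in $\widetilde H^s(\Omega)$, I can add or subtract such terms freely:
\begin{equation}
B_P(u_f,g) = B_P(u_f, g-u_g^*) + B_P(u_f,u_g^*) = 0 + B_P(u_f,u_g^*),
\end{equation}
\begin{equation}
B_P^*(u_g^*,f) = B_P^*(u_g^*, f-u_f) + B_P^*(u_g^*, u_f) = 0 + B_P(u_f,u_g^*),
\end{equation}
and the two right-hand sides agree.

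I do not expect any serious obstacle: the whole argument is a formal manipulation given the density, boundedness, and uniqueness results already established. The only subtle point is making sure that the adjoint identity uses the correct side of $\widetilde H^s(\Omega)$-vanishing for both $B_P$ and $B_P^*$, which is exactly why the remark before lemma~\ref{boundedness-bilinear} is emphasized.
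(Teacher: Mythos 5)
Your proposal is correct and follows essentially the same route as the paper: well-definedness via uniqueness of solutions and the vanishing of $B_P(u_f,\cdot)$ on $\widetilde H^s(\Omega)$, continuity via lemma~\ref{boundedness-bilinear} and the a priori estimate followed by taking infima over representatives, and the adjoint identity by replacing $g$ with the particular extension $u_g^*$ and invoking $B_P(v,w)=B_P^*(w,v)$. The only cosmetic difference is that the paper phrases the last step in terms of a generic extension $e_g$ of $g|_{\Omega_e}$ before specializing to $u_g^*$, whereas you go to $u_g^*$ directly; the substance is identical.
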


\begin{proof}
We show well-definedness and continuity only for $\Lambda_P$, the proof being similar for $\Lambda_P^*$. We note that the required unique solutions exist by lemma \ref{well-posedness}.

If $\phi\in \widetilde H^s(\Omega)$, then $u_f|_{\Omega_e} = f = u_{f+\phi}|_{\Omega_e}$, and also $u_f$, $u_{f+\phi}$ both solve $(-\Delta)^su+Pu=0$ in $\Omega$. By unicity of solutions, we must then have that $u_f$ and $u_{f+\phi}$ coincide. On the other hand, if $\psi \in \widetilde H^s(\Omega)$, then $\psi|_{\Omega_e}=0$. These two facts imply the well-definedness of $\Lambda_P$, since $$ B_P(u_{f+\phi},g+\psi) = B_P(u_f,g) + B_P(u_f,\psi) = B_P(u_f,g).$$

The continuity of $\Lambda_P$ is an easy consequence of lemma \ref{boundedness-bilinear} and the estimate in lemma \ref{well-posedness}. If $f,g \in H^s(\mathbb R^n)$ and $\phi,\psi \in \widetilde H^s(\Omega)$, then 
\begin{align*} 
|\langle\Lambda_P[f],[g]\rangle| & = |B_P(u_{f-\phi},g-\psi)| \leq C \|u_{f-\phi}\|_{H^s} \|g-\psi\|_{H^s} \leq C \|f-\phi\|_{H^s} \|g-\psi\|_{H^s}.
\end{align*}
By taking the infimum on both sides with respect to $\phi$ and $\psi$, we end up with 
$$ |\langle\Lambda_P[f],[g]\rangle| \leq C\inf_{\phi\in\widetilde H^s(\Omega)}\|f-\phi\|_{H^s}\inf_{\psi\in\widetilde H^s(\Omega)}\|g-\psi\|_{H^s} = C \|[f]\|_X \|[g]\|_X.$$ 

The well-posedness result proved above implies that for all $f,g \in H^s(\mathbb R^n)$ we have $\langle\Lambda_P[f],[g]\rangle = B_P(u_f, e_g)$, where $e_g$ is a generic extension of $g|_{\Omega_e}$ from $\Omega_e$ to $\mathbb R^n$. In particular, $\langle\Lambda_P[f],[g]\rangle = B_P(u_f, u_g^*)$. By lemma \ref{boundedness-bilinear} this leads to 
$$\langle \Lambda_P[f],[g] \rangle = B_P(u_f,u_g^*) = B_P^*(u_g^*,u_f) = \langle \Lambda^*_P[g],[f] \rangle,$$
which conlcudes the proof.
\end{proof}

\begin{remark}
We should observe at this point that a priori $\Lambda_P^*$ has no reason to be the adjoint of $\Lambda_P$, as the symbols would suggest. However, the identity we proved in lemma \ref{lemma-DN-maps} shows that this is in fact true, and thus there is no abuse of notation.
\end{remark}

\subsection{Proof of injectivity}\label{subsec-injectivity-singular}

The proof of injectivity is based on an Alessandrini identity and the Runge approximation property for our operator, following the scheme developed in \cite{GSU20}. 

\begin{lemma}[Alessandrini identity]\label{alex}
Let $\Omega \subset \mathbb R^n$ be a bounded open set. Let $s \in \mathbb R^+ \setminus \mathbb Z$ and $m\in \mathbb N$ such that $2s > m$. For $j=1,2$, let $a_{j,\alpha} \in M_0(H^{s-|\alpha|}\rightarrow H^{-s})$. For any $f_1, f_2 \in H^s(\mathbb R^n)$, let $u_1,u_2^* \in H^s(\mathbb R^n)$ respectively solve
 \begin{align*}
(-\Delta)^s u_1 + \sum_{|\alpha|\leq m} a_{1,\alpha} D^\alpha u_1 & = 0 \quad \mbox{in} \;\;\Omega,\quad  u_1 - f_1 \in \widetilde H^s(\Omega)  
\end{align*}
and 
\begin{align*}
(-\Delta)^s u_2^* + \sum_{|\alpha|\leq m} (-1)^{|\alpha|} D^\alpha (a_{2,\alpha}  u_2^*) & = 0 \quad \mbox{in} \;\;\Omega, \quad u_2^* - f_2  \in \widetilde H^s(\Omega) .
\end{align*}
Then we have the integral identity
$$ \langle (\Lambda_{P_1} - \Lambda_{P_2})[f_1],[f_2] \rangle = \sum_{|\alpha|\leq m} \langle(a_{1,\alpha}-a_{2,\alpha}), (D^\alpha u_1) u_2^*\rangle. $$
\end{lemma}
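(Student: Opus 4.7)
The strategy is the standard one: express both $\langle \Lambda_{P_1}[f_1],[f_2]\rangle$ and $\langle \Lambda_{P_2}[f_1],[f_2]\rangle$ as the bilinear forms $B_{P_1}(u_1,u_2^*)$ and $B_{P_2}(u_1,u_2^*)$ evaluated on the same pair of functions, so that upon subtraction the fractional Laplacian terms $\langle (-\Delta)^{s/2}u_1,(-\Delta)^{s/2}u_2^*\rangle$ cancel and only the differences $a_{1,\alpha}-a_{2,\alpha}$ survive. All of the required ingredients (well-posedness of the direct and adjoint problems, boundedness of $B_P$ and $B_P^*$, the identity $B_P(v,w)=B_P^*(w,v)$, and the duality $\langle\Lambda_P[f],[g]\rangle=\langle[f],\Lambda_P^*[g]\rangle$) are already available from lemmas \ref{boundedness-bilinear}, \ref{well-posedness} and \ref{lemma-DN-maps}.

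\textbf{Step 1: rewrite $\langle\Lambda_{P_1}[f_1],[f_2]\rangle$.} By the definition of the DN map, $\langle\Lambda_{P_1}[f_1],[f_2]\rangle=B_{P_1}(u_1,g)$ for any extension $g\in H^s(\mathbb{R}^n)$ with $g-f_2\in\widetilde{H}^s(\Omega)$. Since $u_2^*-f_2\in\widetilde{H}^s(\Omega)$, the adjoint solution $u_2^*$ is admissible, so
\begin{equation}
\langle\Lambda_{P_1}[f_1],[f_2]\rangle = B_{P_1}(u_1,u_2^*).
\end{equation}

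\textbf{Step 2: rewrite $\langle\Lambda_{P_2}[f_1],[f_2]\rangle$.} Using the duality identity of lemma \ref{lemma-DN-maps}, then the symmetry $B^*_{P_2}(v,w)=B_{P_2}(w,v)$, and finally the fact that $u_2^*$ solves the adjoint $P_2$-equation (so $B^*_{P_2}(u_2^*,v)=0$ for every $v\in\widetilde{H}^s(\Omega)$, and in particular $B^*_{P_2}(u_2^*,u_1)=B^*_{P_2}(u_2^*,f_1)$ because $u_1-f_1\in\widetilde{H}^s(\Omega)$), we obtain
\begin{equation}
\langle\Lambda_{P_2}[f_1],[f_2]\rangle = \langle[f_1],\Lambda^*_{P_2}[f_2]\rangle = B^*_{P_2}(u_2^*,f_1) = B^*_{P_2}(u_2^*,u_1) = B_{P_2}(u_1,u_2^*).
\end{equation}

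\textbf{Step 3: subtract.} Combining Steps 1 and 2 and using the explicit form \eqref{bilinear} of $B_{P_j}$, the $(-\Delta)^{s/2}$ terms match exactly and cancel, leaving
\begin{equation}
\langle(\Lambda_{P_1}-\Lambda_{P_2})[f_1],[f_2]\rangle = \sum_{|\alpha|\leq m}\bigl(\langle a_{1,\alpha},(D^\alpha u_1)u_2^*\rangle - \langle a_{2,\alpha},(D^\alpha u_1)u_2^*\rangle\bigr),
\end{equation}
which is exactly the claimed identity. The only mildly delicate point is Step 2, where one must be careful that $u_2^*$ is used as an extension of $f_2$ on one side but as the test function on the other; this is legitimate thanks to the boundedness of $B_{P_2}$ and $B^*_{P_2}$ on $H^s\times H^s$ established in lemma \ref{boundedness-bilinear}. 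No new analytic estimates are required beyond what has already been proved.
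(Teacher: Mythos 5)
Your proof is correct and follows essentially the same route as the paper: both rewrite $\langle\Lambda_{P_1}[f_1],[f_2]\rangle$ as $B_{P_1}(u_1,u_2^*)$, use the duality $\langle\Lambda_{P_2}[f_1],[f_2]\rangle=\langle[f_1],\Lambda_{P_2}^*[f_2]\rangle=B_{P_2}^*(u_2^*,u_1)=B_{P_2}(u_1,u_2^*)$, and cancel the $(-\Delta)^{s/2}$ terms upon subtraction. Your extra remarks on why $u_2^*$ and $u_1$ are admissible representatives just spell out the well-definedness argument already contained in lemma \ref{lemma-DN-maps}.
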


\begin{proof}
The proof is a simple computation following from lemma \ref{lemma-DN-maps}:
\begin{align*}
\langle (\Lambda_{P_1} - \Lambda_{P_2})[f_1],[f_2] \rangle & = \langle \Lambda_{P_1} [f_1],[f_2] \rangle - \langle \Lambda_{P_2}[f_1],[f_2] \rangle = \langle \Lambda_{P_1} [f_1],[f_2] \rangle - \langle [f_1], \Lambda_{P_2}^*[f_2] \rangle \\ & = B_{P_1}(u_1,u_2^*) - B_{P_2}^*(u_2^*,u_1) = \sum_{|\alpha|\leq m} \langle(a_{1,\alpha}-a_{2,\alpha}), (D^\alpha u_1) u_2^*\rangle.\qedhere
\end{align*}
\end{proof}

\begin{lemma}[Runge approximation property]\label{runge}
Let $\Omega, W \subset \mathbb R^n$ respectively be a bounded open set and a non-empty open set such that $\overline W \cap \overline \Omega = \emptyset$. Let $s \in \mathbb R^+ \setminus \mathbb Z$ and $m\in \mathbb N$ be such that $2s > m$, and let $a_{\alpha} \in M_0(H^{s-|\alpha|}\rightarrow H^{-s})$. Moreover, let $\mathcal{R} := \{\, u_f - f: f\in C^\infty_c(W) \,\} \subset \widetilde H^s(\Omega)$ where $u_f$ solves
\begin{align*}
(-\Delta)^s u_f + \sum_{|\alpha|\leq m} a_\alpha D^\alpha u_f & = 0 \quad \mbox{in} \;\;\Omega,\quad  u_f - f \in \widetilde H^s(\Omega)  
\end{align*}
and $\mathcal{R}^* := \{\, u^*_f - f: f\in C^\infty_c(W) \,\} \subset \widetilde H^s(\Omega)$ where $u^*_f$ solves
\begin{align*}
(-\Delta)^s u_f^* + \sum_{|\alpha|\leq m} (-1)^{|\alpha|} D^\alpha (a_\alpha  u_f^*) & = 0 \quad \mbox{in} \;\;\Omega, \quad u_f^* - f  \in \widetilde H^s(\Omega) .
\end{align*}
Then $\mathcal R$ and $\mathcal R^*$ are dense in $\widetilde H^s(\Omega)$.
\end{lemma}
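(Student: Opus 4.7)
My plan is to invoke the Hahn--Banach theorem to reduce density of $\mathcal R$ in $\widetilde H^s(\Omega)$ to the following annihilator statement: if $v\in (\widetilde H^s(\Omega))^*$ satisfies $\langle v, u_f - f\rangle = 0$ for every $f\in C_c^\infty(W)$, then $v=0$. Since $\mathcal R\subset \widetilde H^s(\Omega)$, this reformulation is standard once I verify that $\mathcal R$ is a linear subspace (which follows from linearity and uniqueness of $f\mapsto u_f$ granted by lemma \ref{well-posedness}). The corresponding statement for $\mathcal R^*$ follows by the same argument with the roles of $B_P$ and $B_P^*$ swapped, so I will focus on $\mathcal R$.

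Fix such a $v$. Using the well-posedness of the adjoint problem (lemma \ref{well-posedness}, applied to the adjoint PDO under hypothesis \eqref{eigenvalue-condition-2}), let $\phi\in \widetilde H^s(\Omega)$ be the unique solution of
\begin{equation}
B_P^*(\phi, w) = \langle v, w\rangle \quad \text{for all } w\in \widetilde H^s(\Omega).
\end{equation}
Taking $w = u_f - f \in \widetilde H^s(\Omega)$ and applying the identity $B_P^*(\phi, w) = B_P(w, \phi)$ from the remark after the bilinear forms, I get
\begin{equation}
\langle v, u_f - f\rangle = B_P(u_f, \phi) - B_P(f, \phi) = -B_P(f, \phi),
\end{equation}
since $\phi\in \widetilde H^s(\Omega)$ is a valid test function for the equation satisfied by $u_f$. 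Expanding,
\begin{equation}
B_P(f, \phi) = \langle (-\Delta)^{s/2}f, (-\Delta)^{s/2}\phi\rangle + \sum_{|\alpha|\leq m} \langle a_\alpha, (D^\alpha f)\phi\rangle.
\end{equation}
The crucial observation is that each term in the sum vanishes: the product $(D^\alpha f)\phi$ is zero almost everywhere, because $\phi$ vanishes on $\Omega_e\supset W$ while $D^\alpha f$ vanishes on a neighborhood of $\overline\Omega$ (recall $\overline W\cap \overline\Omega=\emptyset$). Thus the extended pairing defined via the Cauchy sequence construction in section \ref{subsec-rough} necessarily vanishes.

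This leaves $\langle f, (-\Delta)^s\phi\rangle = 0$ for every $f\in C_c^\infty(W)$, that is, $(-\Delta)^s\phi|_W = 0$. Since we also have $\phi|_W = 0$, lemma \ref{lemma:ucpoffractionallaplacian} (UCP for the higher order fractional Laplacian) forces $\phi\equiv 0$ on $\mathbb R^n$. Plugging this back into the adjoint equation yields $v = 0$ as an element of $(\widetilde H^s(\Omega))^*$, finishing the argument for $\mathcal R$. The density of $\mathcal R^*$ is obtained by repeating the above with the roles of the forward and adjoint problems interchanged, using \eqref{eigenvalue-condition-1} instead of \eqref{eigenvalue-condition-2}. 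I expect the only delicate step to be the vanishing of the multiplier pairings $\langle a_\alpha, (D^\alpha f)\phi\rangle$: although the product vanishes pointwise, one must justify this at the level of the abstract multiplier duality defined in section \ref{subsec-rough}. This is handled by approximating $a_\alpha$ by $C_c^\infty(\mathbb R^n)$ functions in the $M$-norm (which is possible thanks to the $M_0$ hypothesis) and passing to the limit, using that the approximating pairings genuinely are integrals of a product that vanishes almost everywhere.
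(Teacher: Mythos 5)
Your proposal is correct and follows essentially the same route as the paper's proof: a Hahn--Banach reduction, solving the adjoint problem with the annihilating functional as source, using $B_P^*(\phi,w)=B_P(w,\phi)$ and the disjoint supports to reduce to $\langle f,(-\Delta)^s\phi\rangle=0$ on $W$, and concluding by the UCP of lemma \ref{lemma:ucpoffractionallaplacian}. The only differences are an immaterial sign convention in the definition of $\phi$ and your (welcome) extra care in justifying the vanishing of the multiplier pairings at the level of the extended duality, which the paper leaves implicit.
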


\begin{proof}
The proofs of the two statements are similar, so we show only the density of $\mathcal R$ in $\widetilde H^s(\Omega)$. By the Hahn-Banach theorem, it is enough to prove that any functional $F$ acting on $\widetilde H^s(\Omega)$ that vanishes on $\mathcal R$ must be identically $0$. Thus, let $F \in (\widetilde H^s(\Omega))^*$ and assume $F(u_f-f)=0$ for all $f\in C^\infty_c(W)$. Let $\phi$ be the unique solution of
\begin{equation}
\label{eq:rungesolution}
(-\Delta)^s \phi + \sum_{|\alpha|\leq m} (-1)^{|\alpha|} D^\alpha (a_\alpha  \phi)  = -F \quad \mbox{in} \;\;\Omega, \quad \phi \in \widetilde H^s(\Omega).
\end{equation}
In other words, $\phi$ is the unique function in $\widetilde H^s(\Omega)$ such that $B_P^*(\phi,w)=-F(w)$ for all $w\in \widetilde H^s(\Omega)$. Then we can compute
\begin{align}\label{runge-1}
0 & = F(u_f-f) = -B_P^*(\phi, u_f-f) = B_P^*(\phi,f) \\ & = \langle (-\Delta)^{s/2} f, (-\Delta)^{s/2} \phi \rangle + \sum_{|\alpha|\leq m} \langle a_\alpha, D^\alpha f \phi\rangle \\ & = \langle f, (-\Delta)^{s} \phi \rangle.
\end{align}
On the first line of \eqref{runge-1} we used that $\phi \in \widetilde H^s(\Omega)$ and $u_f$ solves the equation in $\Omega$, and on the last line we used the support condition for $f$. By the arbitrariety of $f\in C^\infty_c(W)$ we have obtained that $(-\Delta)^s\phi=0$ in $W$, and on the same set we also have $\phi=0$. Using the unique continuation result for the higher order fractional Laplacian given in lemma \ref{lemma:ucpoffractionallaplacian} we deduce $\phi\equiv 0$ on all of $\mathbb R^n$. The vanishing of the functional $F$ now follows easily from the definition of
~$\phi$.
\end{proof}

\begin{remark} We remark that using the same proof one can show that $r_\Omega\mathcal{R}\subset L^2(\Omega)$ and $r_\Omega\mathcal{R}^*\subset L^2(\Omega)$ are dense in $L^2(\Omega)$, where $r_\Omega$ is the restriction to $\Omega$. If $F\in L^2(\Omega)$, then $F$ induces an element in $(\widetilde{H}^s(\Omega))^*$ via the integral $F(w):=\ip{F}{r_\Omega w}_{L^2(\Omega)}$, where $w\in\widetilde{H}
^s(\Omega)$. Hence one can choose the solution $\phi$ in equation \eqref{eq:rungesolution} with $F$ as a source term and complete the proof as in equation~\eqref{runge-1} showing that $(r_\Omega\mathcal{R})^\perp=\{0\}$ in $L^2(\Omega)$ (similarly $(r_\Omega\mathcal{R}^*)^\perp=\{0\}$). 
\end{remark}

We are ready to prove the main result of the paper.

\begin{proof}[Proof of theorem \ref{main-theorem-singular}]
\textbf{Step 1.} Since one can always shrink the sets $W_1$ and $W_2$ if necessary, we can assume without loss of generality that $\overline{W_1}\cap\overline{W_2}=\emptyset$. Let $v_1, v_2 \in C^\infty_c(\Omega)$. By the Runge approximation property proved in lemma \ref{runge} we can find two sequences of functions $\{f_{j,k}\}_{k\in\mathbb N} \subset C^\infty_c(W_j)$, $j=1,2$, such that 
\begin{align*}
 u_{1,k} = f_{1,k}+v_1+r_{1,k},\quad u^*_{2,k} = f_{2,k}+v_2+r_{2,k}
\end{align*}
where $u_{1,k},\, u_{2,k}^* \in \widetilde H^s(\Omega)$ respectively solve
 \begin{align*}
(-\Delta)^s u_{1,k} + \sum_{|\alpha|\leq m} a_{1,\alpha} D^\alpha u_{1,k} & = 0 \quad \mbox{in} \;\;\Omega,\quad  u_{1,k}-f_{1,k} \in \widetilde H^s(\Omega)  
\end{align*}
and 
\begin{align*}
(-\Delta)^s u_{2,k}^* + \sum_{|\alpha|\leq m} (-1)^{|\alpha|} D^\alpha (a_{2,\alpha}u_{2,k}^*) & = 0 \quad \mbox{in} \;\;\Omega, \quad u_{2,k}^* -f_{2,k} \in \widetilde H^s(\Omega)
\end{align*}
and $r_{1,k},\, r_{2,k}\rightarrow 0$ in $\widetilde H^s(\Omega)$ as $k\rightarrow \infty$. By the assumption on the exterior DN maps and the Alessandrini identity from lemma \ref{alex} we have
\begin{align}\label{main-1} 
0 & = \langle (\Lambda_{P_1} - \Lambda_{P_2})[f_{1,k}],[f_{2,k}] \rangle = \sum_{|\alpha|\leq m} \langle(a_{1,\alpha}-a_{2,\alpha}), (D^\alpha u_{1,k}) u_{2,k}^*\rangle. 
\end{align}

On the other hand, the support conditions imply that
\begin{align}
    \sum_{|\alpha|\leq m} \langle(a_{1,\alpha}-a_{2,\alpha}), (D^\alpha u_{1,k}) u_{2,k}^*\rangle &= \sum_{|\alpha|\leq m} \langle(a_{1,\alpha}-a_{2,\alpha}), (D^\alpha (u_{1,k}-f_{1,k})) (u_{2,k}^*-f_{2,k})\rangle \\
    &= \sum_{|\alpha|\leq m} \langle(a_{1,\alpha}-a_{2,\alpha}), (D^\alpha (v_1+r_{1,k})) (v_2+r_{2,k})\rangle.
\end{align}
Thus by taking the limit $k \to \infty$ and using lemma \ref{boundedness-bilinear}, we obtain
\begin{equation}\label{main-2}
 \sum_{|\alpha|\leq m} \langle(a_{1,\alpha}-a_{2,\alpha}), (D^\alpha v_1) v_2\rangle = 0 \quad \mbox{for all} \quad v_1,v_2 \in C^\infty_c(\Omega)
\end{equation}
by formula \eqref{main-1}.

\textbf{Step 2.} Assume that we have $a_{1,\alpha}|_{\Omega} = a_{2,\alpha}|_{\Omega}$ for all $\alpha$ such that $|\alpha| < N$ for some $N \in \mathbb N$. We show that the equality of the coefficients also holds for $\alpha$ for which $|\alpha| = N$, and this will prove the theorem by the principle of complete induction. 

To this end, consider $v_2 \in C^\infty_c(\Omega)$, and then take $v_1\in C^\infty_c(\Omega)$ such that $v_1(x) = x^\alpha$ on supp$(v_2) \Subset \Omega$. Recall that since $\alpha = (\alpha_1, \alpha_2, ..., \alpha_n) \in \mathbb N^n$ is a multi-index and $x=(x_1, x_2, ..., x_n)\in \mathbb R^n$, the symbol $x^\alpha$ is intended to mean $x_1^{\alpha_1}x_2^{\alpha_2}...\,x_n^{\alpha_n}$. With this choice of $v_1, v_2$, equation \eqref{main-2} becomes

\begin{align}\label{main-3}
0 & = \sum_{|\beta|\leq m} \langle(a_{1,\beta}-a_{2,\beta}), (D^\beta v_1) v_2\rangle = \sum_{N \leq |\beta|\leq m} \langle(a_{1,\beta}-a_{2,\beta}), (D^\beta x^\alpha)v_2\rangle \\ & = \sum_{N < |\beta|\leq m} \langle(a_{1,\beta}-a_{2,\beta}),(D^\beta x^\alpha) v_2\rangle  + \sum_{|\beta|=N, \,\beta \neq \alpha} \langle(a_{1,\beta}-a_{2,\beta}),(D^\beta x^\alpha) v_2\rangle \\ & \quad + \langle(a_{1,\alpha}-a_{2,\alpha}),(D^\alpha x^\alpha) v_2\rangle.
\end{align}

\noindent If $|\beta| > N = |\alpha|$, then there must exist $k \in \{1,2,...,n\}$ such that $\beta_k > \alpha_k$. This is true also if $|\beta| = N$ with $\beta\neq\alpha$. In both cases we can compute
$$ D^\beta(x^\alpha) = (\partial_{x_1}^{\beta_1} x_1^{\alpha_1})\;(\partial_{x_2}^{\beta_2} x_2^{\alpha_2})\; ... \;( \partial_{x_n}^{\beta_n} x_n^{\alpha_n}) = 0 $$
because $\partial_{x_k}^{\beta_k} x_k^{\alpha_k} =0$. Therefore formula \eqref{main-3} becomes
$$ 0=\langle(a_{1,\alpha}-a_{2,\alpha}),(D^\alpha x^\alpha) v_2\rangle = \alpha! \langle a_{1,\alpha}-a_{2,\alpha}, v_2 \rangle $$

\noindent which by the arbitrariety of $v_2\in C^\infty_c(\Omega)$ implies $a_{1,\alpha}|_{\Omega} = a_{2,\alpha}|_{\Omega}$ also for $\alpha$ for which $|\alpha| = N$. 

\textbf{Step 3.} We have proved that $a_{1,\alpha}|_{\Omega} = a_{2,\alpha}|_{\Omega}$ for all $\alpha$ of order $|\alpha| \leq m$. Since this entails $P_1|_{\Omega} = P_2|_{\Omega}$, the proof is complete.
\end{proof}

\section{Main theorem for bounded coefficients}\label{section-main-bounded}

We shall now study the case when the coefficients of PDOs are from the bounded spaces $H^{r_\alpha,\infty}(\Omega)$. It should be noted, however, that most of the considerations of the previous section still apply identically. 

\subsection{Well-posedness of the forward problem}\label{subsec-WP-bounded}

We shall define the bilinear forms for the problems \eqref{problem} and \eqref{adjoint-problem} respectively by \eqref{bilinear} and \eqref{adjoint-bilinear}, just as in the case of singular coefficients. These will turn out to be bounded in $H^s(\R^n)\times H^s(\R^n)$ as well, but the proof we give of this fact is \emph{a fortiori} different. Since now we assume that $a_\alpha\in H^{r_\alpha, \infty}(\Omega)\subset L^\infty(\Omega)$ for $r_\alpha\geq 0$, the duality pairing $\ip{a_\alpha}{(D^\alpha v) w}$ becomes an integral over $\Omega$.

\begin{lemma}[Boundedness of the bilinear forms]\label{boundedness-bilinear-bounded}
Let $\Omega \subset \mathbb R^n$ be a bounded Lipschitz domain and $s \in \mathbb R^+ \setminus \mathbb Z$, $m\in \mathbb N$ such that $2s > m$. Let $a_{\alpha} \in H^{r_\alpha,\infty}(\Omega)$, with $r_\alpha$ defined as in \eqref{r-alpha-conditions}. Then $B_P$ and $B_P^*$ extend as bounded bilinear forms on $H^s(\mathbb R^n)\times H^s(\mathbb R^n)$.
\end{lemma}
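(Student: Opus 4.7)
The plan is to mimic the singular-coefficient proof of lemma \ref{boundedness-bilinear}, replacing the multiplier estimate by a Kato--Ponce argument applied after extending the coefficients to $\R^n$. I only discuss $B_P$; the analogous statement for $B_P^*$ follows by swapping the roles of $v$ and $w$ in the lower-order pairings. The quadratic term is handled at once: by Plancherel and Cauchy--Schwarz, $|\langle (-\Delta)^{s/2}v, (-\Delta)^{s/2}w\rangle| \leq \|v\|_{H^s(\R^n)}\|w\|_{H^s(\R^n)}$. For each $\alpha$ with $|\alpha|\leq m$, I would use the definition of $H^{r_\alpha,\infty}(\Omega)$ to pick an extension of $a_\alpha$ in $H^{r_\alpha,\infty}(\R^n)$ and multiply it by a fixed cutoff in $C^\infty_c(\R^n)$ equal to $1$ on $\overline\Omega$, obtaining $\tilde a_\alpha \in H^{r_\alpha,\infty}(\R^n)$ with compact support, $\tilde a_\alpha|_\Omega = a_\alpha$ and $\|\tilde a_\alpha\|_{H^{r_\alpha,\infty}(\R^n)} \leq C\|a_\alpha\|_{H^{r_\alpha,\infty}(\Omega)}$. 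I interpret the pairing $\langle a_\alpha, (D^\alpha v)w\rangle$ as $\int_{\R^n}\tilde a_\alpha (D^\alpha v) w\, dx$, which coincides with $\int_\Omega a_\alpha (D^\alpha v) w\, dx$ whenever $v$ or $w$ is supported in $\overline\Omega$, so the choice of extension is immaterial for the weak formulation and the DN map.

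If $|\alpha| < s$, then $r_\alpha = 0$ and a direct H\"older estimate gives $|\int_{\R^n}\tilde a_\alpha(D^\alpha v)w\, dx| \leq \|\tilde a_\alpha\|_{L^\infty(\R^n)}\|D^\alpha v\|_{L^2(\R^n)}\|w\|_{L^2(\R^n)} \leq C\|v\|_{H^s(\R^n)}\|w\|_{H^s(\R^n)}$, since $|\alpha| < s$ implies $\|D^\alpha v\|_{L^2} \leq \|v\|_{H^{|\alpha|}} \leq \|v\|_{H^s}$. The case $|\alpha| = s$ cannot occur because $s\notin\Z$. For $|\alpha| > s$ one has $r_\alpha \in \{|\alpha|-s, |\alpha|-s+\delta\}$ and $r_\alpha < s$ (by $m < 2s$, choosing $\delta$ small when needed). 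The $H^{s-|\alpha|}(\R^n)$--$H^{|\alpha|-s}(\R^n)$ duality yields
\begin{equation*}
\left|\int_{\R^n}\tilde a_\alpha (D^\alpha v) w\,dx\right| = |\langle D^\alpha v, \tilde a_\alpha w\rangle| \leq \|D^\alpha v\|_{H^{s-|\alpha|}(\R^n)}\|\tilde a_\alpha w\|_{H^{|\alpha|-s}(\R^n)} \leq \|v\|_{H^s(\R^n)}\|\tilde a_\alpha w\|_{H^{r_\alpha}(\R^n)},
\end{equation*}
and the Kato--Ponce inequality (lemma \ref{lemma:katoponce}) with $r=2$, $p_1=q_2=\infty$, $q_1=p_2=2$ gives
\begin{equation*}
\|\tilde a_\alpha w\|_{H^{r_\alpha}(\R^n)} \leq C\bigl(\|\tilde a_\alpha\|_{H^{r_\alpha,\infty}(\R^n)}\|w\|_{L^2(\R^n)} + \|\tilde a_\alpha\|_{L^\infty(\R^n)}\|w\|_{H^{r_\alpha}(\R^n)}\bigr) \leq C'\|a_\alpha\|_{H^{r_\alpha,\infty}(\Omega)}\|w\|_{H^s(\R^n)},
\end{equation*}
using the embedding $H^{r_\alpha,\infty}(\R^n)\hookrightarrow L^\infty(\R^n)$ and $\|w\|_{H^{r_\alpha}(\R^n)} \leq \|w\|_{H^s(\R^n)}$ (valid since $r_\alpha < s$). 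Summing over $|\alpha|\leq m$ yields $|B_P(v,w)|\leq C\|v\|_{H^s(\R^n)}\|w\|_{H^s(\R^n)}$ on $C^\infty_c(\R^n)\times C^\infty_c(\R^n)$, and density then extends it to $H^s(\R^n)\times H^s(\R^n)$.

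The main obstacle is the step $|\alpha|>s$: one must simultaneously convert the naturally $\Omega$-localised pairing into a global estimate on $\R^n$ via a controlled extension of the coefficient, and balance the Kato--Ponce factors so that only the $L^\infty$ and $H^{r_\alpha,\infty}(\R^n)$ norms of $\tilde a_\alpha$ (both controlled by $\|a_\alpha\|_{H^{r_\alpha,\infty}(\Omega)}$) meet the $L^2$ and $H^{r_\alpha}(\R^n)$ norms of $w$ (both controlled by $\|w\|_{H^s(\R^n)}$). The hypothesis $m<2s$ is exactly what ensures $r_\alpha<s$ so the final embedding can be used, and the $\delta$-shift at half-integer values of $|\alpha|-s$ in \eqref{r-alpha-conditions} sidesteps the well-known failures of extension-by-zero and of the identification $H^r_0 = H^r_{\overline\Omega}$ at those exponents on Lipschitz domains.
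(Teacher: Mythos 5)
Your term-by-term estimates are essentially the paper's (coefficient extension, Kato--Ponce, duality, and the conditions $r_\alpha<s$, $|\alpha|-r_\alpha\leq s$), but there is a genuine gap in how you localise the pairing when $|\alpha|>s$. The bilinear form in question uses $\langle a_\alpha,(D^\alpha v)w\rangle=\int_\Omega a_\alpha(D^\alpha v)w\,dx$, whereas you bound $\int_{\R^n}\tilde a_\alpha(D^\alpha v)w\,dx$ for a compactly supported extension $\tilde a_\alpha$. These agree only when $(D^\alpha v)w$ is supported in $\overline\Omega$; for general $v,w\in C^\infty_c(\R^n)$ they differ by $\int_{\Omega_e}\tilde a_\alpha(D^\alpha v)w\,dx$, which is generically nonzero because $\tilde a_\alpha$ cannot be taken supported in $\overline\Omega$ --- that is exactly the extension-by-zero obstruction you allude to at the end, and your argument never actually confronts it. So you have proved boundedness of a \emph{different} bilinear form. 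The discrepancy is indeed harmless for the weak formulation (test functions lie in $\widetilde H^s(\Omega)$), but not for the DN map: $\langle\Lambda_P[f],[g]\rangle=B_P(u_f,g)$ pairs two functions neither of which is supported in $\overline\Omega$, so your modified form yields a DN map that depends on the chosen extension and is not the object in the theorem. Bounding the leftover term $\int_{\Omega_e}\tilde a_\alpha(D^\alpha v)w$ is essentially as hard as the original problem.

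The paper instead keeps the integral over $\Omega$ and uses the local duality $\bigl|\int_\Omega a_\alpha w(D^\alpha v)\,dx\bigr|\leq\|a_\alpha w\|_{(H^{-r_\alpha}(\Omega))^*}\|D^\alpha v\|_{H^{-r_\alpha}(\Omega)}$ together with the identification $(H^{-r_\alpha}(\Omega))^*=H^{r_\alpha}_0(\Omega)\subset H^{r_\alpha}(\Omega)$, valid on a bounded Lipschitz domain for $r_\alpha\geq 0$ with $r_\alpha\notin\{\frac12,\frac32,\dots\}$; only after that does it extend the coefficient and apply Kato--Ponce to control $\|a_\alpha w\|_{H^{r_\alpha}(\Omega)}$ by $\|J^{r_\alpha}(A_\alpha w)\|_{L^2(\R^n)}$. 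This is precisely where the Lipschitz hypothesis and the $\delta$-shift in \eqref{r-alpha-conditions} do their work --- in your write-up they play no role, which is the tell-tale sign of the gap. (A minor side remark: your stated Kato--Ponce exponents $p_1=q_2=\infty$, $q_1=p_2=2$ would produce the term $\|\tilde a_\alpha\|_{L^2}\|J^{r_\alpha}w\|_{L^\infty}$ rather than $\|\tilde a_\alpha\|_{L^\infty}\|w\|_{H^{r_\alpha}}$; you want $p_1=p_2=\infty$ and $q_1=q_2=2$, as in the displayed inequality.)
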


\begin{remark}\label{r-alpha-remark}
\noindent Since $s \in  \mathbb R^+ \setminus \mathbb Z$ and $|\alpha| \leq m < 2s$, we also have that $\max(0, |\alpha|-s) \leq r_\alpha < s$ for $\delta > 0$ small (see formula \eqref{r-alpha-conditions}).
\end{remark}

\begin{proof}[Proof of lemma \ref{boundedness-bilinear-bounded}]
We only prove the boundedness of $B_P$, as for $B_P^*$ one can proceed in the same way. If $v,w \in C^\infty_c(\mathbb R^n)$, then
\begin{align}\label{estimate-PDO-1-bounded}
|\langle a_\alpha(x) D^\alpha v, w \rangle| & = \left|\int_\Omega a_\alpha w (D^\alpha v) \, dx\right| \leq \|a_\alpha w\|_{(H^{-r_\alpha}(\Omega))^*}\|D^\alpha v\|_{H^{-r_\alpha}(\Omega)}.
\end{align}
Since $\Omega$ is a Lipschitz domain and $r_\alpha \geq 0$, $r_\alpha \not\in \left\{\frac{1}{2}, \frac{3}{2}, \frac{5}{2} ...\right\}$, we have $(H^{-r_\alpha}(\Omega))^*= H^{r_\alpha}_0(\Omega)\subset H^{r_\alpha}(\Omega)$. Therefore
\begin{align}\label{estimate-PDO-1.5-bounded}
|\langle a_\alpha(x) D^\alpha v, w \rangle| & \leq C \|a_\alpha w\|_{ H^{r_\alpha}(\Omega)}\|D^\alpha v\|_{H^{-r_\alpha}(\Omega)} \leq C \|A_\alpha w\|_{ H^{r_\alpha}(\mathbb R^n)}\|D^\alpha v\|_{H^{-r_\alpha}(\Omega)} \\ 
& \leq C \|J^{r_\alpha}(A_\alpha w)\|_{L^2(\mathbb R^n)} \|v\|_{H^{|\alpha|-r_\alpha}(\Omega)}
\end{align}

\noindent where $J= (\id-\Delta)^{1/2}$ is the Bessel potential and $A_\alpha$ is an extension of $a_\alpha$ from $\Omega$ to $\mathbb R^n$ such that $A_\alpha|_\Omega = a_\alpha$ and $\|A_\alpha\|_{H^{r_\alpha,\infty}(\mathbb R^n)} \leq 2 \|a_\alpha\|_{H^{r_\alpha,\infty}(\Omega)}$. Since $r_\alpha \geq 0$, we may estimate the last term of \eqref{estimate-PDO-1.5-bounded} by the Kato-Ponce inequality given in lemma \ref{lemma:katoponce}
\begin{align}
\|J^{r_\alpha}(A_\alpha w)\|_{L^2(\mathbb R^n)} & \leq C \left( \|A_\alpha\|_{L^\infty(\mathbb R^n)} \|J^{r_\alpha} w\|_{L^2(\mathbb R^n)} + \|J^{r_\alpha} A_\alpha\|_{L^\infty(\mathbb R^n)} \|w\|_{L^2(\mathbb R^n)} \right) \\
& \leq C \|A_\alpha\|_{H^{r_\alpha,\infty}(\mathbb R^n)} \|w\|_{H^{r_\alpha}(\mathbb R^n)} \leq C \|a_\alpha\|_{H^{r_\alpha,\infty}(\Omega)} \|w\|_{H^{r_\alpha}(\mathbb R^n)}.
\end{align}

\noindent Substituting this into \eqref{estimate-PDO-1.5-bounded} gives
\begin{align}\label{estimate-PDO-2}
|\langle a_\alpha(x) D^\alpha v, w \rangle| &  \leq C  \|a_\alpha\|_{H^{r_\alpha,\infty}(\Omega)} \|w\|_{H^{r_\alpha}(\mathbb R^n)} \|v\|_{H^{|\alpha|-r_\alpha}(\Omega)} \\ 
& \leq C  \|a_\alpha\|_{H^{r_\alpha,\infty}(\Omega)} \|w\|_{H^s(\mathbb R^n)} \|v\|_{H^s(\mathbb R^n)}
\end{align}
\noindent given that both $r_\alpha < s$ and $|\alpha| - r_\alpha \leq s$ hold by remark \ref{r-alpha-remark}. Eventually we obtain

\begin{align}
|B_P(v,w)| & \leq |\langle (-\Delta)^{s/2}v, (-\Delta)^{s/2}w \rangle | + \sum_{|\alpha|\leq m} |\langle a_\alpha D^\alpha v, w \rangle| \\
& \leq \|w\|_{H^s(\mathbb R^n)} \|v\|_{H^s(\mathbb R^n)} + \sum_{|\alpha|\leq m} C  \|a_\alpha\|_{H^{r_\alpha,\infty}(\Omega)} \|w\|_{H^s(\mathbb R^n)} \|v\|_{H^s(\mathbb R^n)} \\ & 
\leq C \|w\|_{H^s(\mathbb R^n)} \|v\|_{H^s(\mathbb R^n)}.\qedhere
\end{align}
\end{proof}

Next we shall prove existence and uniqueness of solutions for the problems \eqref{problem} and \eqref{adjoint-problem}. The reasoning is similar to the one for the proof of lemma \ref{well-posedness}, but the details of the computations are quite different.  

\begin{lemma}[Well-posedness]\label{well-posedness-bounded}
Let $\Omega \subset \mathbb R^n$ be a bounded Lipschitz domain and $s \in \mathbb R^+ \setminus \mathbb Z$, $m\in \mathbb N$ such that $2s > m$. Let $a_{\alpha} \in H^{r_\alpha,\infty}(\Omega)$, with $r_\alpha$ defined as in \eqref{r-alpha-conditions}. There exist a real number $\mu > 0$ and a countable set $\Sigma \subset (-\mu, \infty)$ of eigenvalues $\lambda_1 \leq \lambda_2 \leq ... \rightarrow \infty$ such that if $\lambda \in \mathbb R \setminus \Sigma$, for any $f\in H^s(\mathbb R^n)$ and $F\in (\widetilde{H}
^s(\Omega))^*$ there exists a unique $u\in H^s(\mathbb R^n)$ such that $u-f \in \widetilde H^s(\Omega)$ and
$$ B_P(u,v)-\lambda \langle u,v \rangle = F(v)\quad \mbox{for all} \quad v \in \widetilde H^s(\Omega).$$
One has the estimate 
$$ \|u\|_{H^s(\mathbb R^n)} \leq C\left( \|f\|_{H^s(\mathbb R^n)} + \|F\|_{(\widetilde{H}^s(\Omega))^*} \right). $$
The function $u$ is also the unique $u\in H^s(\mathbb R^n)$ satisfying $$r_\Omega \left( (-\Delta)^s + \sum_{|\alpha|\leq m} a_\alpha(x) D^\alpha  -\lambda \right)u=F$$ in the sense of distributions in $\Omega$ and $u-f \in \widetilde H^s(\Omega)$. Moreover, if \eqref{eigenvalue-condition-1} holds then $0\notin \Sigma$.
\end{lemma}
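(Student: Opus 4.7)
The plan is to follow the same three-step structure as the proof of lemma \ref{well-posedness}: first reduce to finding $\tilde u \in \widetilde H^s(\Omega)$ solving the corresponding inhomogeneous problem with zero exterior data, then establish a G\r{a}rding-type coercivity estimate of the form
\begin{equation}
B_P(v,v) \geq c_0 \|v\|_{H^s(\R^n)}^2 - \mu \|v\|_{L^2(\R^n)}^2 \quad \text{for all } v \in \widetilde H^s(\Omega),
\end{equation}
and finally invoke Lax--Milgram for the shifted bilinear form $B_P(\cdot,\cdot) + \mu \langle\cdot,\cdot\rangle_{L^2}$, followed by the spectral theorem and Fredholm alternative exactly as in the singular case. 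The reduction step and the Lax--Milgram/spectral part are essentially identical to the singular setting (using lemma \ref{boundedness-bilinear-bounded} in place of lemma \ref{boundedness-bilinear}), so the genuine work is establishing the coercivity estimate from the bounds available for bounded coefficients.

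To prove coercivity, I would take $v=w \in C_c^\infty(\mathbb R^n)$ and revisit the estimate \eqref{estimate-PDO-2} from the proof of lemma \ref{boundedness-bilinear-bounded}, which yields
\begin{equation}
|\langle a_\alpha D^\alpha v, v \rangle| \leq C\|a_\alpha\|_{H^{r_\alpha,\infty}(\Omega)} \|v\|_{H^{r_\alpha}(\R^n)}\|v\|_{H^{|\alpha|-r_\alpha}(\R^n)}.
\end{equation}
By remark \ref{r-alpha-remark} both exponents $r_\alpha$ and $|\alpha|-r_\alpha$ lie in $[0,s]$, so the standard Sobolev interpolation inequality gives
\begin{equation}
\|v\|_{H^{r_\alpha}(\R^n)}\|v\|_{H^{|\alpha|-r_\alpha}(\R^n)} \leq C\|v\|_{L^2(\R^n)}^{2-|\alpha|/s}\|v\|_{H^s(\R^n)}^{|\alpha|/s}.
\end{equation}
Since $m < 2s$, the assumption $|\alpha|\leq m$ yields $|\alpha|/s < 2$, so Young's inequality \eqref{young-general} with $p=2s/|\alpha|$, $q=2s/(2s-|\alpha|)$ and parameter $\eta>0$ converts this into
\begin{equation}
|\langle a_\alpha D^\alpha v, v \rangle| \leq \eta \|v\|_{H^s(\R^n)}^2 + C_\eta \|v\|_{L^2(\R^n)}^2,
\end{equation}
uniformly in $\alpha$ (the case $|\alpha|=0$ reduces to $|\langle a_0 v, v\rangle| \leq C\|v\|_{L^2}^2$). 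Summing over $|\alpha|\leq m$, choosing $\eta$ sufficiently small so that the $H^s$ term can be absorbed into $\|(-\Delta)^{s/2}v\|_{L^2}^2$, and invoking the higher-order Poincar\'e inequality (lemma \ref{lemma:fractionalpoincareinequliaty}) to upgrade $\|(-\Delta)^{s/2}v\|_{L^2}^2$ to a multiple of $\|v\|_{H^s}^2$ on $\widetilde H^s(\Omega)$, one obtains the desired estimate with suitable $c_0,\mu>0$.

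Once coercivity is in hand, the rest of the argument is literally the same as in lemma \ref{well-posedness}: the Lax--Milgram theorem produces a bounded solution operator $G_\mu : (\widetilde H^s(\Omega))^* \to \widetilde H^s(\Omega)$ for the shifted problem, which by the compact Sobolev embedding $\widetilde H^s(\Omega) \hookrightarrow L^2(\Omega)$ induces a compact self-map on $L^2(\Omega)$; the spectral theorem for compact operators together with the Fredholm alternative then gives the claimed countable set $\Sigma$ of eigenvalues, unique solvability for $\lambda \notin \Sigma$, the corresponding norm estimate, and the identification with the distributional equation; the extra hypothesis \eqref{eigenvalue-condition-1} just ensures that $0 \notin \Sigma$. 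The main technical obstacle compared to the singular case is that one cannot any longer perturb $a_\alpha$ by an approximating smooth function with small multiplier norm, so the whole coercivity must be extracted from the fixed bound $\|a_\alpha\|_{H^{r_\alpha,\infty}}$ through the interpolation--Young trick, which is why the carefully tuned definition \eqref{r-alpha-conditions} of $r_\alpha$ is essential.
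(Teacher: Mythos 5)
Your proposal is correct and follows essentially the same route as the paper's proof: reduction to zero exterior data, a coercivity estimate extracted from \eqref{estimate-PDO-2} via Sobolev interpolation and Young's inequality \eqref{young-general}, the higher order Poincar\'e inequality, and then Lax--Milgram together with the compact embedding and the Fredholm alternative. The only cosmetic difference is that you interpolate both factors $\|v\|_{H^{r_\alpha}}$ and $\|v\|_{H^{|\alpha|-r_\alpha}}$ between $L^2$ and $H^s$ simultaneously and apply Young once, whereas the paper bounds $\|v\|_{H^{|\alpha|-r_\alpha}}\leq \|v\|_{H^s}$, interpolates only the other factor, and applies Young twice (and note that in \eqref{young-general} the small parameter $\eta$ sits on the $b^q$ term, so your roles of $p$ and $q$ should be swapped so that $\eta$ multiplies the $H^s$ contribution).
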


\begin{proof}
Again it is enough to find unique $\tilde u \in \widetilde H^s(\Omega)$ such that $B_P(\tilde u, v) - \lambda \langle \tilde u,v \rangle = \tilde F(v)$, where $\tilde F := F - B_P(f,\cdot) + \lambda\langle f,\cdot\rangle$. Consider $v,w \in C^\infty_c(\Omega)$ and $r_\alpha \neq 0$. Since $0 < r_\alpha< s$, the interpolation inequality $$ \|w\|_{H^{r_\alpha}(\mathbb R^n)} \leq C \|w\|_{L^2(\mathbb R^n)}^{1-r_{\alpha}/s} \|w\|_{H^s(\mathbb R^n)}^{r_\alpha/s} $$ holds. Using this and formula \eqref{estimate-PDO-2} we get, for a constant $C=C(\Omega,n,s,r_\alpha)$ which may change from line to line,
\begin{align}\label{WP-1}
|\langle a_\alpha(x) D^\alpha v, w \rangle| &  \leq C  \|a_\alpha\|_{H^{r_\alpha,\infty}(\Omega)}\|v\|_{H^s(\mathbb R^n)}  \|w\|_{H^{r_\alpha}(\mathbb R^n)}  \\ 
& \leq C  \|a_\alpha\|_{H^{r_\alpha,\infty}(\Omega)} \|v\|_{H^s(\mathbb R^n)} \|w\|_{L^2(\mathbb R^n)}^{1-r_{\alpha}/s} \|w\|_{H^s(\mathbb R^n)}^{r_\alpha/s}  \\
& \leq \|a_\alpha\|_{H^{r_\alpha,\infty}(\Omega)} \|v\|_{H^s(\mathbb R^n)} \left(  C \epsilon^{r_\alpha/(r_\alpha-s)} \|w\|_{L^2(\mathbb R^n)} + \epsilon \|w\|_{H^s(\mathbb R^n)} \right).
\end{align}

\noindent In the last step of \eqref{WP-1} we used formula \eqref{young-general} with 
$$q=\frac{s}{r_\alpha}, \quad p=\frac{s}{s-r_\alpha}, \quad b= \|w\|_{H^s(\mathbb R^n)}^{r_\alpha/s}, \quad a= C\|w\|_{L^2(\mathbb R^n)}^{1-r_\alpha/s}, \quad \eta = \epsilon.$$ 

If instead $r_\alpha =0$, just by formula  \eqref{estimate-PDO-2} we already have $$ |\langle a_\alpha(x) D^\alpha v, w \rangle| \leq C  \|a_\alpha\|_{L^{\infty}(\Omega)}\|v\|_{H^s(\mathbb R^n)}  \|w\|_{L^2(\mathbb R^n)} .$$

Moreover, the two estimates above also hold for $v,w\in \widetilde H^s(\Omega)$ by the density of $C^\infty_c(\Omega)$ in $\widetilde H^s(\Omega)$. Now we use formula \eqref{young-general} again, but this time we choose
$$q=p=2, \quad b= \|v\|_{H^s(\mathbb R^n)}, \quad a= \|v\|_{L^2(\mathbb R^n)}, \quad \eta = \epsilon^{s/(s-r_\alpha)}.$$ 
This leads to
\begin{align}\label{WP-2}
    | \langle a_\alpha(x) D^\alpha v, v \rangle | & \leq \|a_\alpha\|_{H^{r_\alpha,\infty}(\Omega)} \|v\|_{H^s(\mathbb R^n)} \left(  C \epsilon^{r_\alpha/(r_\alpha-s)} \|v\|_{L^2(\mathbb R^n)} + \epsilon \|v\|_{H^s(\mathbb R^n)} \right) \\
    & = \|a_\alpha\|_{H^{r_\alpha,\infty}(\Omega)} \left( C \epsilon^{r_\alpha/(r_\alpha-s)} \|v\|_{L^2(\mathbb R^n)}\|v\|_{H^s(\mathbb R^n)} + \epsilon \|v\|_{H^s(\mathbb R^n)}^2 \right) \\ 
    & \leq \|a_\alpha\|_{H^{r_\alpha,\infty}(\Omega)} \left( C \epsilon^{\frac{r_\alpha+s}{r_\alpha-s}} \|v\|_{L^2(\mathbb R^n)}^2 + \epsilon (C+1) \|v\|_{H^s(\mathbb R^n)}^2 \right) \\
    & \leq C \|a_\alpha\|_{H^{r_\alpha,\infty}(\Omega)} \left( \epsilon^{\frac{r_\alpha+s}{r_\alpha-s}} \|v\|_{L^2(\mathbb R^n)}^2 + \epsilon \|v\|_{H^s(\mathbb R^n)}^2 \right) \\
    & \leq C' \|a_\alpha\|_{H^{r_\alpha,\infty}(\Omega)} \left( \epsilon^{\frac{M+s}{M-s}} \|v\|_{L^2(\mathbb R^n)}^2 + \epsilon \|v\|_{H^s(\mathbb R^n)}^2 \right) 
\end{align}
\noindent where $C=C(\Omega,n,s,r_\alpha)$ and $C'=C'(\Omega,n,s)$ are constants changing from line to line and $M\in[0,s)$ is defined by $M:= \max_{|\alpha|\leq m} r_\alpha$. Eventually
\begin{align}\label{coercivity-1}
B_P(v,v) & \geq \|(-\Delta)^{s/2}v\|^2_{L^2(\mathbb R^n)} - \sum_{|\alpha|\leq m} | \langle a_\alpha(x) D^\alpha v, v \rangle | \\ 
& \geq  \|(-\Delta)^{s/2}v\|^2_{L^2(\mathbb R^n)} -  C' \left( \epsilon^{\frac{M+s}{M-s}} \|v\|_{L^2(\mathbb R^n)}^2 + \epsilon \|v\|_{H^s(\mathbb R^n)}^2 \right)  \sum_{|\alpha|\leq m}  \|a_\alpha\|_{H^{r_\alpha,\infty}(\Omega)} \\
& = \|(-\Delta)^{s/2}v\|^2_{L^2(\mathbb R^n)} - C' C'' \left( \epsilon^{\frac{M+s}{M-s}} \|v\|_{L^2(\mathbb R^n)}^2 + \epsilon \|v\|_{H^s(\mathbb R^n)}^2 \right)
\end{align}
\noindent where $C'' := \sum_{|\alpha|\leq m} \|a_\alpha\|_{H^{r_\alpha,\infty}(\Omega)}$ is a constant independent of $\epsilon$ and $v$. By the higher order Poincar\'e inequality (lemma \ref{lemma:fractionalpoincareinequliaty}) \eqref{coercivity-1} turns into
\begin{align}
B_P(v,v) & \geq c \left(\|(-\Delta)^{s/2}v\|^2_{L^2(\mathbb R^n)}+ \|v\|_{L^2(\mathbb R^n)}^2\right)  - C' C'' \left( \epsilon^{\frac{M+s}{M-s}} \|v\|_{L^2(\mathbb R^n)}^2 + \epsilon \|v\|_{H^s(\mathbb R^n)}^2 \right) \\ 
& \geq c \|v\|_{H^s(\mathbb R^n)}^2 - C' C'' \left( \epsilon^{\frac{M+s}{M-s}} \|v\|_{L^2(\mathbb R^n)}^2 + \epsilon \|v\|_{H^s(\mathbb R^n)}^2 \right) 
\end{align}
\noindent for some constant $c=c(\Omega,n,s)$ changing from line to line. For $\epsilon$ small enough (notice that $M-s < 0$), this eventually gives the coercivity estimate
\begin{equation}\label{coercivity-3}
B_P(v,v) \geq c_0 \|v\|_{H^s(\mathbb R^n)}^2 - \mu\|v\|_{L^2(\mathbb R^n)}^2
\end{equation}
\noindent for some constants $c_0, \mu > 0$ independent of $v$. The proof is now concluded as in lemma \ref{well-posedness}.
\end{proof}

Assuming as in Section \ref{section-main-sigular} that both \eqref{eigenvalue-condition-1} and \eqref{eigenvalue-condition-2} hold, by means of the above lemma \ref{well-posedness-bounded} we can define the DN-maps $\Lambda_P, \Lambda_P^*$ just as in lemma \ref{lemma-DN-maps}.

\begin{definition}
Let $\Omega \subset \mathbb R^n$ be a bounded open set. Let $s \in \mathbb R^+ \setminus \mathbb Z$ and $m\in \mathbb N$ such that $2s> m$, and let $a_{\alpha} \in H^{r_\alpha,\infty}(\Omega)$, with $r_\alpha$ defined as in \eqref{r-alpha-conditions}. The exterior DN maps $\Lambda_P$ and $\Lambda_P^*$ are  $$ \Lambda_P : X\rightarrow X^* \quad \mbox{defined by}\quad \langle \Lambda_P[f],[g] \rangle := B_P(u_f, g)$$
and 
$$ \Lambda^*_P : X\rightarrow X^* \quad \mbox{defined by}\quad \langle \Lambda^*_P[f],[g] \rangle := B^*_P(u^*_f, g)$$
where $u_f, u^*_f$ are the unique solutions to the equations
 \begin{align*}
(-\Delta)^s u + \sum_{|\alpha|\leq m} a_\alpha D^\alpha u & = 0 \quad \mbox{in} \;\;\Omega,\quad  u - f \in \widetilde H^s(\Omega)  
\end{align*}
and 
\begin{align*}
(-\Delta)^s u^* + \sum_{|\alpha|\leq m} (-1)^{|\alpha|} D^\alpha (a_\alpha  u^*) & = 0 \quad \mbox{in} \;\;\Omega, \quad u^* - f  \in \widetilde H^s(\Omega)
\end{align*}
with $f,g\in H^s(\mathbb R^n)$.
\end{definition}

\subsection{Proof of injectivity}\label{subsec-injectivity-bounded}

 We also arrive at the same Alessandrini identity and Runge approximation property which we get in lemmas \ref{alex} and \ref{runge}.

\begin{lemma}[Alessandrini identity]\label{alex-bounded}
Let $\Omega \subset \mathbb R^n$ be a bounded Lipschitz domain and $s \in \mathbb R^+ \setminus \mathbb Z$, $m\in \mathbb N$ such that $2s > m$. Let $a_{\alpha} \in H^{r_\alpha,\infty}(\Omega)$, with $r_\alpha$ defined as in \eqref{r-alpha-conditions}. For any $f_1, f_2 \in H^s(\mathbb R^n)$, let $u_1,u_2^* \in H^s(\mathbb R^n)$ respectively solve
 \begin{align*}
(-\Delta)^s u_1 + \sum_{|\alpha|\leq m} a_{1,\alpha}(x) D^\alpha u_1 & = 0 \quad \mbox{in} \;\;\Omega,\quad  u_1 - f_1 \in \widetilde H^s(\Omega)  
\end{align*}
and 
\begin{align*}
(-\Delta)^s u_2^* + \sum_{|\alpha|\leq m} (-1)^{|\alpha|} D^\alpha (a_{2,\alpha}(x)  u_2^*) & = 0 \quad \mbox{in} \;\;\Omega, \quad u_2^* - f_2  \in \widetilde H^s(\Omega) .
\end{align*}
Then we have the integral identity
$$ \langle (\Lambda_{P_1} - \Lambda_{P_2})[f_1],[f_2] \rangle = \sum_{|\alpha|\leq m} \langle(a_{1,\alpha}-a_{2,\alpha})D^\alpha u_1, u_2^*\rangle. $$
\end{lemma}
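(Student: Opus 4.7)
The plan is to mimic exactly the structure of the earlier proof of lemma \ref{alex}, since the only real content beyond algebra is the symmetry $\langle \Lambda_P[f],[g] \rangle = \langle [f], \Lambda^*_P[g] \rangle$ and the pointwise identity $B_P(v,w) = B_P^*(w,v)$, both of which carry over verbatim once the correct well-posedness and boundedness of the bilinear forms are in place. Well-posedness is provided by lemma \ref{well-posedness-bounded} (so $u_1$ and $u_2^*$ genuinely exist and are unique), and boundedness of $B_{P_1},B_{P_2}^*$ on $H^s(\mathbb R^n)\times H^s(\mathbb R^n)$ is provided by lemma \ref{boundedness-bilinear-bounded}.

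First I would establish the analogue of lemma \ref{lemma-DN-maps} in the bounded-coefficients setting: the well-definedness, continuity, and adjoint-type identity $\langle \Lambda_P[f],[g] \rangle = \langle [f], \Lambda_P^*[g] \rangle$. The proof is formally identical to the singular case because it relies only on the two-line observations that (a) if $v-w\in \widetilde H^s(\Omega)$ then $B_P(u_f,v)=B_P(u_f,w)$ whenever $u_f$ solves the forward problem (so the DN map is well-defined on equivalence classes), and (b) $B_P(u_f,u_g^*)=B_P^*(u_g^*,u_f)$, which follows by density from the elementary identity $B_P(v,w)=B_P^*(w,v)$ for test functions. Continuity follows by taking infima over representatives in the quotient norm on $X$ together with the bound from lemma \ref{boundedness-bilinear-bounded}.

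With this in hand, the identity of the lemma is a two-step computation:
\begin{align}
\langle (\Lambda_{P_1} - \Lambda_{P_2})[f_1],[f_2] \rangle
&= \langle \Lambda_{P_1}[f_1],[f_2] \rangle - \langle [f_1], \Lambda_{P_2}^*[f_2] \rangle \\
&= B_{P_1}(u_1,u_2^*) - B_{P_2}^*(u_2^*,u_1) \\
&= \sum_{|\alpha|\leq m} \bigl\langle (a_{1,\alpha}-a_{2,\alpha})D^\alpha u_1,\, u_2^* \bigr\rangle,
\end{align}
where in the second step we use the symmetry identity together with the fact that $u_2^*$ is an admissible exterior datum representing $[f_2]$ (and similarly $u_1$ for $[f_1]$), and in the last step the fractional Laplacian contributions cancel by the symmetry of $\langle (-\Delta)^{s/2}u_1,(-\Delta)^{s/2}u_2^*\rangle$, leaving only the differences of the local lower-order terms.

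I do not anticipate any real obstacle here, since the present framework has been arranged precisely so that this proof goes through word-for-word. The only small point to be careful about is that, with the coefficients only in $H^{r_\alpha,\infty}(\Omega)\subset L^\infty(\Omega)$, the duality pairing $\langle (a_{1,\alpha}-a_{2,\alpha})D^\alpha u_1, u_2^*\rangle$ must be interpreted through the $H^{r_\alpha}$–$H^{-r_\alpha}$ duality on $\Omega$ as in \eqref{estimate-PDO-1-bounded}–\eqref{estimate-PDO-2}; lemma \ref{boundedness-bilinear-bounded} guarantees that this pairing makes sense and is continuous on $H^s(\mathbb R^n)\times H^s(\mathbb R^n)$, so the final expression is meaningful.
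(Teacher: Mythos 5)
Your proposal is correct and follows essentially the same route as the paper: the paper proves lemma \ref{alex-bounded} by simply transporting the argument of lemma \ref{alex} (the adjoint identity $\langle \Lambda_{P}[f],[g]\rangle = \langle [f],\Lambda_{P}^*[g]\rangle$ from the DN-map lemma, then cancellation of the fractional Laplacian terms in $B_{P_1}(u_1,u_2^*)-B_{P_2}^*(u_2^*,u_1)$), with well-posedness and boundedness now supplied by lemmas \ref{well-posedness-bounded} and \ref{boundedness-bilinear-bounded}. Your remark about interpreting the pairing via the $H^{r_\alpha}$--$H^{-r_\alpha}$ duality as in \eqref{estimate-PDO-1-bounded} is exactly the only point where the bounded-coefficient case differs.
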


\begin{lemma}[Runge approximation property]\label{runge-bounded}
Let $\Omega, W \subset \mathbb R^n$ respectively be a bounded Lipschitz domain and a non-empty open set such that $\overline W \cap \overline \Omega = \emptyset$. Let $s \in \mathbb R^+ \setminus \mathbb Z$, $m\in \mathbb N$ such that $2s > m$. Let $a_{\alpha} \in H^{r_\alpha,\infty}(\Omega)$, with $r_\alpha$ defined as in \eqref{r-alpha-conditions}. Moreover, let $\mathcal{R} := \{\, u_f - f: f\in C^\infty_c(W) \,\} \subset \widetilde H^s(\Omega)$, where $u_f$ solves
\begin{align*}
(-\Delta)^s u_f + \sum_{|\alpha|\leq m} a_\alpha(x) D^\alpha u_f & = 0 \quad \mbox{in} \;\;\Omega,\quad  u_f - f \in \widetilde H^s(\Omega)  
\end{align*}
and $\mathcal{R}^* := \{\, u^*_f - f: f\in C^\infty_c(W) \,\} \subset \widetilde H^s(\Omega)$, where $u^*_f$ solves
\begin{align*}
(-\Delta)^s u_f^* + \sum_{|\alpha|\leq m} (-1)^{|\alpha|} D^\alpha (a_\alpha(x)  u_f^*) & = 0 \quad \mbox{in} \;\;\Omega, \quad u_f^* - f  \in \widetilde H^s(\Omega) .
\end{align*}
Then $\mathcal R$ and $\mathcal R^*$ are dense in $\widetilde H^s(\Omega)$.
\end{lemma}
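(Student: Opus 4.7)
The plan is to repeat essentially verbatim the Hahn--Banach plus UCP argument used in Lemma \ref{runge}, replacing the appeal to Lemma \ref{well-posedness} with the bounded-coefficient well-posedness result Lemma \ref{well-posedness-bounded}, and the boundedness Lemma \ref{boundedness-bilinear} with Lemma \ref{boundedness-bilinear-bounded}. The structural ingredients needed (well-posedness of both the direct and adjoint problems, the symmetry $B_P(v,w)=B_P^*(w,v)$, and the UCP for $\fraclaplace$ from Lemma \ref{lemma:ucpoffractionallaplacian}) are already available in the bounded-coefficient setting, so no genuinely new idea is required.

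In detail, I will treat only $\mathcal{R}$, the argument for $\mathcal{R}^*$ being symmetric. By the Hahn--Banach theorem it suffices to show that any $F\in (\widetilde H^s(\Omega))^*$ vanishing on $\mathcal{R}$ is identically zero. Given such an $F$, Lemma \ref{well-posedness-bounded} applied to the adjoint problem with source $-F$ and zero exterior data produces a unique $\phi\in \widetilde H^s(\Omega)$ such that $B_P^*(\phi,w)=-F(w)$ for every $w\in \widetilde H^s(\Omega)$. For each $f\in C_c^\infty(W)$, writing $w=u_f-f\in \widetilde H^s(\Omega)$ gives
\begin{equation*}
0=F(u_f-f)=-B_P^*(\phi,u_f-f)=-B_P^*(\phi,u_f)+B_P^*(\phi,f).
\end{equation*}
Using the symmetry identity $B_P^*(\phi,u_f)=B_P(u_f,\phi)$ together with the fact that $\phi\in \widetilde H^s(\Omega)$ is an admissible test function for the equation solved by $u_f$, one gets $B_P(u_f,\phi)=0$, leaving $0=B_P^*(\phi,f)$.

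Now I would exploit the support condition $\overline W\cap\overline\Omega=\emptyset$: since $\phi$ is supported in $\overline\Omega$ and each $D^\alpha f$ is supported in $\overline W$, the pointwise product $\phi\,D^\alpha f$ vanishes, so every lower order term in $B_P^*(\phi,f)$ drops out. What remains is
\begin{equation*}
0=B_P^*(\phi,f)=\langle (-\Delta)^{s/2}\phi,(-\Delta)^{s/2}f\rangle=\langle f,\fraclaplace\phi\rangle,
\end{equation*}
valid for all $f\in C_c^\infty(W)$. Hence $\fraclaplace\phi=0$ in $W$, and clearly $\phi=0$ in $W$, so by the higher order UCP of Lemma \ref{lemma:ucpoffractionallaplacian} one concludes $\phi\equiv 0$ on $\R^n$. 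This forces $F\equiv 0$ on $\widetilde H^s(\Omega)$, completing the argument.

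The only place where the bounded-coefficient hypothesis enters nontrivially is in guaranteeing, via Lemma \ref{well-posedness-bounded}, the existence of the auxiliary solution $\phi$ of the adjoint problem; this in turn is where the coercivity estimate of the previous subsection (built on the interpolation trick tuned to $r_\alpha\in[0,s)$) is implicitly used. Thus the ``main obstacle'' is not in this lemma itself but has already been overcome in Lemma \ref{well-posedness-bounded}: once well-posedness and the $B_P\leftrightarrow B_P^*$ duality are in place, the Runge density reduces to the same UCP-based Hahn--Banach scheme as in the singular case.
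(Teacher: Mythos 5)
Your proposal is correct and follows exactly the route the paper intends: the paper does not even write out a separate proof of Lemma \ref{runge-bounded}, but simply notes that the Hahn--Banach plus UCP argument of Lemma \ref{runge} carries over once Lemma \ref{well-posedness-bounded} and Lemma \ref{boundedness-bilinear-bounded} replace their singular-coefficient counterparts, which is precisely what you do. The individual steps (existence of the auxiliary adjoint solution $\phi$, the identity $B_P^*(\phi,u_f)=B_P(u_f,\phi)=0$, the vanishing of the lower order terms by the support condition, and the application of Lemma \ref{lemma:ucpoffractionallaplacian}) all match the paper's argument.
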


With this at hand, we can prove the main theorem for bounded coefficients.

\begin{proof}[Proof of theorem \ref{main-theorem-bounded}]
  The proof is virtually identical to the one of theorem \ref{main-theorem-singular}, the unique difference being in the way the error terms of the Runge approximation are estimated. We make use of \eqref{estimate-PDO-2}, which relied on the Kato-Ponce inequality instead of multiplier space estimates. The proof is otherwise completed as the proof of theorem \ref{main-theorem-singular}.
\end{proof}

\bibliography{refs_02} 

\bibliographystyle{abbrv}

\end{document}